\theoremstyle{definition}
\theoremstyle{remark}
\numberwithin{equation}{section}
\newtheorem{tm}{Theorem}[section]
\newtheorem{rk}{Remark}[section]
\newtheorem{prop}{Proposition}[section]
\newtheorem{lm}{Lemma}[section]
\newtheorem{cor}{Corollary}[section]
\newtheorem{ex}{Example}[section]
\newcommand{\E}{\mathbb E}
\newcommand{\bi}{\mathbf i}
\newcommand{\<}{\langle}
\renewcommand{\>}{\rangle}
\numberwithin{figure}{section}
\newcommand{\zhou}[1]{{\color{red} [zhou: #1]}}
\newcommand{\cui}[1]{{\color{blue} [cui: #1]}}
\begin{document}

\title[Stochastic Wasserstein Hamiltonian Flows]
{Stochastic Wasserstein Hamiltonian Flows}

\author{Jianbo Cui}
\address{Department of Applied Mathematics, The Hong Kong Polytechnic University, Hung Hom, Kowloon, Hong Kong}
\curraddr{}
\email{jianbo.cui@polyu.edu.hk}
\thanks{The research is partially supported by Georgia Tech Mathematics Application Portal (GT-MAP) and by research grants NSF  DMS-1830225, and ONR N00014-21-1-2891. The research of the first author is
partially supported by start-up funds from Hong Kong Polytechnic University and the CAS AMSS.PolyU Joint Laboratory of Applied Mathematics.}
\author{Shu Liu}
\address{School of Mathematics, Georgia Tech, Atlanta, GA 30332, USA}
\curraddr{}
\email{sliu459@gatech.edu}
\thanks{}
\author{Haomin Zhou}
\address{School of Mathematics, Georgia Tech, Atlanta, GA 30332, USA}
\curraddr{}
\email{hmzhou@math.gatech.edu}
\thanks{}

\subjclass[2010]{Primary 58B20, Secondary 35R60,35Q41,35Q83,65M75}

\keywords{Stochastic Hamiltonian flow; Wong--Zakai approximation; density manifold. }


\dedicatory{}

\begin{abstract}
In this paper, we study the stochastic Hamiltonian flow in Wasserstein manifold, the probability density space equipped with $L^2$-Wasserstein metric tensor, via the Wong--Zakai approximation. We begin our investigation by showing that the stochastic Euler-Lagrange equation, regardless it is deduced from either variational principle or particle dynamics, can be interpreted as the stochastic kinetic Hamiltonian flows in Wasserstein manifold. We further propose a novel variational formulation to derive more general stochastic Wassersetin Hamiltonian flows, and demonstrate that this new formulation is applicable to various systems including the stochastic Schr\"odinger equation, Schr\"odinger equation with random dispersion, and Schr\"odinger bridge problem with common noise.   
\end{abstract}


\maketitle

\section{Introduction}

The density space equipped with $L^2$-Wasserstein metric forms an infinite dimensional Riemannain manifold, often called Wasserstein manifold or density manifold in literature (see e.g. \cite{MR924776}). It plays an important role in optimal transport theory \cite{Vil09}.  Many well-known equations, such as Schr\"odinger equation, Schr\"odinger bridge problem and Vlasov equation, can be written as Hamiltonian systems on the density manifold. In this sense, they can be considered as members of the so-called Wasserstein Hamiltonian flows
(\cite{Vil09,MR2361303,MR2808856,CP17,CLZ19,CLZ20,CLZ21}). The study of Wasserstein Hamiltonian flow can be traced back to Nelson's mechanics (\cite{Nelson19661079,MR0343816,MR783254,MR870196}). 
Recently, it is shown in \cite{CLZ20} that the kinetic Hamiltonian flows in density space are probability transition equations of classical Hamiltonian ordinary differential equations (ODEs). In other words, this reveals that the density of a Hamiltonian flow in sample space is a Hamiltonian flow on density manifold. 

In the existing works on Wasserstein Hamiltonian flows, random perturbations to the Lagrangian functional are not considered. Consequently, the theory is neither directly applicable to the Wasserstein Hamiltonian flows subjected to random perturbations, nor to the systems whose parameters are not given deterministically. The main goal of this article is developing a theory to cover these scenarios in which the stochasticity is presented. More precisely,  we mainly focus on the stochastic perturbation of the Wasserstein Hamiltonian flow,
\begin{align*}
d\rho_t &=\frac {\delta} {\delta S_t} \mathcal H_0(\rho_t,S_t)dt,\\
dS_t&=-\frac {\delta} {\delta \rho_t} \mathcal H_0(\rho_t,S_t)dt,
\end{align*}
with a Hamiltonian $\mathcal H_0$ on the density manifold and $\frac {\delta}{\delta S},\frac {\delta}{\delta \rho} $ being the variational derivatives,  
which is proposed by only imposing randomness on the initial position of the  phase space \cite{CLZ20}.
This is different from the Hamiltonian flows considered in \cite{MR2361303}, where
the authors consider and construct the solutions of the ODEs in the measure space of even dimensional phase variables equipped with the Wasserstein metric.

To study the stochastic variational principle on density manifold, we may  confront several challenges. First and the foremost, the Wasserstein Hamiltonian flow studied in \cite{CLZ20} is induced based on the principle that the density of a Hamiltonian flow in sample space is a Wasserstein Hamiltonian flow in density manifold. This principle may no long hold if the Hamiltonian flow in sample space is perturbed by random noise. Second, the stochastic variational framework must be carefully designed in order to induce stochastic dynamics that possess Hamiltonian structures on Wasserstein manifold. Last by not the least, it is not clear at all that how to introduce the Christoffel symbol, a tool that plays the vital role in the typical kinetic dynamics, in the noise perturbed Wasserstein Hamiltonian flows on the density manifold.

To overcome the difficulties, we begin our study by investigating the classical Lagrangian functional perturbed by the Wong--Zakai approximation (see e.g. \cite{MR195142,MR0400425}) on phase space, and show that the critical point of the new Lagrangian functional is convergent to the known stochastic Hamiltonian flow driven by Wiener process. We further prove that the stochastic Wassertein Hamiltonian flow is the critical point of a stochastic variational principle (see e.g. \cite{MR2574753}). Meanwhile, the marco behaviors of this convergence indicates that the critical point of 
the marco Lagrangian functional corresponding to Wong--Zakai approximation is 
convergent to the stochastic Euler--Lagrange equation in density space.

Furthermore, a general variational principle is proposed to derive a large class of stochastic Hamiltonian equations on density manifold via Wong--Zakai approximation, such as stochastic nonlinear Schr\"odinger equation (see, e.g.,  \cite{PhysRevE.49.4627,PhysRevE.63.025601,MR1425880,UEDA1992166}), nonlinear Schr\"odinger equation with white noise dispersion (see, e.g., \cite{Agra01b,Agra01a}),  and the mean-field game system with common noise (see, e.g., \cite{MR3195844,MR3967062,MR3753660}). We would like to mention that although the Wong--Zakai approximation of stochastic differential equations has been studied for many years (see, e.g., \cite{MR195142,MR0400425,MR1313027,MR3712946}), few result is known for the convergence on the density manifold. In this work, we also provide some new convergence results of Wong--Zakai approximation for the continuity equation induced by stochastic Hamiltonian system and the stochastic Schr\"odinger equation on density space under suitable assumptions.

Another main message that we would like to convey in this paper is that the stochastic Hamiltonian flow on phase space, when viewed through the lens of conditional probability, induces the stochastic Wasserstein Hamiltonian flow on density manifold, and it is hard to observe those stochastic Hamiltonian structures
in the density manifold without the help of conditional probability.

The  organization of this article is as follows. In section 2, we review the formulation and derivation of Hamiltonian ODE, and use the Wong--Zakai approximation of the Lagrangian functional  to connect the classic and  stochastic variational principles on phase space. In section 3, we study the macro behaviors of  stochastic Hamiltonian ODE and its Wong--Zakai approximation, including the stochastic Euler--Lagrange equation on density space, Vlasov equation, as well as the generalized stochastic Wasserstein Hamiltonian flow. Several examples are demonstrated in section 4. Throughout this paper, we denote $C$ and $c$ as positive constants which may differ from line to line.

\section{Stochastic Hamiltonian ODEs}

In this section, we briefly review the classical and stochastic Hamiltonian flows on a finite dimensional Riemannian manifold.

The classical Hamiltonian flow on a smooth $d$-dimensional Riemannian manifold $(\mathcal M,g)$ with $g$ being the metric tensor of $\mathcal M,$ is derived by the variational problem 
$$I(x_0,x_T)=\inf_{(x(t))_{t\in [0,T]}}\{ \int_0^TL_0(x,\dot x)dt: x(0)=x_0,\; x(T)=x_T\}.$$
Here the Lagrangian $L_0$ is a functional (also called Lagrange action functional) defined on the tangent bundle of $\mathcal M$.
Its critical point induces the Euler-Lagrange equation
\begin{align*}
\frac {d}{dt} \frac {d}{d\dot x} L_0(x,\dot x)= \frac {d}{dx} L_0(x,\dot x).
\end{align*}
When $L_0(x,\dot x)=\frac 12 {\dot x}^{\top}g(x){\dot x}-f(x)$  with a smooth potential functional $f$ on $\mathcal M$, the Euler-Lagrange equation can be rewritten as a Hamiltonian system,
\begin{align*}
\dot x=g(x)^{-1}p,\;
\dot p=-\frac 12 p^{\top} d_x g^{-1}(x) p -d_x f(x)
\end{align*}
Here $\top$ denotes the transpose, 
$p=g(x)\dot x$ and the Hamiltonian is $$H_0(x,p)=\frac 12p^{\top}g^{-1}(x)p+f(x).$$
However, the Lagrange action functional $L_0(x,\dot x)$ may not be homogeneous or it can by impacted by random perturbations in some problems, which is the reason to introduce stochastic Hamiltonian flows. 

Let us start with the case that $L(x,\dot x)$ is composed by the deterministic Lagrange functional $L_0(x,\dot x)$ and a random perturbation $\eta \sigma (x)\dot \xi_{\delta}(t)$. Here $\xi_{\delta}$ can be chosen as a piecewise continuous differentiable function which obeys certain distribution law in a complete probability space $(\Omega,\mathbb F,\mathbb P)$ with a filtration $\{\mathbb F_t\}_{t\ge 0}$, 
$\sigma(x)$ is a potential function and $\eta\in\mathbb R$ characterizes the noise intensity.  In this paper, $\xi_{\delta}$ is taken as a Wong-Zakai approximation (see e.g. \cite{MR195142}) of the standard Brownian motion such that $\dot \xi_{\delta }$ is a real function. When $\delta \to 0,$ $\xi_{\delta}(t)$ is convergent to the Brownian motion $B(t)$ in pathwise sense or strong sense. For fixed $\omega\in \Omega, $ since $\xi_{\delta}(t)$ is a stochastic process on $(\Omega,\mathbb F,\mathbb P)$ with piecewise continuous trajectory, the value of the action functional $\int_0^TL_0(x,\dot x)-\eta\sigma (x)\dot \xi_{\delta}(t)dt$ is finite for any given $x(0)=x_0,x(T)=x_T.$

Newton's law can be used to derive the Euler--Lagrange equation or the Hamiltonian system in the stochastic case. In order to find out the critical point of $\int_0^TL_0(x,\dot x)-\eta\sigma (x)\dot \xi_{\delta}(t)dt,$ we calculate its  G\^{a}teaux derivative (see, e.g., \cite{MR0423094}). 
Set $x_{\epsilon}(t)=x(t)+\epsilon h(t),$ $h(0)=h(T)=0,$ the Newton's law indicates the critical point satisfies 
\begin{align*}
\frac {d}{dt} \frac {\partial}{\partial \dot x} L(x,\dot x)=\frac {\partial }{\partial x} L(x,\dot x)= \frac {\partial }{\partial x} L_0(x,\dot x)-\eta \frac {\partial }{\partial x}  \sigma(x) \dot  \xi_{\delta t},
\end{align*}
which is equivalent to the integral equation 
\begin{align*}
 \frac {\partial}{\partial \dot x} L(x(t),\dot x(t))-  \frac {\partial}{\partial \dot x} L(x(0),\dot x(0))=\int_0^t\frac {\partial }{\partial x} L_0(x,\dot x)ds-\eta \int_{0}^t \frac {\partial }{\partial x}  \sigma(x) d \xi_{\delta t}.
\end{align*}
One can also introduce the Legendre transformation $p=g(x)\dot x,$, and get
\begin{align}\label{inhs}
\dot x=g(x)^{-1}p,\;
\dot p=-\frac 12 p^{\top} d_x g^{-1}(x) p-d_x f(x)-\eta d_x \sigma(x) \dot \xi_{\delta }.
\end{align}
Since it can be rewritten as
\begin{align*}
\dot x= \frac {\partial }{\partial p}H_0(x,p)+\frac {\partial }{\partial p}H_1(x,p) \dot \xi_{\delta}, \; 
\dot p= -\frac {\partial }{\partial x}H_0(x,p)-\frac {\partial }{\partial x}H_1(x,p)\dot \xi_{\delta},
\end{align*}
where $H_1(x,p)=\sigma(x)$, the equations form a stochastic Hamiltonian system.

\begin{rk}
When $\dot \xi_{\delta }$ is a constant, the Hamilton's principle gives a Hamiltonian system with a homogenous perturbation. 
Otherwise, for a fixed $\omega,$ the Hamilton's principle leads to a Hamiltonian system with an inhomogenous perturbation. 
\end{rk}

\subsection{Wong--Zakai approximation in $\mathcal M=\mathbb R^d$}

In this part, we show that the limit of the Wong-Zakai approximation \eqref{inhs} is a stochastic Hamiltonain system.

\begin{lm}\label{bound-case}
Let $\mathcal M=\mathbb R^d$ and $T>0,$ $g$ be the identity matrix $\mathbb I_{d\times d}$.
Assume that $f,\sigma\in \mathcal C_b^2(\mathcal M),$ $\xi_{\delta}$ is the linear interpolation of $B(t)$ with width $\delta$ and that $x_0,p_0$ is $\mathbb F_0$-apdated. Then \eqref{inhs} on $[0,T]$ is convergent to 
\begin{align}\label{lim-sode0}
d x=p,\;
d p=-d_xf(x)-\eta d_x \sigma(x) \circ d B(t),\; \text{ a.s.},
\end{align}
where $\circ$ denotes the stochastic integral in the Stratorovich sense.
\end{lm}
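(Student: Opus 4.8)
The plan is to reduce \eqref{inhs} with $g=\mathbb I_{d\times d}$ to a first-order random ODE in the phase variable and then invoke (and sketch the mechanism of) the classical Wong--Zakai limit theorem for a single driving Brownian motion. Writing $z=(x,p)^\top\in\mathbb R^{2d}$, the system \eqref{inhs} reads
\begin{align*}
\dot z = b(z)+\Sigma(z)\dot\xi_\delta,\qquad b(z)=\begin{pmatrix}p\\ -d_xf(x)\end{pmatrix},\quad \Sigma(z)=\begin{pmatrix}0\\ -\eta\, d_x\sigma(x)\end{pmatrix},
\end{align*}
while the target equation \eqref{lim-sode0} is $dz=b(z)\,dt+\Sigma(z)\circ dB$. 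Since $f,\sigma\in\mathcal C_b^2(\mathbb R^d)$, the coefficients $b$ and $\Sigma$ are globally Lipschitz with bounded first and second derivatives; hence for each fixed $\omega$ the random ODE has a unique solution $z_\delta$ on $[0,T]$, and the Stratonovich SDE has a unique strong solution $z$, both issued from the same $\mathbb F_0$-measurable datum $(x_0,p_0)$.

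A structural simplification is worth recording first. The Stratonovich-to-It\^o correction for a single noise is $\tfrac12 (D\Sigma)\,\Sigma$, where $D\Sigma$ is the Jacobian of $\Sigma$. Here $\Sigma$ has vanishing $x$-block while depending only on $x$, so $D\Sigma$ has its only nonzero block in the lower-left position and $(D\Sigma)\Sigma=0$. Consequently the limit equation \eqref{lim-sode0} coincides with its It\^o form $dz=b(z)\,dt+\Sigma(z)\,dB$: no genuine correction term survives, which streamlines the identification of the limit and sidesteps the commutator obstructions typical of the multidimensional Wong--Zakai problem.

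To establish the convergence I would estimate $z_\delta-z$ through the integral formulation. On each mesh interval $[t_k,t_{k+1}]$ of width $\delta$ the interpolation satisfies $\dot\xi_\delta=(B(t_{k+1})-B(t_k))/\delta$, so a first-order Taylor expansion of $\Sigma(z_\delta)$ about its value at $t_k$ produces, besides a Lipschitz-controlled drift discrepancy, the characteristic term
\begin{align*}
\sum_k \int_{t_k}^{t_{k+1}}\big(\xi_\delta(s)-\xi_\delta(t_k)\big)\dot\xi_\delta(s)\,ds=\tfrac12\sum_k\big(B(t_{k+1})-B(t_k)\big)^2,
\end{align*}
which, once weighted by the second-order Taylor coefficient $(D\Sigma)\Sigma$, accumulates to $\tfrac12\int_0^t (D\Sigma)\Sigma\,ds$ and thereby recovers precisely the Stratonovich (here vanishing) correction. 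Combined with the almost sure uniform convergence $\xi_\delta\to B$ and the boundedness of the coefficients, a Gronwall argument applied to $\sup_{s\le t}|z_\delta(s)-z(s)|$ closes the estimate and yields $z_\delta\to z$.

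The main obstacle is upgrading this to the almost sure, uniform-on-$[0,T]$ statement claimed, rather than mere convergence in probability or in $L^2$. I would first obtain a quantitative rate in a suitable moment norm — using the boundedness of $d_x\sigma$ and $d_x^2\sigma$ to control the Taylor remainders together with a Burkholder--Davis--Gundy or Doob maximal inequality for the martingale pieces — and then pass to almost sure convergence along the dyadic sequence $\delta=2^{-n}$ via Borel--Cantelli, afterwards removing the restriction to dyadic $\delta$ by continuity of the solution map in $\delta$. The $\mathcal C_b^2$ hypothesis enters exactly here: the second derivatives are what make the remainder terms summable and pin down the null correction, while their boundedness keeps all constants uniform in $\omega$ and $\delta$.
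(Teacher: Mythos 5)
Your argument is, in substance, the same classical route the paper takes: well-posedness of \eqref{inhs} and \eqref{lim-sode0} from the global Lipschitz bounds furnished by $f,\sigma\in\mathcal C_b^2$, followed by the standard Wong--Zakai expansion (the paper simply cites the classical references at this point, so you are filling in exactly the mechanism it defers to). Your structural observation that the noise field $\Sigma(z)=(0,-\eta\, d_x\sigma(x))^{\top}$ satisfies $(D\Sigma)\Sigma=0$, so that the It\^o and Stratonovich forms of \eqref{lim-sode0} coincide, is correct and is a genuine simplification not made explicit in the paper; it is worth keeping. The one step that is thinner than it should be is the upgrade to the almost sure, uniform-in-$t$ statement: a moment bound plus Borel--Cantelli gives a.s. convergence only along the dyadic sequence $\delta=2^{-n}$, and ``continuity of the solution map in $\delta$'' does not by itself interpolate between two piecewise-linear drivers on incommensurable meshes (their difference in sup-norm is not controlled by $|\delta-\delta'|$). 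For a single scalar Brownian motion the cleaner device is the Doss--Sussmann conjugation, which here is explicit because the flow of $\Sigma$ is the shear $(x,p)\mapsto(x,\,p-u\,\eta\, d_x\sigma(x))$: writing $z_\delta(t)=\Phi(\xi_\delta(t),y^\delta(t))$ and $z(t)=\Phi(B(t),y(t))$ turns both equations into the same random ODE driven pathwise by $\xi_\delta$, respectively $B$, and the a.s. uniform convergence $\xi_\delta\to B$ then transfers directly to $z_\delta\to z$ for every $\delta\to 0$, with no Borel--Cantelli needed. With that repair (or with the a.s. claim read, as is customary, along an arbitrary sequence $\delta_n\to0$), your proof is complete and consistent with the paper's.
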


\begin{proof}
The condition of $\sigma,f$ ensures the global existence of a unique strong solution for \eqref{inhs} and \eqref{lim-sode0} by using standard Picard iterations. Then one can follow the classical arguments (see e.g. \cite{MR0400425}) to show that the solution of \eqref{inhs} is convergent to that of \eqref{lim-sode0} and that the right hand side of \eqref{inhs} is convergent to that of \eqref{lim-sode0}. 
\end{proof}

The following lemma relaxes the classical conditions on the convergence of Wong--Zakai approximation whose proof is presented in Appendix. We call that
$g$ is equivalent to $\mathbb I_{d\times d}$ if $g\in \mathcal C_b^{\infty}(\mathbb R^d;\mathbb R^d)$ is symmetric satisfying  $\Lambda\mathbb{I}_{ d\times d} \succeq g(x)\succeq \lambda \mathbb{I}_{ d\times d}$ for some constant $0<\lambda\le \Lambda.$
In the following, we will use the standard notation for the matrix product, that is, $g(x)\cdot(y,z)=y^{\top} g(x) z$ and $g(x)\cdot y=g(x)y$.

\begin{lm}\label{rd-won}
Let $\mathcal M=\mathbb R^d$, $T>0$, $g$ be equivalent to $\mathbb I_{d\times d}$.
Assume that $f,\sigma\in \mathcal C^2_{p}(\mathcal M)$, $\xi_{\delta}$ is the linear interpolation of $B(t)$ with the width $\delta$, that $x_0,p_0$ are $\mathbb F_0$-apdated and possess any finite $q$-moment, $q\in \mathbb N^+$, and that 
\begin{align}\nonumber
& H_0(x,p)\ge c_0|p|+c_1 |x|, \; \text{for large enough} \; |x|,|p|\\\label{grow-con}
&\eta^2 |\nabla_{pp} H_0(x,p) \cdot (\nabla_x \sigma(x),\nabla_x \sigma(x))|
+ \eta |\nabla_{pp} H_0(x,p) \cdot (p,\nabla_x \sigma(x))|\\\nonumber
&+ \eta |\nabla_{pp} H_0(x,p) \cdot (\nabla_x \sigma(x), -\frac 12p^{\top }d_xg^{-1}(x)p-\nabla_x f(x))|+\eta|\nabla_{px} H_0(x,p)\cdot(\nabla_x \sigma,g^{-1}(x)p)| \\\nonumber
&\quad+ \eta |\nabla_{p} H_0(x,p) \cdot \nabla_{xx} \sigma(x)g^{-1}(x)p| \le C_1+c_1 H_0(x,p).
\end{align}
Then the solution of \eqref{inhs} on $[0,T]$ is convergent in probability to the solution of 
\begin{align}\label{lim-sode}
d x=g^{-1}(x)p,\;
d p=-\frac 12 p^{\top} d_x g^{-1}(x) p-d_x f(x)-\eta d_x \sigma(x) \circ d B(t).
\end{align}
\end{lm}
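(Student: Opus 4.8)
The plan is to use the Hamiltonian $H_0(x,p)=\frac12 p^\top g^{-1}(x)p+f(x)$ as a Lyapunov function and to reduce the global convergence to the bounded--coefficient statement of Lemma \ref{bound-case} by a localization argument. First I would record that along the limiting diffusion \eqref{lim-sode} the energy obeys, by It\^o's formula, $dH_0=-\eta\,\nabla_p H_0\cdot\nabla_x\sigma\,dB+\tfrac12\eta^2\,\nabla_{pp}H_0\cdot(\nabla_x\sigma,\nabla_x\sigma)\,dt$, the deterministic Hamiltonian part $\nabla_x H_0\cdot\nabla_p H_0-\nabla_p H_0\cdot\nabla_x H_0$ cancelling identically. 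Here the It\^o and Stratonovich forms of \eqref{lim-sode} coincide, because the diffusion vector field $(0,-\eta\nabla_x\sigma)$ depends on $x$ alone while $x$ carries no noise. Thus the drift of $H_0$ is exactly the first term in \eqref{grow-con} and its diffusion coefficient is $\eta\,\nabla_p H_0\cdot\nabla_x\sigma=\eta\,\nabla_{pp}H_0\cdot(p,\nabla_x\sigma)$, the second term; feeding these into \eqref{grow-con} yields $\mathrm{drift}(H_0)\le C_1+c_1H_0$ and $\eta|\Phi|\le C_1+c_1H_0$ with $\Phi:=\nabla_p H_0\cdot\nabla_x\sigma$. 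Applying It\^o to $H_0^q$, the Burkholder--Davis--Gundy inequality and Gr\"onwall then give $\mathbb E\sup_{t\le T}H_0(x_t,p_t)^q<\infty$ for every $q$, which with the coercivity $H_0\ge c_0|p|+c_1|x|$ produces non-explosion and uniform moment bounds for the limit.

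The same functional controls the Wong--Zakai flow \eqref{inhs}, and this is the heart of the argument. For fixed $\omega$ the chain rule gives $\frac{d}{dt}H_0=-\eta\,\Phi(x_\delta,p_\delta)\,\dot\xi_\delta$, so $H_0(t)-H_0(0)=-\eta\int_0^t\Phi\,d\xi_\delta$; the difficulty is that $\dot\xi_\delta$ is unbounded as $\delta\to0$, so this cannot be estimated termwise. I would integrate by parts over each interpolation interval and use $\frac{d}{dt}\Phi=\nabla_{px}H_0\cdot(\nabla_x\sigma,g^{-1}p)+\nabla_pH_0\cdot\nabla_{xx}\sigma\,g^{-1}p+\nabla_{pp}H_0\cdot(\nabla_x\sigma,-\tfrac12 p^\top d_xg^{-1}p-\nabla_x f)-\eta\,\nabla_{pp}H_0\cdot(\nabla_x\sigma,\nabla_x\sigma)\,\dot\xi_\delta$; the first three summands are, in order, the fourth, fifth and third terms of \eqref{grow-con}, while the last, after a second integration by parts against $\tfrac12\xi_\delta^2$, produces in the limit the It\^o correction $\tfrac12\eta^2\nabla_{pp}H_0\cdot(\nabla_x\sigma,\nabla_x\sigma)$ already appearing as the first term. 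Hence \eqref{grow-con} is precisely the bound making the effective drift of $H_0$ along \eqref{inhs} dominated by $C_1+c_1H_0$ uniformly in $\delta$, and a Gr\"onwall argument, retaining the moments of $x_0,p_0$ and of the increments of $\xi_\delta$, delivers $\sup_\delta\mathbb E\sup_{t\le T}H_0(x_\delta,p_\delta)^q<\infty$.

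With uniform moment bounds in hand I would close by localization. For $R>0$ set $\tau_R^\delta=\inf\{t:|x_\delta(t)|+|p_\delta(t)|>R\}$ and $\tau_R=\inf\{t:|x(t)|+|p(t)|>R\}$. On $[0,\tau_R^\delta\wedge\tau_R]$ the data $f,\sigma,g^{-1}$ and the relevant derivatives are bounded, so the stopped system satisfies the hypotheses of Lemma \ref{bound-case} (equivalently, one mollifies $f,\sigma$ outside $B_R$ to reduce to $\mathcal C_b^2$), giving convergence of the stopped Wong--Zakai flow to the stopped diffusion by the classical estimates of \cite{MR0400425}. The coercivity in \eqref{grow-con} together with the previous paragraph gives, via Markov's inequality, $\sup_\delta\mathbb P(\tau_R^\delta\le T)\to0$ and $\mathbb P(\tau_R\le T)\to0$ as $R\to\infty$. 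A standard splitting, choosing $R$ so these exit probabilities fall below $\varepsilon$ uniformly in $\delta$ and then letting $\delta\to0$ on $\{\tau_R^\delta\wedge\tau_R>T\}$, upgrades the localized almost sure convergence to convergence in probability on $[0,T]$, which is the assertion.

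The principal obstacle is the uniform--in--$\delta$ energy estimate of the second paragraph: since $\|\dot\xi_\delta\|_{L^\infty}\to\infty$, one cannot bound $\int_0^t\Phi\,d\xi_\delta$ by the supremum of $|\Phi|$ and must instead extract the cancellation hidden in the oscillations of $\dot\xi_\delta$. Making the integration--by--parts scheme rigorous, namely controlling the interval--endpoint remainders and showing that the iterated $\dot\xi_\delta$ contributions converge to the quadratic--variation correction rather than accumulating, is the delicate point, and it is exactly here that the polynomial--growth structure encoded in $\mathcal C^2_p$ and the finiteness of all moments of $x_0,p_0$ enter, through the moments of $\sup_k|\xi_\delta(t_{k+1})-\xi_\delta(t_k)|$.
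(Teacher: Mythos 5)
Your proposal follows the same core strategy as the paper's proof: use $H_0$ as a Lyapunov functional, observe that along the limit SDE the It\^o and Stratonovich forms agree so that the energy drift and diffusion are exactly the first two terms of \eqref{grow-con}, and---for the Wong--Zakai flow---expand $\int_0^t \Phi\, d\xi_\delta$ interval by interval, replacing $\Phi$ by its value at the left endpoint and controlling the remainder through $\tfrac{d}{dt}\Phi$, whose summands you correctly match, term by term, with the remaining entries of \eqref{grow-con}. This is precisely the paper's ``delicate estimate'' (its terms $I_j^1$ and $I_j^{21},\dots,I_j^{24}$), so the heart of your argument coincides with theirs. Two points of divergence are worth noting. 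First, you claim a global uniform bound $\sup_\delta \E\sup_{t\le T}H_0^q<\infty$ from a single Gr\"onwall pass; the paper is more careful here, because the Gr\"onwall factor is $\exp\bigl(\int_0^t c_1(|\dot\xi_\delta|^2+|\dot\xi_\delta|)\delta\,ds\bigr)$ and must be decoupled from the martingale term. It does this by stopping at $\tau_R=\inf\{t:\int_0^{[t]_\delta}|\dot\xi_\delta|^2\delta\,ds\ge R\}$, working interval by interval with independence of increments and Fernique's theorem, and obtaining only $\E[\sup_{t<\tau_R}H_0^p]\le C_{R,p}$; since $\PP(\tau_R\le T)=O(1/R)$ uniformly in $\delta$, this suffices for convergence in probability, so your omission is repairable but not free. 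Second, in the final step you localize and invoke Lemma \ref{bound-case} (after mollifying the coefficients outside a ball), whereas the paper proves the stopped convergence directly, decomposing the difference of the stochastic integrals into four martingale-type terms and extracting an explicit rate $\E[|x^\delta-x|^2+|p^\delta-p|^2]\le C(R_1)e^{C_{R_1}T}\delta^{1/2}$ before $\tau_R\wedge\tau_{R_1}$. Your reduction is legitimate and shorter to state (note that Lemma \ref{bound-case} is stated for $g=\mathbb I$, so you must appeal to the general bounded-coefficient Wong--Zakai theorem of \cite{MR0400425} to cover the $g^{-1}$ and $d_xg^{-1}$ terms), but it buys no rate, while the paper's direct computation is what later yields the quantitative $\delta^{1/2}$ estimates and the uniform-in-time convergence via the $L^q$ bounds over subintervals.
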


Denote the solution of \eqref{inhs} by $(x^{\delta}(\cdot,x_0,p_0),p^{\delta}(\cdot,x_0,p_0)).$
According to Lemma \ref{rd-won}, by studying the equation of $\frac {\partial}{\partial x_0} x^{\delta}(t,x_0,p_0)$ and $\frac {\partial}{\partial p_0} x^{\delta}(t,x_0,p_0)$, one could obtain the following convergence result.

\begin{cor}
Under the condition of Lemma \ref{rd-won}, let $f,\sigma\in \mathcal C_p^3 (\mathcal M).$ Then for any $\epsilon>0$, it holds that 
\begin{align*}
   &\lim_{\delta\to 0} \mathbb P\Big(\sup_{t\in [0,T]}|\frac {\partial}{\partial x_0} x^{\delta}(t,x_0,p_0)-\frac {\partial}{\partial x_0} x(t,x_0,p_0)|\\
   &+\sup_{t\in [0,T]}|\frac {\partial}{\partial p_0} x^{\delta}(t,x_0,p_0)-\frac {\partial}{\partial p_0} x(t,x_0,p_0)| \ge \epsilon \Big)=0. 
\end{align*}
\end{cor}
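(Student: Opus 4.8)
The plan is to differentiate the flow maps of \eqref{inhs} and \eqref{lim-sode} with respect to the initial data, to recognize the resulting equations of first variation as a second Wong--Zakai approximation, and then to run the convergence mechanism of Lemma \ref{rd-won} on the enlarged system. Since $f,\sigma\in\mathcal C^3_p(\mathcal M)$ and $g$ is equivalent to $\mathbb I_{d\times d}$, the right-hand sides of \eqref{inhs} and \eqref{lim-sode} are twice continuously differentiable with derivatives of polynomial growth, so by standard results on differentiability of ODE and SDE flows the maps $(x_0,p_0)\mapsto(x^\delta,p^\delta)$ and $(x_0,p_0)\mapsto(x,p)$ are differentiable in the initial datum. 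Writing $(u^\delta,v^\delta)=(\frac{\partial}{\partial x_0}x^\delta,\frac{\partial}{\partial x_0}p^\delta)$, differentiation of \eqref{inhs} in $x_0$ yields the linear system
\begin{align*}
\dot u^\delta&=\partial_x\big(g^{-1}(x^\delta)p^\delta\big)\,u^\delta+g^{-1}(x^\delta)\,v^\delta,\\
\dot v^\delta&=B_0(x^\delta,p^\delta)\,u^\delta+C_0(x^\delta,p^\delta)\,v^\delta-\eta\, d_{xx}\sigma(x^\delta)\,u^\delta\,\dot\xi_\delta,
\end{align*}
where $B_0,C_0$ collect the Hessians $d_{xx}g^{-1},d_{xx}f,d_{xx}\sigma$ and the first derivatives of the drift evaluated along the flow; the $p_0$-variation obeys the same equations with initial data $(u^\delta(0),v^\delta(0))=(0,\mathbb I_{d\times d})$ in place of $(\mathbb I_{d\times d},0)$.

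First I would view the pair ``flow plus first variation'' $\Xi^\delta:=(x^\delta,p^\delta,u^\delta,v^\delta)$ as the solution of a single Wong--Zakai approximation on a higher-dimensional space, driven by the same $\dot\xi_\delta$. Its formal limit is the augmented Stratonovich system $\Xi:=(x,p,u,v)$ obtained by adjoining to \eqref{lim-sode} the equations of first variation of the limiting flow; by the theory of stochastic flows this variation process coincides with $(\frac{\partial}{\partial x_0}x,\frac{\partial}{\partial x_0}p)$, so that Wong--Zakai approximation and differentiation in the initial datum commute in the limit. The conclusion of the corollary is then exactly the convergence in probability, uniformly on $[0,T]$, of the $x$-components of the $x_0$- and $p_0$-variations of $\Xi^\delta$ to those of $\Xi$.

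Next I would prove $\Xi^\delta\to\Xi$ in probability uniformly on $[0,T]$ by re-running the argument behind Lemma \ref{rd-won} for the enlarged system. Two ingredients are required. First, a uniform-in-$\delta$ moment bound for the variation: because the variational equation is linear in $(u^\delta,v^\delta)$ with coefficients of polynomial growth in $(x^\delta,p^\delta)$, a Gr\"onwall estimate combined with the finite moments of $(x^\delta,p^\delta)$ already furnished by Lemma \ref{rd-won} (and with the hypothesis that $x_0,p_0$ have finite $q$-moments for every $q\in\mathbb N^+$) gives $\sup_\delta\mathbb E\sup_{t\le T}(|u^\delta|^q+|v^\delta|^q)<\infty$. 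Second, convergence of the coefficients: since $(x^\delta,p^\delta)\to(x,p)$ in probability uniformly by Lemma \ref{rd-won} and the Hessians are continuous, the coefficient matrices converge along the flow, so the linear Wong--Zakai equation for $(u^\delta,v^\delta)$ converges to the Stratonovich variational equation. Feeding both into the Wong--Zakai estimates used in the Appendix proof of Lemma \ref{rd-won} yields the convergence of $\Xi^\delta$, and hence of the sought-after $x$-derivatives.

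The main obstacle is the growth bookkeeping for the enlarged system. Unlike \eqref{inhs}, the variational equation is not of Hamiltonian type, so the structural condition \eqref{grow-con} is not available and must be replaced by a direct control of the polynomially-growing Hessian coefficients against the moments of $(x^\delta,p^\delta)$; this is precisely where the upgrade from $\mathcal C^2_p$ in Lemma \ref{rd-won} to $\mathcal C^3_p$ is spent, the extra derivative being consumed when the drift is differentiated once and the associated Wong--Zakai correction term for the linearized equation is formed. Making the Gr\"onwall and Wong--Zakai estimates uniform in $\delta$ while the coefficients are only of polynomial growth --- typically through a localization/stopping-time argument together with the uniform moment bounds above --- and verifying that the correction terms specific to the linearized equation vanish in the limit, is the technical heart of the proof. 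Once these estimates are in place the convergence of the $x$-components follows at once, and the $p_0$-derivatives are treated identically via the change of initial data noted above.
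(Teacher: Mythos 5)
Your proposal follows exactly the route the paper indicates: the paper offers no detailed proof of this corollary, only the remark that it is obtained ``by studying the equation of $\frac{\partial}{\partial x_0}x^{\delta}(t,x_0,p_0)$ and $\frac{\partial}{\partial p_0}x^{\delta}(t,x_0,p_0)$'' together with Lemma \ref{rd-won}, which is precisely your plan of linearizing the flow, treating the augmented system as a Wong--Zakai approximation, and re-running the moment and localization estimates from the Appendix. Your fleshed-out version, including the observation that the extra derivative in $\mathcal C^3_p$ is spent on the Hessian coefficients of the variational equation and that the non-Hamiltonian linearized system must be controlled by linearity plus the moment bounds on $(x^{\delta},p^{\delta})$ rather than by \eqref{grow-con}, is consistent with the paper's intent and supplies the details the paper omits.
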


\begin{rk}
 One may impose more additional conditions on the coefficients $f,\sigma$ to obtain the strong convergence order $\frac 12$ of the Wong--Zakai approximation, that is,
 \begin{align*}
     &\E \Big [ \sup_{t\in[0,T]}|x^{\delta }(t)-x(t)|^p\Big ]
     +\E \Big [ \sup_{t\in[0,T]}|x^{\delta }(t)-x(t)|^p\Big ]
     \le C \delta^{\frac p2}.
 \end{align*}
 
 The convergence in probability yield that there exists a pathwise convergent subsequence. In this sense, the limit equation of \eqref{inhs} is \eqref{lim-sode} on $[0,T]$.
When the growth condition \eqref{grow-con} fails, one could also obtain the convergence in probability of $(x^{\delta},p^{\delta})$ before the stopping time $\tau_R\wedge \tau_{R_1}$ (see Appendix for the definition of $\tau_R$ and $\tau_{R_1}$).  One could also choose different type of Wong--Zakai approximation of the Wiener process and obtain similar results (see, e.g., \cite{MR195142}). 
\end{rk}

\subsection{Wong--Zakai approximation on a differential manifold $\mathcal M$}

Assume that $\mathcal M\subset \mathbb R^k$ is a   $d$-dimensional differential manifold of class $\mathcal C^{\alpha}, \alpha\in \mathbb N^+\cup {\infty}$ without boundary. Given a $\mathcal C^{\alpha}$-diffeomorphism $\phi: W \to V\subset \mathcal M$ from an open subset $W$ of $\mathbb R^d$  to an open set $V$ of $\mathcal M,$ the inverse  $\phi^{-1}: V\to W$ is called a chart or coordinate system on $\mathcal M.$  The coordinate components are denoted by $\Phi_1,\Phi_2,\cdots,\Phi_d$, $d\in \mathbb N^+$. 
The tangent bundle of $\mathcal M$ is denoted by $T\mathcal M:=\{(x,y)\in \mathbb R^k \times \mathbb R^k |x\in \mathcal M, y\in T_x(\mathcal M)\}.$ Moreover, $dim T_x(\mathcal M)=d.$
The canonical projection is denoted by $\pi: T \mathcal M \to \mathcal M.$


 In the following, we start from the deterministic Hamiltonian system
\begin{align*}
\dot x&=p,\;\\
\dot p&=-d_x f(x),
\end{align*}
where the vector field $(p,-d_x f(x))\in T_{(x,p)}T \mathcal M$ for all $(x,p)\in T \mathcal M.$
We show how the random force can be added to the system so that $(\dot x, \dot p)\in \mathbb R^{k}\times \mathbb R^{k}$ is still tangent to $T\mathcal M$ at $(x,p)$.
As a physical interpretation, this tangent condition
 represents the constrain of the motion equations and
  is to ensure that the physical motion is living in $T\mathcal M$ by the Kamke property of the maximal solutions (see e.g. \cite[Chapter 3]{MR1368671}).
Consider $\mathcal M$  which is regularly defined as the zero level set of a $\mathcal C^{\infty}$ map $F$ from $\mathbb R^k$ to $\mathbb R^{k-d}$. 
Then we have that  the tangent space to $\mathcal M$ at $x$ is $T_x\mathcal M:=\{p\in \mathbb R^k | F'(x)p=0 \}$, and $T\mathcal M=\{(x,p)\in \mathbb R^k \times \mathbb R^k | F(x)=0, F'(x)p=0\}.$ We can also obtain 
\begin{align*}
TT\mathcal M=\{(x,p,\dot x,\dot p)| F(x)=0, F'(x)p=0, F'(x)\dot x=0, F''(x)(\dot x,p)+F'(x)\dot p=0\}.    
\end{align*}
Therefore, if the added random force satisfies, 
\begin{align}\label{con-wel}
F'(x)\dot p =-F''(x)(\dot x,p)=\psi(x;p,\dot x), \; \dot x\in T_x(\mathcal M),
\end{align}
we have $(\dot x, \dot p)\in T_{(x,p)}(T\mathcal M)$.
Following \cite{MR1368671}, we denote a smooth mapping $\psi$ from the vector bundle $\{(x;u,v)\in \mathbb R^k\times(\mathbb R^k\times \mathbb R^k)| x\in \mathcal M, u,v\in T_x(\mathcal M)\}$  to $\mathbb R^{k-d}$.
Given any vector $z\in \mathbb R^{k-d}$, denote by $Az\in (Ker\; F'(x))^{\perp}=(T_x\mathcal M)^{\perp}$ the unique solution of $F'(x)\dot p=z$. Hence, the solution of \eqref{con-wel} satisfies
\begin{align*}
\dot p= \mu (x;p,\dot x)+w,
\end{align*}
where $\mu (x;p,\dot x)=A\psi(x;p,\dot x)\in (T_x(\mathcal M))^\perp$ and $w\in T_x(\mathcal M).$
We observe that to ensure $(\dot x,\dot p)\in T_{(x,p)}(T\mathcal M)$, it suffices to take $u,w\in T_x(\mathcal M)$ and define $(\dot x,\dot p)=(u,\mu (x;p,u)+w).$
In Eq. \eqref{inhs} with the driving noise being $-d_x\sigma(x)\dot \xi_{\delta},$ using the above condition, we can verify that it satisfies that $(\dot x,\dot p)\in T_{(x,p)}(T\mathcal M).$ Similarly, a second order differential equation with random force satisfies
\begin{align*}
\ddot x=\mu (x;\dot x,\dot x)+\mathcal R(t,x,\dot x),
\end{align*}
where  $\mathcal R_t: T\mathcal M \ni (x,\dot x) \mapsto \mathcal{R}(t,x,\dot x)\in  \mathbb R^k$ is a tangent vector field on $\mathcal M.$ A typical example is that {$\mathcal R=-\alpha \dot x+a(t,x)$ with the frictional force $-\alpha \dot x$}  and applied random force $a(t,x)=-d_x\sigma(x)\dot \xi_{\delta}(t).$ When $\mathcal R=0,$ the above equation is inertial and is so-called geodesic equation on $\mathcal M$, {which plays an important role in the optimal transport theory (see e.g. \cite{Vil09,MR2808856,CP17,CHLZ12})}.

\begin{lm}\label{mani-wong}
Suppose that $\mathcal M$ is a $d$-dimensional compact smooth differential manifold. Let $g=\mathbb I$, $f,\sigma$ be smooth functions on $\mathcal M$, $\xi_{\delta}$ be the linear interpolation of $B(t)$ with width $\delta$, and that $x_0,p_0$ are $\mathbb  F_0$-adapted and possess any finite $q$-moment, $q\in \mathbb N^+$. Then  $(x^{\delta},p^{\delta})$ is convergent in probability to the solution $(x,p)$ of \eqref{lim-sode}.  
\end{lm}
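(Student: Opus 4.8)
The plan is to lift the problem into the ambient space $\mathbb R^k\supset\mathcal M$ and reduce it to the Euclidean convergence already established in Lemma \ref{rd-won}, using compactness to supply the uniform bounds that there had to be imposed through the growth hypothesis \eqref{grow-con}. Writing the dynamics in the ambient coordinates of Section 2.2, the approximating system on $T\mathcal M$ reads, with $p^\delta=\dot x^\delta$,
\begin{align*}
\ddot x^\delta=\mu(x^\delta;\dot x^\delta,\dot x^\delta)-d_xf(x^\delta)-\eta\,d_x\sigma(x^\delta)\,\dot\xi_\delta(t),
\end{align*}
where $\mu(x;\cdot,\cdot)$ is the normal constraint force from \eqref{con-wel} that keeps $(x,\dot x)$ tangent to $T\mathcal M$ and $d_xf,d_x\sigma$ denote the tangential gradients; the candidate limit is its Stratonovich counterpart, namely \eqref{lim-sode} with $g=\mathbb I$ read on the embedded $\mathcal M$. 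Since $\mathcal M$ is compact and smooth, $f,\sigma$ and all their derivatives, together with $\mu$ and its derivatives, are globally bounded on $\mathcal M$, and after extending them off $\mathcal M$ by a smooth compactly supported cutoff we obtain coefficients in $\mathcal C_b^2(\mathbb R^k)$ apart from the term $\mu(x;p,p)$, which is quadratic in $p$.

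First I would establish a uniform-in-$\delta$ moment bound on the momentum. The position $x^\delta$ lives on the compact set $\mathcal M$ and is automatically bounded, so the only quantity that can grow is $p^\delta$. The decisive observation is that the constraint force does no work: since $\mu(x;p,p)\in(T_x\mathcal M)^{\perp}$ while $p\in T_x\mathcal M$, one has $p\cdot\mu(x;p,p)=0$, whence the energy obeys
\begin{align*}
\frac{d}{dt}\tfrac12|p^\delta|^2=-p^\delta\cdot d_xf(x^\delta)-\eta\,p^\delta\cdot d_x\sigma(x^\delta)\,\dot\xi_\delta(t),
\end{align*}
a relation that is \emph{linear} in $p^\delta$ with bounded coefficients. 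A Gronwall together with a Burkholder--Davis--Gundy estimate then yields $\sup_\delta\E\big[\sup_{t\in[0,T]}|p^\delta(t)|^q\big]<\infty$ for every $q$, and the analogous bound for the limit $p$. This is precisely the estimate that lets compactness substitute for \eqref{grow-con}.

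Next I would remove the quadratic term by localization. Introduce the stopping time $\tau_R=\inf\{t:|p^\delta(t)|\vee|p(t)|\ge R\}$. On $[0,\tau_R]$ the coefficients of both systems, including the truncated $\mu(x;p,p)$, are bounded with bounded derivatives, so the Euclidean Wong--Zakai convergence of Lemma \ref{rd-won}, or the bounded-coefficient version Lemma \ref{bound-case}, applies and gives convergence in probability of $(x^\delta,p^\delta)$ to $(x,p)$ up to $\tau_R$. The uniform moment bound gives $\sup_\delta\PP(\tau_R<T)\le CR^{-q}\to0$ as $R\to\infty$, which upgrades the localized convergence to convergence in probability on all of $[0,T]$, exactly in the spirit of the stopping-time argument indicated in the Remark following the Corollary.

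Finally I would verify that the convergence takes place inside $T\mathcal M$, so that the limit is the genuine manifold flow. For the approximation this is built in: by the construction of $\mu$ in \eqref{con-wel} the vector field is tangent to $T\mathcal M$, so $(x^\delta,p^\delta)\in T\mathcal M$ for all $t$ by the Kamke invariance property. For the limit the crucial point is that \eqref{lim-sode} is a \emph{Stratonovich} equation and hence obeys the ordinary chain rule: computing $dF(x_t)=F'(x_t)\circ dx_t=F'(x_t)p_t\,dt$ and then $d\big(F'(x_t)p_t\big)$, and invoking the defining relation of $\mu$, shows that $F(x_t)=0$ and $F'(x_t)p_t=0$ are preserved, so $(x,p)\in T\mathcal M$. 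I expect the main obstacle to be exactly this interplay between the manifold constraint and the Wong--Zakai correction: one must check that the correction term generated locally, which involves second derivatives of $f,\sigma$ and of $\mu$, assembles consistently into the global Stratonovich drift and remains compatible with tangency to $T\mathcal M$. The cancellation $p\cdot\mu(x;p,p)=0$ and the chain-rule invariance of the Stratonovich form are what make both the a priori bound and the limit identification go through.
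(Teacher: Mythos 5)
Your proposal is correct and rests on the same basic strategy as the paper's proof: embed the system into Euclidean space and invoke the Euclidean Wong--Zakai result (Lemma \ref{rd-won}), with compactness of $\mathcal M$ standing in for the growth hypothesis. The paper's own proof is only three lines: it embeds $T\mathcal M$ into $\mathbb R^{2k}$, extends the vector field $V(x,p)=(p,-d_xf(x)-\eta\, d_x\sigma(x))$, and cites Lemma \ref{rd-won}. You do noticeably more, and the extra content is genuinely useful. You make the normal constraint force $\mu(x;p,p)$ explicit (the paper's extended vector field silently omits it, even though it is needed for the flow to stay in $T\mathcal M$ for a general embedded $\mathcal M$); you isolate the cancellation $p\cdot\mu(x;p,p)=0$, which is exactly the structural reason the energy grows at most linearly in $p$ along the noise and hence the reason a condition of the type \eqref{grow-con} is automatic here; you localize with a stopping time to neutralize the quadratic-in-$p$ term before applying the Euclidean lemma; and you verify via the Stratonovich chain rule that the limit remains on $T\mathcal M$, a point the paper never addresses.

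One step is stated too optimistically: the claim that "a Gronwall together with a Burkholder--Davis--Gundy estimate" yields $\sup_\delta\E\big[\sup_{t\in[0,T]}|p^\delta(t)|^q\big]<\infty$. The integrand $\dot\xi_\delta\,ds$ is not a semimartingale differential, so BDG does not apply directly, and a naive Gronwall on $\tfrac12|p^\delta|^2$ produces a factor $\exp\big(C\int_0^T|\dot\xi_\delta(s)|\,ds\big)$ whose moments blow up as $\delta\to0$. The uniform-in-$\delta$ bound genuinely requires the summation-by-parts decomposition into discrete martingale increments plus Riemann-sum corrections carried out in the appendix proof of Lemma \ref{rd-won} (the paper flags exactly this pitfall there: "the above estimate \dots is too rough and exponentially depending on $\frac1\delta$"). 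Since your linear-in-$p$ energy identity is precisely what makes the hypothesis \eqref{grow-con} hold for the extended system, the cleanest repair is to check \eqref{grow-con} directly and let Lemma \ref{rd-won} supply both the uniform moments and the convergence, rather than deriving the moment bound independently first.
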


\begin{proof}
The existence and uniqueness of $(x,p)$ can be found in \cite{Hsu02}. 
The global existence of $(x^{\delta},p^{\delta})$ could be also obtained by the fact that $g=\mathbb I$, $f$ and $\sigma$ are globally Lipschitz
and that the growth condition \eqref{grow-con} holds.
We only need to show the convergence of $(x^{\delta},p^{\delta})$ in probability to $(x,p)$. 
Since $T\mathcal M$ is 2$d$-dimensional manifold which could be embedding to $\mathbb R^{2k},$ we can extend the vector field $V(x,p):=(p,-d_xf(x)-\eta d_x \sigma(x))$ to a vector field $\widetilde V(\cdot,\cdot)$ on $\mathbb R^{2k}$. And thus the equations of $(x,p)$ and $(x^{\delta},p^{\delta})$ can be viewed as the equations on $\mathbb R^{2k}$. 
The global existence of $(x,p)$ and $(x^{\delta},p^{\delta})$, together with Lemma \ref{rd-won}, yield the convergence in probability of $(x^{\delta},p^{\delta}).$
\end{proof}

\begin{rk}
The above result relies on the particular structure of $g=\mathbb I$ and the growth condition \eqref{grow-con}. If this condition \eqref{grow-con} fails, the explosion time $e(x^{\delta},p^{\delta})$ of $(x^{\delta},p^{\delta})$ may depend on $\delta$. And the convergence in probability may only hold before $e(x,p)\wedge \inf\limits_{\delta>0} e(x^{\delta},p^{\delta}).$ When applying different type of Wong--Zakai approximations, the different type of stochastic ODEs could be derived (see e.g. \cite{MR637061}).
\end{rk}

To end this section, we give a special example of stochastic Hamiltonian flows which concentrates on a submanifold with conserved quantities. 

\begin{ex}
Let $\mathcal M=\mathbb R^d,$ $g$ and $\widetilde g$ 
be metrics equivalent to $\mathbb I_{d\times d}.$ 
Define an action functional with random perturbation in dual coordinates, 
\begin{align*}
-\int_0^T (\<p,\dot x\>- H_0(x,p))dt+\int_0^TH_1(x,p)d\xi_{\delta}(t),
\end{align*}
where $H_0(x,p)=\frac 12 p^{\top}g^{-1}(x)p+f(x),$ $H_1(x,p)= \eta \frac 12 p^{\top}\widetilde g^{-1}(x)p+\eta \sigma(x)$ with smooth potentials $f$ and $\sigma$. 
Then the critical points under the constrain $x(0)=x_0,x(T)=x_T$ satisfies the stochastic Hamiltonian flows
\begin{align*}
\dot x&=\frac {\partial H_0}{\partial p}(x,p)+ \frac {\partial H_1}{\partial p}(x,p) \dot \xi_{\delta },\;\\
\dot p&=-\frac {\partial H_0}{\partial p}(x,p)- \frac {\partial H_1}{\partial p}(x,p)\dot \xi_{\delta }.
\end{align*}
Its limit  $(x,p)$ lie on the manifold $\{H_0(x,p)=H_0(x_0,p_0), H_1(x,p)= H_1(x_0,p_0)\}$ when the Hamiltonians satisfies that $\{H_0,H_1\}=0$ with $\{\cdot,\cdot\}$ being the Possion bracket. 
 Similar to Lemma \ref{rd-won}, it can be shown that $(x^{\delta},p^{\delta})$ converges globally to $(x,p)$ in probability if $H_0$ or $H_1$ satisfies the growth condition \eqref{grow-con}.

\end{ex}


\section{Stochastic Wasserstein Hamiltonian flow}
\label{swhf}
In this section, we study the behaviors of the inhomogenous Hamiltonian system \eqref{inhs} and stochastic Hamiltonian system \eqref{lim-sode} on the density manifold. 
To illustrate the strategy, let us focus on the case that $(\mathcal M,g)$ equals $(\mathbb T^d,\mathbb I)$ or $(\mathbb R^d,\mathbb I).$
Given the filtered complete probability space $(\Omega,\mathbb F, (\mathbb F_t)_{t\ge 0},\mathbb P),$ we assume that $\xi_{\delta}(t)$ is the piecewisely linear Wong--Zakai approximation of a standard Brownian motion, and that $x_0$ is a random variable with the density $\rho_0$ on another complete probability space $(\widetilde \Omega,\mathcal {\widetilde  F},\widetilde {\mathbb P})$. 
For a fixed $\widetilde \omega\in \widetilde \Omega$, we denote $\tau^{\delta}:=\inf\{t\in (0,T] | x_t^{\delta} \; \text{is not a smooth diffeomorphism on} \; \mathcal M\}$, $p_t^{\delta }=v(t,x_t^{\delta})$ is the vector field depending on the position and time. 
{Here we view the momentum $p$ as the function $v$ depending on both time and space.}
Eq. \eqref{inhs} becomes
\begin{align*}
&\frac {d}{dt} x_t^{\delta}=v(t, x_t^{\delta}),\\
&\frac {d}{dt} v(t,x_t^{\delta})=-\nabla f(x_t^{\delta})-\eta \nabla \sigma(x_t^{\delta}) \dot \xi_{\delta}(t).
\end{align*}
Differentiating $v(t,x_t^{\delta}(x_0))$ before $\tau^{\delta}$ leads to
\begin{align*}
\partial_t v(t,x_t^{\delta}(x_0))+\nabla v(t,x_t^{\delta}(x_0))\cdot \frac {d}{dt} x_t^{\delta}
&=\partial_t v(t,x_t^{\delta}(x_0))+\nabla v(t,x_t^{\delta}(x_0))\cdot v(t,x_t^{\delta}(x_0))\\
&=-\nabla f(x_t^{\delta}(x_0))-\eta \nabla \sigma(x_t^{\delta}(x_0)) \dot \xi_{\delta}(t).
\end{align*}
Taking $x_0=(x_t^{\delta})^{-1}(x)$, we obtain the following conservation law with random perturbation, 
\begin{align}\label{con-law}
\partial_t v(t,x)+\nabla v(t,x)\cdot v(t,x)=-\nabla f(x)-\eta \nabla \sigma(x) \dot \xi_{\delta}(t).
\end{align}
Taking any test function $\psi$ in $C^{\infty}(\mathcal M),$ it holds that 
\begin{align*}
\frac {d}{dt}\E_{\widetilde \Omega}[\psi(x_t^{\delta}(x_0))]
&=\frac {d}{dt} \int_{\mathcal M}\psi(x)\rho(t,x)dx
=\int_{\mathcal M}\nabla \psi(x_t^{\delta}(x)) \cdot v(t,x_t^{\delta}(x))\rho_0(x)dx\\
&=\int_{\mathcal M}\nabla \psi(x) \cdot v(t,x)\rho_t(x)dx,
\end{align*}
which implies that for  $\omega\in \Omega,$ $\rho_t=x_t^{\delta}{\#}\rho_0$, i.e., $\rho_t$ equals  the distribution generated by the map $x_t(\cdot)$ push-forward $\rho_0$,  satisfies the continuity equation,
\begin{align}\label{cont}
\partial_t \rho(t,x)+\nabla\cdot (\rho(t,x)v(t,x))=0.
\end{align}

Introducing the pseudo inverse $(-\Delta_{\rho})^\dagger$ (see e.g. \cite{CLZ20}) of $-\Delta_{\rho}=-\nabla \cdot (\rho \nabla)$ for a positive density $\rho$, we denote $S_t=(-\Delta_{\rho_t})^\dagger \partial_t \rho_t.$ When there exists a potential $S$ such that $v=\nabla S$, the conservation law with random influence \eqref{con-law} and the continuity equation \eqref{cont} induce a Hamiltonian system in density manifold before $\tau^{\delta}$,
\begin{align}\label{whs}
\partial_t \rho_t&=\frac {\delta }{\delta S_t} \mathcal H_0(\rho_t,S_t)=-\nabla\cdot (\rho_t\nabla S_t),\\\nonumber
\partial_t S_t&=-\frac {\delta }{\delta \rho_t} \mathcal H_0(\rho_t,S_t)-\frac {\delta}{\delta \rho_t} \mathcal H_1(\rho_t,S_t)\dot \xi_{\delta}(t)+C(t)\\\nonumber
&=-\frac 12 |\nabla S_t|^2-\frac {\delta }{\delta \rho_t}\mathcal F(\rho_t)- \frac {\delta }{\delta \rho_t}\eta \Sigma (\rho_t) \dot \xi_{\delta}(t) +C(t),
\end{align}
where $C(t)$ is  an arbitrary stochastic process on $(\Omega,\mathbb F,\mathbb P)$ independent of the spatial position $x$ and  $v(0,\cdot) = \nabla S(0,\cdot)$.
Here the dominated average energy is $$\mathcal H_0(\rho,S):=K(\rho,S)+\mathcal  F(\rho)=\int_{\mathcal M}\frac 12 |\nabla S(x)|^2\rho(x) dx+\int_{\mathcal M} f(x)\rho(x)dx,$$ and  the 
perturbed average energy is $$\mathcal H_1(\rho,S,t)= \eta \Sigma (\rho_t)=\eta\int_{\mathcal M}  \sigma(x)\rho(x)dx.$$

Taking $\delta \to 0,$ the limit system becomes a stochastic Hamiltonian system,
\begin{align}\label{lim-shs}
d \rho_t&=\frac {\delta }{\delta S_t} \mathcal H_0(\rho_t,S_t)dt,\\\nonumber
d S_t&=-\frac {\delta }{\delta \rho_t} \mathcal H_0(\rho_t,S_t)-\frac {\delta}{\delta \rho_t} \mathcal H_1(\rho_t,S_t)\star d\xi+C(t)dt,
\end{align}
where $\xi$ is the limit process of $\xi_{\delta}$ in path-wise sense. We would like to remark that the solution of \eqref{lim-shs} is not predictable in general.  
In our particular case, since $\xi_{\delta}(t)$ is a piecewisely linear Wong-Zakai approximation of $B(t)$, the limit of  \eqref{con-law}, \eqref{cont} is the following system in Stratonovich sense,
\begin{align}\label{lim-whs}
&d\rho_t=-\nabla\cdot (\rho(t,x)v(t,x))dt,\\\nonumber
&dv(t,x)+\nabla v(t,x)\cdot v(t,x)dt=-\nabla f(x)dt-\eta \nabla \sigma(x) \circ dB_t.
\end{align}
We would like to emphasize that the above analysis indicates a principle for deriving the stochastic Hamiltonian system on Wasserstein manifold: {\it The conditional probability density of stochastic Hamiltonian flow in phase space is a stochastic Hamiltonian flow in density manifold almost surely.} 
In the following we always assume that the initial distribution $\rho(0,\cdot)$ of $x_0$ and the initial velocity $v(0,\cdot)$ are smooth and bounded. 

\begin{prop}\label{path-con-rhos}
Suppose that $\mathcal M$ is a d-dimensional compact smooth differential submanifold and $T>0$. Let $g=\mathbb I,$ $v(0,\cdot)$ be a smooth vector field, $f,\sigma$ be smooth function on $\mathcal M$, $\xi_{\delta}$ be the linear interpolation of $B(t)$ with width $\delta$, and that $x_0,p_0$ are $\mathbb F_0$-adapted and possess any finite $q$-moment, $q\in \mathbb N^+$. Then there exists a stopping time  $\tau$ such that there exists a subsequence of   $(\rho^{\delta},v^{\delta})$ which converges in probability to the solution $(\rho,v)$ of \eqref{lim-whs}   before  $\tau.$
\end{prop}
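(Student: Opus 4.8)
The plan is to transfer the particle-level convergence established in Lemma~\ref{mani-wong} and in the Corollary following it to the level of the density and velocity fields, through the smooth reconstruction maps that recover $(\rho,v)$ from the characteristic flow. Fix $\widetilde\omega$ and write $X_t^{\delta}(x_0)=x_t^{\delta}(x_0,v(0,x_0))$ for the characteristic starting at $x_0$ with initial momentum $p_0=v(0,x_0)=\nabla S(0,x_0)$, and similarly $X_t(x_0)=x_t(x_0,v(0,x_0))$ for the limit flow. By construction $\rho_t^{\delta}=X_t^{\delta}\#\rho_0$ and $v^{\delta}(t,X_t^{\delta}(x_0))=p_t^{\delta}(x_0)$, with the analogous identities for the limit system. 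First I would record the two reconstruction formulas that hold as long as the flow is a diffeomorphism, namely the change-of-variables formula $\rho_t(x)=\rho_0\big((X_t)^{-1}(x)\big)/\det D_{x_0}X_t\big((X_t)^{-1}(x)\big)$ and $v(t,x)=p_t\big((X_t)^{-1}(x)\big)$, and check that the resulting fields solve \eqref{lim-whs}.

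Next I would define the stopping time $\tau$ at which the limit flow first degenerates. Let $J_t(x_0)=\det D_{x_0}X_t(x_0)$, so that $J_0\equiv 1$; since $\mathcal M$ is compact and $f,\sigma,v(0,\cdot)$ are smooth, $t\mapsto J_t$ is continuous with values in $C(\mathcal M)$ and adapted, and I would set $\tau=\inf\{t\in(0,T]:\inf_{x_0\in\mathcal M}J_t(x_0)\le \tfrac12\}\wedge T$, which is then a stopping time. For $t<\tau$ the map $X_t$ is a smooth diffeomorphism with a uniform lower bound on its Jacobian, so $(X_t)^{-1}$ is smooth and the reconstruction formulas above produce the smooth solution $(\rho,v)$ of \eqref{lim-whs}; this also supplies the bounds on $\rho$, $v$ and on $(X_t)^{-1}$ used below.

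Then I would transfer the convergence. Lemma~\ref{mani-wong} gives $(x^{\delta},p^{\delta})\to(x,p)$ in probability, uniformly on $[0,T]$, and the Corollary gives convergence in probability of the spatial derivatives $\partial_{x_0}x^{\delta}$ and $\partial_{p_0}x^{\delta}$; together with $p_0=v(0,x_0)$ these yield convergence of $D_{x_0}X_t^{\delta}$. Passing to a subsequence, all of these convergences hold $\mathbb P$-almost surely and, by compactness of $\mathcal M$ together with the equicontinuity in $x_0$ coming from the uniform moment bounds and $f,\sigma\in\mathcal C_p^3$, uniformly in $(t,x_0)\in[0,\tau]\times\mathcal M$. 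Hence $J_t^{\delta}\to J_t$ uniformly before $\tau$, so for $\delta$ small along the subsequence $X_t^{\delta}$ is itself a diffeomorphism with $\inf_{x_0}J_t^{\delta}\ge\tfrac13$ for $t<\tau$; in particular $\tau^{\delta}\ge\tau$ and the reconstruction formulas apply to the approximating system. Feeding the uniform convergences of $X_t^{\delta}$, $(X_t^{\delta})^{-1}$, $J_t^{\delta}$ and $p_t^{\delta}$ into the change-of-variables formula and into $v^{\delta}(t,x)=p_t^{\delta}((X_t^{\delta})^{-1}(x))$ gives $\sup_{t\le\tau}\|\rho_t^{\delta}-\rho_t\|+\sup_{t\le\tau}\|v^{\delta}_t-v_t\|\to 0$ along the subsequence, first almost surely and therefore in probability.

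The main obstacle is controlling the diffeomorphism time uniformly in $\delta$: a priori $\tau^{\delta}$ could shrink to $0$ as $\delta\to0$, which would destroy the push-forward representation of $\rho^{\delta}_t$. The decisive point is therefore to upgrade the merely pointwise-in-$x_0$ derivative convergence supplied by the Corollary to convergence that is \emph{uniform in $x_0$}, so that the Jacobians $J_t^{\delta}$ are bounded below uniformly before $\tau$; this is where compactness of $\mathcal M$, the smoothness $f,\sigma\in\mathcal C_p^3$, and the finite-moment bounds on $(x_0,p_0)$ are used to obtain equicontinuity in $x_0$ of the first-variation processes. A secondary difficulty is the inversion step $v(t,x)=p_t\big((X_t)^{-1}(x)\big)$, which requires stable control of the inverse flow and follows from the inverse function theorem once the uniform lower Jacobian bound is in hand.
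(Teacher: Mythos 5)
Your proposal is correct and follows essentially the same route as the paper: both invoke Lemma~\ref{mani-wong} (and the derivative convergence from the corollary), define $\tau$ as the first time the limit flow fails to be a diffeomorphism, pass to an almost-surely convergent subsequence, secure a uniform lower bound on the Jacobian of the approximate flows before $\tau$, and then transfer convergence to $(\rho^{\delta},v^{\delta})$ via the change-of-variables formula and inversion of the flow. Your explicit attention to upgrading the derivative convergence to uniformity in $x_0$ is a point the paper's proof treats only implicitly, but it is a refinement of the same argument rather than a different one.
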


\begin{proof}
Applying Lemma \ref{mani-wong}, we have that $(x^{\delta}_t,v(t,x_t^{\delta}))$ is convergent to $(x_t,v(t, $ $ x_t))$ in $[0, T]$, a.s., up to a subsequence. 
Define the stopping time $\tau=\inf\{t\in (0,T] |\, x_t \,$ is not smooth diffeomorphism on $ \mathcal M\}.$
For convenience, let us take a subsequence such that for almost $\omega \in \Omega,$ $(x^{\delta}_t,v(t,x_t^{\delta}))$ converges to $(x_t,v(t,x_t))$ and $\frac {\partial}{\partial x_0}x^{\delta}_t(x_0)$ convergences to $\frac {\partial}{\partial x_0}x_t(x_0).$ Before $\tau(\omega),$ there exists $\alpha >0$ such that $\det (\frac {\partial}{\partial x_0}x_t^{-1}(x_0))>\alpha.$ The pathwise convergence of $x^{\delta}$ implies that for any $\epsilon>0$ there exists $\delta_0=\delta(\epsilon,\omega)>0$ such that when $\delta \le \delta_0$, 
$\det (\frac {\partial}{\partial x_0}(x_t^{\delta})^{-1}(x_0))>\alpha-\epsilon>0.$
Notice that the density function $\rho^\delta(t,y)$ of $x^{\delta}_t$ satisfies $\rho^{\delta}(t,y)=|\det(\nabla x^{\delta}_t(y))|\rho(0,x^{\delta}_t(y)).$ Since $\rho(0,\cdot)$ is smooth for any fixed $\omega$ and the pathwise convergence of $x^{\delta}$ holds, it follows that $\rho^{\delta}(t,y)$ converges to 
the density function of $x_t$, which is $\rho(t,y)=|\det( \nabla x_t(y))|\rho(0,x_t(y)).$
Similarly, the pathwise convergence of $v^{\delta}(t,x^{\delta}_t(y))$ to $v(t,x_t(y)),$
together with invertibility of $x^{\delta}_t$ and $x_t$, implies the convergence of $v^{\delta}(t,x)$ to $v(t,x).$ 
Consequently, the solution of $(\rho^{\delta},v^{\delta})$ is convergent to $(\rho, v)$ in pathwise sense up to a subsequence.
\end{proof}

\subsection{Vlasov equation}
We would like to present the connections and differences between the classic Vlasov equation and stochastic Wasserstein Hamiltonian flow in this part.
For simplicity, let us consider the case that $\mathcal M=\mathbb R^d.$
We fix $\widetilde \omega\in \widetilde \Omega$, and consider \eqref{inhs}.
Taking differential on $\E_{\Omega}[\phi(x_t^{\delta},p_t^{\delta})]$ where $\phi$ is a sufficient smooth test function,  we get 
\begin{align*}
\frac{d}{dt} \E_{\Omega}[\phi(x_t^{\delta},p_t^{\delta})]
&=\E_{\Omega}[\nabla_x \phi(x_t^{\delta},p_t^{\delta})\frac{d}{dt}  x^{\delta}_t+\nabla_p \phi (x_t^{\delta},p_t^{\delta})\frac{d}{dt}  p^{\delta}_t]\\
&=\E_{\Omega}[\nabla_x \phi(x_t^{\delta},p_t^{\delta})p_t+\nabla_p \phi (x_t,p_t)(-\nabla_x f(x_t^{\delta})-\eta \nabla_x \sigma(x_t^{\delta}) \dot \xi_{\delta })].
\end{align*}
Denoting the initial joint probability density function by $F_0(x,p)$, it
 holds that
\begin{align*}
&\frac{d}{dt}  \int_{\mathbb R^d \times \mathbb R^d} \phi(x_t^{\delta},p_t^{\delta})F_0(x,p) dxdp\\
&= \int_{\mathbb R^d \times \mathbb R^d} \Big(\nabla_x \phi(x_t^{\delta },p_t^{\delta}) p^{\delta}_t +\nabla_p \phi (x_t^{\delta},p_t^{\delta}) (-\nabla_x f(x_t^{\delta})-\eta \nabla_x \sigma(x_t^{\delta}) \dot \xi_{\delta})\Big)F_0(x,p) dxdp
\end{align*}
Thus the joint distribution on $\Omega$,  $F_t^{\delta}=(x_t^{\delta},p_t^{\delta})^{\#}F_0$, satisfies 
\begin{align*}
&\int_{\mathbb R^d \times \mathbb R^d} \phi(x,p) \frac{d}{dt}  F_t^{\delta}(x,p) dxdp\\
&=\int_{\mathbb R^d \times \mathbb R^d} \Big(\nabla_x \phi(x,p) p+\nabla_p \phi (x,p) (-\nabla_x f(x)\Big)F_t(x,p) dxdp\\
&+ \E_{\Omega}[\nabla_p \phi (x_t^{\delta},p_t^{\delta}) (-\eta \nabla_x \sigma(x_t^{\delta}))\dot \xi_{\delta}(t)\Big]. 
\end{align*} 


Notice that the solution process $x^{\delta}_{t}$ is $\mathbb F_{t_{k+1}}$-measurable when $t\in (t_k,t_{k+1}], t_k=k\delta t$ and $\mathbb F_{t_k}$-measurable when $t=t_k$, and  $x_{t}$ is $\mathbb F_{t}$-measurable. 
By applying the chain rule, we have that for $t\in (t_k,t_{k+1}],$
\begin{align*}
&\int_0^t \E_{\Omega}[\nabla_p \phi (x_s^{\delta},p_s^{\delta}) (-\eta \nabla_x \sigma(x_s^{\delta}))\dot \xi_{\delta}(s)\Big]ds\\
&=\sum_{j=0}^{k-1} \int_{t_j}^{t_{j+1}} \E_{\Omega}[\nabla_p \phi (x_s^{\delta},p_s^{\delta}) (-\eta \nabla_x \sigma(x_s^{\delta}))\dot \xi_{\delta}(s)\Big]ds\\
&+\int_{t_k}^t\E_{\Omega}[\nabla_p \phi (x_s^{\delta},p_s^{\delta}) (-\eta \nabla_x \sigma(x_s^{\delta}))\dot \xi_{\delta}(s)\Big]ds\\
&=\sum_{j=0}^{k-1} \int_{t_j}^{t_{j+1}} \E_{\Omega}[\nabla_p \phi (x^{\delta}_{t_j},p^{\delta}_{t_j}) (-\eta \nabla_x \sigma(x^{\delta}_{t_j}))\frac {B_{t_{j+1}}-B_{t_j}}{\delta}\Big]ds\\
&+\sum_{j=0}^{k-1} \int_{t_j}^{t_{j+1}} \E_{\Omega}\Big[ \Big(\nabla_p \phi (x_s^{\delta},p_s^{\delta}) (-\eta \nabla_x \sigma(x_s))-\nabla_p \phi (x_{t_j}^{\delta},p_{t_j}^{\delta}) (-\eta \nabla_x \sigma(x_{t_j}^{\delta}))\Big)\frac {B_{t_{j+1}}-B_{t_j}}{\delta}\Big]ds \\
&+\int_{t_k}^t\E_{\Omega}[\nabla_p \phi (x_{t_k}^{\delta},p_{t_k}^{\delta}) (-\eta \nabla_x \sigma(x^{\delta}_{t_k}))\frac {B_{t_{k+1}}-B_{t_k}}{\delta} \Big]ds\\
&+\int_{t_k}^t\E_{\Omega}[\Big(\nabla_p \phi (x^{\delta}_{s},p^{\delta}_{s}) (-\eta \nabla_x \sigma(x_s^{\delta})) -\nabla_p \phi (x^{\delta}_{t_k},p^{\delta}_{t_k}) (-\eta \nabla_x \sigma(X^{\delta}_s))\Big)\frac {B_{t_{k+1}}-B_{t_k}}{\delta} \Big]ds
\end{align*}

Then repeating similar arguments in the proof of Lemma \ref{rd-won},  we have that 
\begin{align*}
&\int_0^t \E_{\Omega}[\nabla_p \phi (x_t^{\delta},p_t^{\delta}) (-\eta \nabla_x \sigma(X_t^{\delta}))\dot \xi_{\delta}(t)\Big]ds\\
&=  \int_0^t \E_{\Omega}[\nabla_p \phi (x^{\delta}_{[t]_{\delta} \delta },p^{\delta}_{[t]_{\delta} \delta}) (-\eta \nabla_x \sigma(x^{\delta}_{[t]_{\delta} \delta}))\dot \xi_{\delta}(t)\Big]ds\\
&+\int_0^t \frac 12\E_{\Omega}[(\Delta_{pp} \phi (X^{\delta}_{[t]_{\delta} \delta },p_{[t]_{\delta} \delta})(-\eta \nabla_x \sigma(x^{\delta}_{[t]_{\delta} \delta}) )(-\eta \nabla_x \sigma(x^{\delta}_{[t]_{\delta} \delta})) (\dot \xi_{\delta}(t))^2\Big] ds\\
&+ o(\delta^{\beta}),
\end{align*}
where $\beta \in (0,\frac 12)$. 
Taking $\delta \to 0$ yield that the second order Vlasov equation 
\begin{align*}
\partial_t F(t,x,p)&=-\nabla_x\cdot (F(t,x,p)\frac {\partial H_0}{\partial p})+\nabla_p\cdot (F(t,x,p)\frac {\partial H_0}{\partial x})\\
&+\frac 12 \Delta_{pp}F(t,x,p)\cdot (\frac {\partial H_1}{\partial x},\frac {\partial H_1}{\partial x}).
\end{align*} 
This implies that when we consider the joint distribution on $\Omega$, the density function satisfies the second order Vlasov equation. However, when we consider the conditional probability on $\widetilde \Omega$ instead of $ \Omega,$ the conditional joint probability of Wong--Zakai approximation satisfies the following first order Vlasov equation, 
{
\begin{align*}
\partial_t F^{\delta}(t,x,p)&=-\nabla_x\cdot (F^{\delta}(t,x,p)\frac {\partial H_0}{\partial p})+\nabla_p\cdot (F^{\delta}(t,x,p)\frac {\partial H_0}{\partial x})\\
&+\nabla_p\cdot  (F^{\delta}(t,x,p)\frac {\partial H_1}{\partial x})\dot \xi_{\delta}.
\end{align*}
}
Its limit equation becomes 
\begin{align*}
d F(t,x,p)&=-\nabla_x\cdot (F(t,x,p)\frac {\partial H_0}{\partial p})dt+\nabla_p\cdot (F(t,x,p)\frac {\partial H_0}{\partial x})dt\\
&+\nabla_p\cdot  (F(t,x,p)\frac {\partial H_1}{\partial x})\circ dB_t.
\end{align*}

\subsection{Stochastic Euler-Lagrange equation in density space} \label{subsec:SEL}

In this section, we consider the Wasserstein Hamiltonian flow with random perturbation, i.e., the second order stochastic Euler-Lagrange equation from the Lagrange functional on density manifold.  The density space $\mathcal P(\mathcal M)$ is defined by 
\begin{align*}
    \mathcal P(\mathcal M)=\{\rho d vol_{\mathcal M}| \rho \in \mathcal C^{\infty}(\mathcal M), \rho\ge 0,\int_{\mathcal M}\rho dvol_{\mathcal M}=1\}.
\end{align*}
Its interior of $\mathcal P(\mathcal M)$ is denoted by $\mathcal P_{o}(\mathcal M).$ The tangent space at $\rho\in \mathcal P_{o}(\mathcal M)$ is defined by 
\begin{align*}
T_{\rho} \mathcal P_{o}(\mathcal M)=\{\kappa \in \mathcal C^{\infty}(\mathcal M)|\int_{\mathcal M} \kappa dvol_{\mathcal M}=0 \}.
\end{align*}
Define the quotient space of smooth functions $\mathscr F(\mathcal M)/\mathbb R=\{[\Phi]| \Phi\in \mathcal C^{\infty}(\mathcal M)\}$,
where $[\Phi]=\{\Phi+c|c\in \mathbb R\}.$ Then one could identify the element in  $\mathscr F(\mathcal M)/\mathbb R$ as the tangent vector in $T_{\rho}\mathcal P_{o}(\mathcal M)$ by using the map $\Theta: \mathscr F(\mathcal M)/\mathbb R \to T_{\rho}\mathcal P_{o}(\mathcal M), \; \Theta_{\Phi}=-\nabla \cdot (\rho \nabla \Phi).$ The boundaryless condition of $\mathcal M$ and the property of elliptical operator $\Delta_{\rho}(\cdot)=-\nabla \cdot (\rho \nabla (\cdot))$ ensures that $\Psi$ is one to one and linear. This implies that $\mathscr F(\mathcal M)/{\mathbb R}\cong T_{\rho}^* \mathcal P_{o}(\mathcal M)$, where $T_{\rho}^* \mathcal P_{o}(\mathcal M)$ is the cotangent space of $\mathcal P_{o}(\mathcal M)$.
$L^2$-Wasserstein metric on density manifold 
$g_W: T_{\rho}\mathcal P(\mathcal M)\times T_{\rho}\mathcal P(\mathcal M) \to \mathbb R$ is define by 
\begin{align*}
g_W(\kappa_1,\kappa_2)=\int_{\mathcal M} \<\nabla \Phi_1,\nabla \Phi_2\>\rho dvol_{\mathcal M}=\int_{\mathcal M} \kappa_1 (-\Delta_{\rho})^{\dagger} \kappa_2 dvol_{\mathcal M},
\end{align*}
where $\kappa_1=\Theta_{\Phi_1}$, $\kappa_2=\Theta_{\Phi_2},$ and $(-\Delta_{\rho})^{\dagger}$ is the pseudo inverse operator of $-\Delta_{\rho}.$ 
In deterministic case, it is known that 
the critical point of the Wasserstein metric $$\frac 12 W^2(\rho_0,\rho^1):=\inf_{\rho_t \in \mathcal P_o(\mathcal M)}\Big\{\int_0^1\int_{\mathcal M} \frac 12 g_W(\partial_t\rho_t, \partial \rho_t)dvol_{\mathcal M}dt \Big\}$$  satisfies the geodesic equation in cotangent bundle on density manifold (see e.g. \cite{CLZ20a}), that is,
 \begin{align*}
 &\partial_t\rho_t = -\nabla \cdot(\rho_t \nabla\Phi_t ),\;\\
 &\partial_t \Phi_t = -\frac 12 |\nabla \Phi_t|^2+C_t,
 \end{align*}
 where $\Phi_t=(-\Delta_{\rho_t})^{\dagger}\partial_t \rho_t,$ $C_t$ is independent of $x\in \mathcal M $. 
 The above geodesic equation in primal coordinates is the Euler--Lagrange equation,  
 \begin{align*}
\partial_t \frac {\delta}{\delta\partial_t \rho_t}\mathcal L(\rho_t,\partial_t \rho_t)=\frac {\delta}{\delta \rho_t} \mathcal L(\rho_t,\partial_t \rho_t)+C(t),
 \end{align*}
 where $\mathcal L(\rho_t,\partial_t\rho_t)=\frac 12g_W(\partial_t \rho_t,\partial_t \rho_t).$
 
Next, we consider the Lagrangian in density manifold with random perturbation, 
 \begin{align*}
   \mathcal L(\rho_t,\partial_t \rho_t)=\frac 12g_W(\partial_t \rho_t,\partial_t \rho_t)-\mathcal F(\rho_t)- \Sigma(\rho_t)\dot \xi_{\delta}(t), 
 \end{align*}
 and its variational problem $I_{\delta}(\rho^0,\rho^T)=\inf\limits_{\rho_t}\{\int_0^T\mathcal L(\rho_t,\partial_t \rho_t)dt| \rho_0=\rho^0,\rho_T=\rho^T\}.$
 
 \begin{tm}\label{tm-wong}
 The Euler Lagrangian equation of the variational problem $I_{\delta}(\rho^0,\rho^T)$ satisfies 
 \begin{align}\label{eul-lag}
 \partial_{tt}\rho_t+\Gamma_W(\partial_t\rho_t, \partial_t\rho_t)=-grad_W \mathcal F(\rho_t)-grad_W \Sigma (\rho_t) \dot \xi_{\delta},
 \end{align}
 where $grad_W \mathcal F(\rho_t)=-\nabla\cdot(\rho_t\nabla \frac {\delta}{\delta \rho_t}\mathcal F(\rho_t)),$ 
 $\Gamma_W(\partial_t\rho_t, \partial_t\rho_t)=\Delta_{\partial_t \rho_t} (-\Delta_{\rho_t})^\dagger\partial_t \rho_t 
 +\frac 12 \Delta_{\rho_t}|\nabla (-\Delta_{\rho_t})^\dagger\rho_t|^2.$
 Furthermore, Eq. \eqref{eul-lag} can be formulated as the following Hamiltonian system
 \begin{align}\label{hs-rhos}
& \partial_t \rho_t+\nabla \cdot (\rho_t\nabla \Phi_t)=0,\\\nonumber
 &\partial_t \Phi_t+\frac 12|\nabla \Phi_t|^2=-\frac {\delta }{\delta \rho_t} \mathcal F(\rho_t)-\frac {\delta }{\delta \rho_t} \Sigma(\rho_t)\dot \xi_{\delta},
 \end{align}
 where $\Phi_t=(-\Delta_{\rho_t})^{\dagger}\partial_t \rho_t$ up to a spatially constant stochastic process shift. 
 \end{tm}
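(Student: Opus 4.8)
The plan is to solve the variational problem $I_\delta(\rho^0,\rho^T)$ pathwise. For a fixed $\omega\in\Omega$ the Wong--Zakai increment $\dot\xi_\delta(t)$ is a deterministic (piecewise constant) function of $t$, so $\mathcal L(\rho_t,\partial_t\rho_t)$ is an ordinary, if time-inhomogeneous, Lagrangian on the density manifold and the classical first-variation calculus applies. Writing admissible curves as $\rho_t^\epsilon=\rho_t+\epsilon\kappa_t$ with $\kappa_t\in T_{\rho_t}\mathcal P_o(\mathcal M)$ mean-zero and $\kappa_0=\kappa_T=0$, I would set $\frac{d}{d\epsilon}\big|_{\epsilon=0}\int_0^T\mathcal L\,dt=0$ and integrate by parts in $t$ to obtain the Euler--Lagrange identity $\partial_t\frac{\delta\mathcal L}{\delta\partial_t\rho_t}=\frac{\delta\mathcal L}{\delta\rho_t}$, valid on each interval where $\dot\xi_\delta$ is constant and, by continuity of $\rho_t$ across the breakpoints, on all of $[0,T]$.

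The core computation is the variation of the kinetic term $K(\rho,\partial_t\rho)=\frac12 g_W(\partial_t\rho,\partial_t\rho)=\frac12\int_{\mathcal M}\partial_t\rho\,(-\Delta_\rho)^\dagger\partial_t\rho\,dvol_{\mathcal M}$, since the metric itself depends on $\rho$ through the pseudo-inverse. The derivative in the velocity slot is immediate, $\frac{\delta K}{\delta\partial_t\rho}=(-\Delta_\rho)^\dagger\partial_t\rho=:\Phi$, which I adopt as the dual (momentum) coordinate. For the derivative in $\rho$ at frozen $\partial_t\rho$, I would differentiate the identity $(-\Delta_\rho)(-\Delta_\rho)^\dagger=\mathrm{Id}$ on mean-zero functions to get $\frac{\delta}{\delta\rho}(-\Delta_\rho)^\dagger=-(-\Delta_\rho)^\dagger\big[\tfrac{\delta}{\delta\rho}(-\Delta_\rho)\big](-\Delta_\rho)^\dagger$; pairing against $\partial_t\rho$, using $\big[\tfrac{\delta}{\delta\rho}(-\Delta_\rho)\big]\Phi\cdot\delta\rho=-\nabla\cdot(\delta\rho\,\nabla\Phi)$ together with self-adjointness of $(-\Delta_\rho)^\dagger$, collapses everything to $\frac{\delta K}{\delta\rho}\big|_{\partial_t\rho}=-\frac12|\nabla\Phi|^2$. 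Feeding $\frac{\delta\mathcal L}{\delta\rho}=-\frac12|\nabla\Phi|^2-\frac{\delta}{\delta\rho}\mathcal F-\frac{\delta}{\delta\rho}\Sigma\,\dot\xi_\delta$ and $\frac{\delta\mathcal L}{\delta\partial_t\rho}=\Phi$ into the Euler--Lagrange identity produces exactly the second line of \eqref{hs-rhos}, while the first line is the defining relation $\partial_t\rho_t=-\nabla\cdot(\rho_t\nabla\Phi_t)$. Since $\Phi$ lives in $\mathscr F(\mathcal M)/\mathbb R$, both $\Phi_t$ and the equation for $\partial_t\Phi_t$ are pinned down only up to the spatially constant process $C(t)$, which I carry along explicitly.

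To pass to the second-order form \eqref{eul-lag}, I would differentiate the continuity equation $\partial_t\rho_t=-\nabla\cdot(\rho_t\nabla\Phi_t)$ once more in time, giving $\partial_{tt}\rho_t=-\nabla\cdot(\partial_t\rho_t\,\nabla\Phi_t)-\nabla\cdot(\rho_t\nabla\partial_t\Phi_t)$, and then substitute $\partial_t\Phi_t$ from the Hamiltonian equation just derived. The first term is $\Delta_{\partial_t\rho_t}(-\Delta_{\rho_t})^\dagger\partial_t\rho_t$; the $-\frac12|\nabla\Phi_t|^2$ piece of $\partial_t\Phi_t$ contributes $\frac12\Delta_{\rho_t}|\nabla\Phi_t|^2$, and these two combine into $\Gamma_W(\partial_t\rho_t,\partial_t\rho_t)$. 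The remaining pieces $-\nabla\cdot(\rho_t\nabla\frac{\delta}{\delta\rho_t}\mathcal F)$ and $-\nabla\cdot(\rho_t\nabla\frac{\delta}{\delta\rho_t}\Sigma)\,\dot\xi_\delta$ are precisely $-grad_W\mathcal F(\rho_t)$ and $-grad_W\Sigma(\rho_t)\,\dot\xi_\delta$ by the stated definition of the Wasserstein gradient, and the constant contributes $\nabla C(t)=0$. Collecting the terms yields \eqref{eul-lag} and, together with the previous paragraph, the claimed equivalence with \eqref{hs-rhos}.

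The main obstacle is the middle step: correctly differentiating the $\rho$-dependent pseudo-inverse $(-\Delta_\rho)^\dagger$ and verifying that the extra term produced by the moving metric is exactly the Christoffel contribution entering $\Gamma_W$. This requires care because $(-\Delta_\rho)^\dagger$ is defined only on the mean-zero subspace and its variation must be read inside the quotient $\mathscr F(\mathcal M)/\mathbb R$; one must check that the perturbations $\kappa_t$ remain tangent (mean-zero) so that every integration by parts on the boundaryless $\mathcal M$ carries no boundary term, and that the arbitrary constant $C(t)$ never obstructs the identification of $\Gamma_W$ and $grad_W$. The time-irregularity of $\dot\xi_\delta$ is comparatively benign here, since for fixed $\omega$ it is handled interval by interval and the endpoint conditions $\kappa_0=\kappa_T=0$ suppress the usual boundary contributions in $t$.
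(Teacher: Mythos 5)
Your proposal is correct and follows essentially the same route as the paper: a first variation with mean-zero perturbations vanishing at the endpoints, the computation $\frac{\delta K}{\delta\partial_t\rho}=(-\Delta_\rho)^{\dagger}\partial_t\rho$ and $\frac{\delta K}{\delta\rho}\big|_{\partial_t\rho}=-\frac12|\nabla\Phi|^2$ for the $\rho$-dependent metric, and the identification of the Christoffel term. The only (immaterial) difference is ordering: the paper obtains the second-order equation \eqref{eul-lag} first and then Legendre-transforms to \eqref{hs-rhos}, whereas you derive the Hamiltonian system first and recover \eqref{eul-lag} by differentiating the continuity equation in time.
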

 
 \begin{proof}
Consider a smooth perturbation $\epsilon h_t$ satisfying  $\int_{\mathcal M}h_tdvol_{\mathcal M}=0,$ $t\in [0,T]$ and $h_0=h_T=0.$ 
Applying Taylor expansion with respect $\epsilon$ and integration by parts, using $h_0=h_T=0$ and the fact that $\mathcal M$ is compact, we get 
\begin{align*}
&\int_{0}^T\mathcal L(\rho_t+\epsilon h_t, \partial_t\rho_t+\epsilon \partial_t h_t)dt  \\
&= \int_{0}^T\mathcal L(\rho_t, \partial_t\rho_t)dt
+\epsilon \int_0^T\int_{\mathcal M}(\frac {\delta}{\delta \rho_t} \mathcal L(\rho_t,\partial_t\rho_t)-\partial_t\frac {\delta}{\delta \partial_t \rho_t} \mathcal L(\rho_t,\partial_t\rho_t))\cdot h_t dvol_{\mathcal M}dt+o(\epsilon).
\end{align*}
Direct calculations lead to 
\begin{align*}
&\partial_t \frac {\delta }{\delta \partial_t \rho_t}\mathcal L(\rho_t,\dot \rho_t)=\partial_t((-\Delta_{\rho_t})^{\dagger}\partial_t \rho_t)\\
&\qquad \qquad\qquad \quad = (-\Delta_{\rho_t})^{\dagger}\partial_{tt}\rho_t-(-\Delta_{\rho_t})^{\dagger}(-\Delta_{\partial_t \rho_t} )^{{\dagger}}\partial_t \rho_t,\\
&\frac{\delta}{\delta \rho_t}\mathcal L(\rho_t,\dot \rho_t)= -\frac 12\nabla |(-\Delta_{\rho_t}^{\dagger})\partial_t \rho_t|^2-\frac {\delta }{\delta \rho_t} \mathcal F(\rho_t)-\frac {\delta }{\delta \rho_t} \Sigma(\rho_t)\dot \xi_{\delta}(t),
\end{align*}
which, together with the property $\int_{\mathcal M}h_tdvol_{\mathcal M}=0,$ yields  \eqref{eul-lag} up to a spatially-constant stochastic process shift by multiplying $\Delta_{\rho_t}$ on both sides. By introducing the Legendre transformation $\Phi_t=(-\Delta _{\rho_t})^{\dagger}\partial \rho_t,$ we obtain Eq. \eqref{hs-rhos} from Eq. \eqref{eul-lag}. 

\end{proof}

\begin{prop}\label{prop-sto}
The Euler--Lagrange equation of the variational problem $I(\rho^0,\rho^T)$,
\begin{equation*}
  I(\rho_0, \rho_T) = \int_0^T (\frac 12g_W(\partial_t \rho_t,\partial_t \rho_t)-\mathcal F(\rho_t))dt- \int_0^T \Sigma(\rho_t)\circ dB(t) 
\end{equation*}
satisfies 
 \begin{align}\label{eul-lag-sto}
 \partial_{tt}\rho_t+\Gamma_W(\partial_t\rho_t, \partial_t\rho_t)=-grad_W \mathcal F(\rho_t)-grad_W \Sigma (\rho_t) \circ dB_t,
 \end{align}
 where  $\rho_t$ is $\mathbb F_t$-measurable.
 Furthermore, Eq. \eqref{eul-lag-sto} can be formulated as the following  Hamiltonian system
 \begin{align}\label{hs-rhos-sto0}
& \partial_t \rho_t+\nabla \cdot (\rho_t\nabla \Phi_t)=0,\\\nonumber
 &\partial_t \Phi_t+\frac 12|\nabla \Phi_t|^2=-\frac {\delta }{\delta \rho_t} \mathcal F(\rho_t)-\frac {\delta }{\delta \rho_t} \Sigma(\rho_t) \circ dB_t,
 \end{align}
 where $\Phi_t=(-\Delta_{\rho_t})^{\dagger}\partial_t \rho_t$ up to a spatially constant stochastic process shift. 
\end{prop}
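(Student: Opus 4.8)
The plan is to mirror the variational computation carried out in the proof of Theorem \ref{tm-wong}, now interpreting every time integral in the Stratonovich sense, and to justify the passage to the stochastic limit through the Wong--Zakai approximation. The guiding principle is that the Stratonovich integral obeys the ordinary chain rule and the usual integration-by-parts formula in time, so the first-variation calculation is formally identical to the deterministic one; only its rigorous interpretation requires extra care.

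First I would fix a smooth perturbation $\epsilon h_t$ with $\int_{\mathcal M} h_t\, dvol_{\mathcal M}=0$ for each $t$ and $h_0=h_T=0$, and compute the G\^{a}teaux derivative of $I(\rho^0,\rho^T)$ at $\rho_t$. The contributions of $\frac 12 g_W(\partial_t\rho_t,\partial_t\rho_t)$ and $\mathcal F(\rho_t)$ are treated exactly as in Theorem \ref{tm-wong}, so the identities for $\partial_t\frac{\delta}{\delta\partial_t\rho_t}\mathcal L$ and $\frac{\delta}{\delta\rho_t}\mathcal L$ carry over verbatim. For the stochastic term $\int_0^T\Sigma(\rho_t)\circ dB_t$, because Stratonovich calculus respects the chain rule, its first variation is $\int_0^T\big(\int_{\mathcal M}\frac{\delta}{\delta\rho_t}\Sigma(\rho_t)\, h_t\, dvol_{\mathcal M}\big)\circ dB_t$, with no boundary contribution since $h_0=h_T=0$. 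Setting the total variation to zero for all admissible $h_t$, and multiplying by $\Delta_{\rho_t}$ to remove the spatially-constant shift, yields \eqref{eul-lag-sto}. Introducing the Legendre transform $\Phi_t=(-\Delta_{\rho_t})^{\dagger}\partial_t\rho_t$ then converts \eqref{eul-lag-sto} into the Hamiltonian system \eqref{hs-rhos-sto0}, again up to a spatially-constant stochastic process shift.

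To make the stochastic variational calculus rigorous I would realize $B$ as the pathwise limit of its piecewise-linear Wong--Zakai approximation $\xi_\delta$. For each $\delta$, Theorem \ref{tm-wong} already identifies the critical point of $I_\delta(\rho^0,\rho^T)$ with the solution of \eqref{hs-rhos}. I would then invoke the convergence of Proposition \ref{path-con-rhos}: along a subsequence, $(\rho^\delta,\Phi^\delta)$ converges pathwise, before the stopping time $\tau$, to $(\rho,\Phi)$, while the driving term $\frac{\delta}{\delta\rho_t}\Sigma(\rho_t)\dot\xi_\delta$ converges to $\frac{\delta}{\delta\rho_t}\Sigma(\rho_t)\circ dB_t$ exactly as in the finite-dimensional Wong--Zakai limit of Lemma \ref{rd-won}. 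This simultaneously produces a solution of \eqref{hs-rhos-sto0} and pins down the Stratonovich (rather than It\^o) interpretation of the noise, and the $\mathbb F_t$-measurability of $\rho_t$ is inherited from the adaptedness of the limiting solution.

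The main obstacle is the rigorous handling of the first variation of the Stratonovich term in combination with the non-adaptedness inherent in the two-point boundary formulation: for fixed endpoints $\rho_0=\rho^0,\ \rho_T=\rho^T$, the critical path need not be adapted, so a priori $\int_0^T\Sigma(\rho_t)\circ dB_t$ lies outside the standard adapted Stratonovich theory, and one cannot naively interchange the variation with the stochastic integral. The cleanest way around this is precisely the Wong--Zakai route: at the level of $\xi_\delta$ every integral is an ordinary Riemann--Stieltjes integral, the variation is classical, and the stochastic integral is recovered only in the limit, so no anticipating stochastic integral is ever manipulated directly.
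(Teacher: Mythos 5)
Your proposal is correct and follows essentially the same route as the paper: a first-variation computation identical to that of Theorem \ref{tm-wong}, with the Stratonovich term expanded via the chain rule (the paper phrases this as using the It\^o--Stratonovich equivalence) and the Legendre transform $\Phi_t=(-\Delta_{\rho_t})^{\dagger}\partial_t\rho_t$ converting the Euler--Lagrange equation into the Hamiltonian system. The additional Wong--Zakai limiting argument you sketch is not part of the paper's proof of this proposition, but it is consistent with the surrounding results (Theorem \ref{tm-wong}, Proposition \ref{path-con-rhos}) and only adds a justification layer rather than changing the approach.
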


\begin{proof}
 Consider a smooth perturbation $\epsilon h_t$ satisfying  $\int_{\mathcal M}h_tdvol_{\mathcal M}=0,$ $t\in [0,T]$ and $h_0=h_T=0.$ 
 Notice that there exists $\Phi_t=(-\Delta_{\rho})^{\dagger}\partial_t \rho_t.$
 Using the equivalence of stochastic integral between It\^o sense and Stratonovich sense {(see e.g. \cite{MR1214374})}, we have that 
 \begin{align*}
 &\int_{0}^T\frac 12g_W(\partial_t\rho_t+\epsilon h_t,\partial_t\rho_t+\epsilon h_t)-\mathcal F(\rho_t+\epsilon h_t)dt-\int_0^T \Sigma(\rho_t+\epsilon h_t)dB_t\\
 &=\int_0^T \mathcal L_0(\rho_t,\partial_t \rho_t)dt + \int_0^T\Sigma(\rho_t) dB_t \\ 
 &+\epsilon \int_0^T\int_{\mathcal M}(\frac {\delta}{\delta \rho_t} \mathcal L_0(\rho_t,\partial_t\rho_t)-\partial_t\frac {\delta}{\delta \partial_t \rho_t} \mathcal L_0(\rho_t,\partial_t\rho_t))\cdot h_t dvol_{\mathcal M}dt\\
 &+\epsilon \int_0^T \int_{\mathcal M} \frac {\delta}{\delta \rho_t} \Sigma(\rho_t) \cdot h_t dvol_{\mathcal M} dB_t+o(\epsilon).
 \end{align*}
Similar to the proof of Theorem \ref{tm-wong}, we obtain \eqref{eul-lag-sto}
and its equivalent Hamiltonian system \eqref{hs-rhos-sto0}.
\end{proof}



\subsection{Generalized stochastic Wassersetin--Hamiltonian flow}

 In the last section, we show that the density of a Hamiltonian ODE with random perturbation satisfies the stochastic Wasserstein Hamiltonian flow. In this section, We derived the stochastic Wasserstein Hamiltonian flow via the random perturbation in the dual coordinates in density space. 
It provides a more general framework that can derive a large class of stochastic Wasserstein Hamiltonian flows which can not be obtained from the classic dynamics with perturations.

We introduce the following variational problem
 \begin{align}\label{gen-var-pri}
      I_{\delta}(\rho^0,\rho^T)=\inf\{\mathcal S(\rho_t,\Phi_t)| (-\Delta_{\rho_t})^{\dagger}\Phi_t \in \mathcal T_{\rho_t} \mathcal P_{o}(\mathcal M),\rho(0)=\rho^0, \rho(T)=\rho^T\}
 \end{align}
whose action functional is given by the dual coordinates, 
\begin{align*}
\mathcal S(\rho_t,\Phi_t)&=-\int_0^T \< \Phi(t),\partial_t \rho_t\>-\mathcal H_0(\rho_t, \Phi_t) dt +\int_0^T \mathcal H_1(\rho_t,\Phi_t)d\xi_{\delta}(t).
\end{align*}
Here $\mathcal H_0(\rho_t, \Phi_t)=\int_{\mathcal M}\frac 12 |\nabla \Phi_t|^2\rho_t d vol_{\mathcal M}+\mathcal F(\rho_t)$, $\mathcal H_1(\rho_t,\Phi_t)=\eta \int_{\mathcal M}\frac 12 |\nabla \Phi_t|^2\rho_t d vol_{\mathcal M}+\eta \Sigma (\rho_t)$, $\mathcal F$ and $\Sigma$ are smooth potential functions.

\begin{tm}\label{tm-sto-mix}
 The critical point of the variational problem $I_{\delta}(\rho^0,\rho^T)$ satisfies 
 the following  Hamiltonian system
 \begin{align}\label{hs-rhos-mix}
& \partial_t \rho_t+\nabla \cdot (\rho_t\nabla \Phi_t)+\eta \nabla \cdot (\rho_t\nabla \Phi_t)\dot \xi_{\delta}=0,\\\nonumber
 &\partial_t \Phi_t+\frac 12|\nabla \Phi_t|^2+\eta \frac 12|\nabla \Phi_t|^2\dot \xi_{\delta}=-\frac {\delta }{\delta \rho_t} \mathcal F(\rho_t)-\eta \frac {\delta }{\delta \rho_t} \Sigma(\rho_t)\dot \xi_{\delta},
 \end{align}
 where $(1+\dot \xi_{\delta}(t))\Phi_t=(-\Delta_{\rho_t})^{\dagger}\partial_t \rho_t$ up to a spatially constant stochastic process shift. 
\end{tm}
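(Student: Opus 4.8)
The plan is to exploit the fact that, since $\xi_\delta$ is the piecewise-linear Wong--Zakai approximation, for each fixed $\omega$ its derivative $\dot\xi_\delta$ is an honest bounded, piecewise-continuous function of $t$. Hence $\int_0^T\mathcal H_1\,d\xi_\delta=\int_0^T\mathcal H_1(\rho_t,\Phi_t)\dot\xi_\delta(t)\,dt$ and the action is the deterministic (pathwise) functional
\[
\mathcal S(\rho,\Phi)=\int_0^T\Big[-\langle\Phi_t,\partial_t\rho_t\rangle+\mathcal H_0(\rho_t,\Phi_t)+\mathcal H_1(\rho_t,\Phi_t)\dot\xi_\delta(t)\Big]\,dt,
\]
which is exactly the phase-space (Hamiltonian) action attached to the time-dependent total Hamiltonian $\mathcal H_{\mathrm{tot}}:=\mathcal H_0+\mathcal H_1\dot\xi_\delta$, with $\rho$ the configuration variable and $\Phi$ the dual variable. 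For each $\omega$ I would then apply the classical first-variation principle, differentiating $\mathcal S$ along independent perturbations of $\Phi$ and of $\rho$.

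First I would vary $\Phi_t\mapsto\Phi_t+\epsilon\phi_t$ with $\phi_t\in\mathscr F(\mathcal M)/\mathbb R$ free (no endpoint constraint on $\Phi$ is needed). Using $\frac{\delta}{\delta\Phi}\int_{\mathcal M}\frac12|\nabla\Phi|^2\rho\,dvol_{\mathcal M}=-\nabla\cdot(\rho\nabla\Phi)$, which follows from a spatial integration by parts carrying no boundary term because $\mathcal M$ is compact and boundaryless, the vanishing of $\frac{d}{d\epsilon}\big|_{\epsilon=0}\mathcal S$ for all $\phi_t$ gives $\partial_t\rho_t=\frac{\delta\mathcal H_{\mathrm{tot}}}{\delta\Phi}=-(1+\eta\dot\xi_\delta)\nabla\cdot(\rho_t\nabla\Phi_t)$, i.e. the first line of \eqref{hs-rhos-mix}. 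Applying the pseudo-inverse $(-\Delta_{\rho_t})^\dagger$ to this identity, which is legitimate because $\partial_t\rho_t\in T_{\rho_t}\mathcal P_o(\mathcal M)$ integrates to zero, then yields the stated Legendre relation between $\Phi_t$ and $\partial_t\rho_t$ modulo a spatially constant shift.

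Next I would vary $\rho_t\mapsto\rho_t+\epsilon h_t$ with $\int_{\mathcal M}h_t\,dvol_{\mathcal M}=0$ and $h_0=h_T=0$. Only the coupling $-\langle\Phi_t,\partial_t\rho_t\rangle$ contains a time derivative of the perturbation, so an integration by parts in $t$, whose boundary contribution vanishes by $h_0=h_T=0$ irrespective of the endpoint values of $\Phi$, turns it into $+\langle\partial_t\Phi_t,h_t\rangle$. Together with $\frac{\delta\mathcal H_0}{\delta\rho}=\frac12|\nabla\Phi|^2+\frac{\delta\mathcal F}{\delta\rho}$ and the analogous identity for $\mathcal H_1$, the stationarity condition becomes $\langle\partial_t\Phi_t+\frac{\delta\mathcal H_{\mathrm{tot}}}{\delta\rho},h_t\rangle=0$ for every admissible $h_t$; since $h_t$ is constrained only by $\int_{\mathcal M}h_t\,dvol_{\mathcal M}=0$, this forces $\partial_t\Phi_t+\frac{\delta\mathcal H_{\mathrm{tot}}}{\delta\rho}=C(t)$ with $C(t)$ an arbitrary spatially constant process, which is precisely the second line of \eqref{hs-rhos-mix} after rearrangement.

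The main obstacle is not the formal computation but its functional-analytic justification on the density manifold. I would have to guarantee that the critical path stays in the interior $\mathcal P_o(\mathcal M)$, i.e. $\rho_t>0$, so that $-\Delta_{\rho_t}=-\nabla\cdot(\rho_t\nabla(\cdot))$ is genuinely elliptic and its pseudo-inverse $(-\Delta_{\rho_t})^\dagger$ is well-defined and smoothing on $T_{\rho_t}\mathcal P_o(\mathcal M)$; this is what makes the Legendre change of variables $\Phi_t=(-\Delta_{\rho_t})^\dagger\partial_t\rho_t$ meaningful and renders the first-variation identities rigorous rather than merely formal. I would also keep careful track of the fact that $\Phi_t$ and its perturbation $\phi_t$ live in the quotient $\mathscr F(\mathcal M)/\mathbb R$, so that every pairing $\langle\Phi_t,\kappa\rangle$ with $\kappa\in T_{\rho_t}\mathcal P_o(\mathcal M)$ is independent of the chosen representative, and that the admissible variations respect the constraint in \eqref{gen-var-pri}. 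Granting these, the argument collapses to the deterministic Hamilton's principle applied pathwise for each $\omega$, which is exactly why the smooth-in-time Wong--Zakai nature of $\xi_\delta$ is essential here.
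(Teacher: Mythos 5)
Your proposal is correct and follows essentially the same route as the paper: a pathwise first variation of the dual-coordinate action in $\rho$ and $\Phi$ independently, with an integration by parts in time on the coupling term $-\langle\Phi_t,\partial_t\rho_t\rangle$, yielding the two Hamiltonian equations for the time-dependent Hamiltonian $\mathcal H_0+\mathcal H_1\dot\xi_\delta$ and the Legendre relation via $(-\Delta_{\rho_t})^{\dagger}$. Your added remarks on the zero-mean constraint producing the spatially constant shift, the quotient space $\mathscr F(\mathcal M)/\mathbb R$, and the need for $\rho_t\in\mathcal P_o(\mathcal M)$ are consistent with, and slightly more explicit than, the paper's argument.
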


\begin{proof}
Consider the perturbations on $\rho$ and $\Phi$. 
Following the arguments in the proof of Proposition \ref{prop-sto}, the critical point satisfies that
\begin{align*}
&\mathcal S(\rho_t+\epsilon \delta \rho_t ,\Phi_t+\epsilon \delta \Phi_t)\\
&=\mathcal S(\rho_t,\Phi_t)
-\epsilon \int_0^T \<\Phi(t),  \partial_t \delta \rho_t\> dt 
-\epsilon \int_0^T \<\delta \Phi(t), \partial_t \rho_t\> dt\\
&+\epsilon \int_0^T \frac {\delta} { \delta \rho_t} \mathcal H_0(\rho_t,\Phi_t)\delta \rho_t +\frac {\delta} { \delta \Phi_t} \mathcal H_0(\rho_t,\Phi_t)\delta \Phi_t dt
\\
&+\epsilon \int_0^T \frac {\delta} { \delta \rho_t} \mathcal H_1(\rho_t,\Phi_t)\delta \rho_t +\frac {\delta} {\delta \Phi_t} \mathcal H_1(\rho_t,\Phi_t)\delta \Phi_t d\xi_{\delta}(t)+o(\epsilon)\\
&=\mathcal S(\rho_t,\Phi_t)
+\epsilon\int_0^T  \<\partial_t\Phi(t), \delta \rho_t\> dt 
-\epsilon\int_0^T \<\delta \Phi(t), \partial_t \rho_t\> dt\\
&+\epsilon\int_0^T \<\frac {\delta} { \delta \rho_t} \mathcal H_0(\rho_t,\Phi_t),\delta \rho_t \> +\< \frac {\delta} {\delta \Phi_t} \mathcal H_0(\rho_t,\Phi_t),\delta \Phi_t\> dt
\\
&+\epsilon\int_0^T \<\frac {\delta} {\delta \rho_t} \mathcal H_1(\rho_t,\Phi_t), \delta \rho_t\> +\<\frac {\delta} { \delta \Phi_t} \mathcal H_1(\rho_t,\Phi_t),\delta \Phi_t \> d\xi_{\delta}(t)+o(\epsilon).
\end{align*}
Taking $\epsilon \to 0$, we obtain that 
\begin{align*}
  \partial_t \rho_t &=  \frac {\delta} { \delta \Phi_t }\mathcal H_0(\rho_t,\Phi_t)+\frac {\delta } { \delta \Phi_t }\mathcal H_0(\rho_t,\Phi_t)\dot \xi_{\delta}(t)\\
  \partial_t \Phi_t &=  -\frac {\delta } { \delta \rho_t} \mathcal H_0(\rho_t,\Phi_t)-\frac {\delta } { \delta \rho_t} \mathcal H_0(\rho_t,\Phi_t)\dot \xi_{\delta}(t),
\end{align*}
which leads to \eqref{hs-rhos-mix}.
\hfill 

\end{proof}

Similarly, consider the action functional  
\begin{align*}
\mathcal S_{B}(\rho_t,\Phi_t)&=\<\rho(0),\Phi(0)\>-\<\rho(T),\Phi(T)\>+\int_0^T \<\rho_t, \circ d \Phi(t)\>+\mathcal H_0(\rho_t, \Phi_t) dt  \\&+\int_0^T \mathcal H_1(\rho_t,\Phi_t)\circ dB_t
\end{align*}
over the $\mathbb F_t$-adapted feasible set, we obtain the following stochastic system.

\begin{tm}\label{tm-sto-sto}
 The critical point of the variational problem $I(\rho^0,\rho^T)$ defined by  
   $$I (\rho^0,\rho^T)=\inf\{\mathcal S_B(\rho_t,\Phi_t)| \rho(0)=\rho^0, \rho(T)=\rho^T\}$$
 satisfies 
 the following  Hamiltonian system
 \begin{align}\label{hs-rhos-sto}
& \partial_t \rho_t+\nabla \cdot (\rho_t\nabla \Phi_t)+\eta \nabla \cdot (\rho_t\nabla \Phi_t)\circ dB_t=0,\\\nonumber
 &\partial_t \Phi_t+\frac 12|\nabla \Phi_t|^2+\eta \frac 12|\nabla \Phi_t|^2\circ dB_t=-\frac {\delta }{\delta \rho_t} \mathcal F(\rho_t)-\eta \frac {\delta }{\delta \rho_t} \Sigma(\rho_t) \circ dB_t
 \end{align}
 up to a spatially constant stochastic process shift on $\Phi_t$. 
\end{tm}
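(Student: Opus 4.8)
The plan is to compute the first variation of $\mathcal{S}_B$ along perturbations $\rho_t \mapsto \rho_t + \epsilon\,\delta\rho_t$ and $\Phi_t \mapsto \Phi_t + \epsilon\,\delta\Phi_t$ taken within the $\mathbb{F}_t$-adapted feasible set, where the fixed-endpoint constraints $\rho(0)=\rho^0$, $\rho(T)=\rho^T$ force $\delta\rho_0 = \delta\rho_T = 0$, and then to extract the Hamiltonian equations by the fundamental lemma of the calculus of variations. The structural observation that makes the deterministic computation transfer is that, since $B_t$ enters only through Stratonovich integrals, the product rule $d\langle\rho_t,\Phi_t\rangle = \langle \circ\,d\rho_t,\Phi_t\rangle + \langle\rho_t, \circ\,d\Phi_t\rangle$ holds exactly as in ordinary calculus; this is precisely the advantage of working in the Stratonovich convention, and it is what was already exploited in the proof of Proposition \ref{prop-sto} through the It\^o--Stratonovich conversion.

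First I would handle the symplectic part $\langle\rho(0),\Phi(0)\rangle - \langle\rho(T),\Phi(T)\rangle + \int_0^T \langle\rho_t, \circ\,d\Phi_t\rangle$. Applying the Stratonovich product rule to the resulting variation $\int_0^T \langle\rho_t, \circ\,d(\delta\Phi_t)\rangle$ produces the boundary contribution $\langle\rho_T,\delta\Phi_T\rangle - \langle\rho_0,\delta\Phi_0\rangle$ together with $-\int_0^T \langle\delta\Phi_t, \circ\,d\rho_t\rangle$. Using $\delta\rho_0 = \delta\rho_T = 0$, this boundary contribution cancels exactly against the variation of the explicit endpoint terms $\langle\rho_0,\Phi_0\rangle - \langle\rho_T,\Phi_T\rangle$, so that the surviving symplectic variation is $\epsilon\int_0^T \langle\delta\rho_t, \circ\,d\Phi_t\rangle - \epsilon\int_0^T \langle\delta\Phi_t, \circ\,d\rho_t\rangle$, the stochastic analogue of the $\langle\partial_t\Phi,\delta\rho\rangle - \langle\delta\Phi,\partial_t\rho\rangle$ pairing in the proof of Theorem \ref{tm-sto-mix}.

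Next I would expand the energy terms $\int_0^T \mathcal{H}_0(\rho_t,\Phi_t)\,dt + \int_0^T \mathcal{H}_1(\rho_t,\Phi_t)\circ dB_t$ to first order in $\epsilon$, giving $\langle \frac{\delta\mathcal{H}_0}{\delta\rho_t},\delta\rho_t\rangle + \langle\frac{\delta\mathcal{H}_0}{\delta\Phi_t},\delta\Phi_t\rangle$ integrated against $dt$ and the corresponding $\mathcal{H}_1$ variation integrated against $\circ\,dB_t$. Collecting the coefficients of the arbitrary variations $\delta\rho_t$ and $\delta\Phi_t$ and setting each to zero yields the abstract pair $\partial_t\rho_t = \frac{\delta\mathcal{H}_0}{\delta\Phi_t} + \frac{\delta\mathcal{H}_1}{\delta\Phi_t}\circ dB_t$ and $\partial_t\Phi_t = -\frac{\delta\mathcal{H}_0}{\delta\rho_t} - \frac{\delta\mathcal{H}_1}{\delta\rho_t}\circ dB_t$ in Stratonovich form. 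Finally I would substitute the explicit variational derivatives $\frac{\delta\mathcal{H}_0}{\delta\Phi_t} = -\nabla\cdot(\rho_t\nabla\Phi_t)$, $\frac{\delta\mathcal{H}_0}{\delta\rho_t} = \frac12|\nabla\Phi_t|^2 + \frac{\delta}{\delta\rho_t}\mathcal{F}(\rho_t)$, and the analogous $\mathcal{H}_1$ expressions carrying the factor $\eta$, which turns the abstract pair into precisely \eqref{hs-rhos-sto}.

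The hard part will be the rigorous justification within the adapted feasible set rather than the formal calculation. One must confirm that $\rho_t$ and $\Phi_t$ (together with their admissible perturbations) are semimartingales to which the Stratonovich product rule legitimately applies, that the variations $\delta\rho_t,\delta\Phi_t$ can be chosen $\mathbb{F}_t$-adapted with $\int_{\mathcal{M}}\delta\rho_t\,dvol_{\mathcal{M}} = 0$, and that vanishing of the pairing against all such variations forces the coefficients of $dt$ and of $\circ\,dB_t$ to vanish separately. Establishing the It\^o--Stratonovich equivalence needed to localize and control these stochastic integrals, exactly as invoked in Proposition \ref{prop-sto}, is the technical crux; once it is secured the argument runs parallel to Theorem \ref{tm-sto-mix} verbatim.
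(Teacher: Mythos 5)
Your proposal is correct and follows essentially the same route the paper takes: the paper omits an explicit proof of this theorem, stating only that it is obtained "similarly" by varying the action $\mathcal S_B$ over the adapted feasible set, which is exactly the computation you carry out (Stratonovich integration by parts on $\int_0^T\langle\rho_t,\circ\, d\Phi_t\rangle$, cancellation of boundary terms against the explicit endpoint terms using $\delta\rho_0=\delta\rho_T=0$, and collection of the coefficients of $\delta\rho_t$ and $\delta\Phi_t$ as in Theorem \ref{tm-sto-mix} and Proposition \ref{prop-sto}). The technical caveats you flag at the end (semimartingale regularity, adaptedness of the variations, separating the $dt$ and $\circ\, dB_t$ coefficients) are likewise left formal in the paper, so your argument is at the same level of rigor as the original.
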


Next, we show that the continuity equation and the velocity equation generated by $\Phi$, 
\begin{align}\label{rho-v-wong}
& \partial_t \rho_t+\nabla \cdot (\rho_t v_t)+\eta \nabla \cdot (\rho_t v_t)\dot \xi_{\delta}=0,\\\nonumber
 &\partial_t v_t+\nabla v_t\cdot v_t+\eta \nabla v_t\cdot v_t \dot \xi_{\delta}=-\nabla \frac {\delta }{\delta \rho_t} \mathcal F(\rho_t)-\eta \frac {\delta }{\delta \rho_t} \nabla \Sigma(\rho_t)\dot \xi_{\delta}    
\end{align}
is convergent to the corresponding system driven by the Brownian motion. 

\begin{prop}
Assume that $v(0,\cdot),\rho(0,\cdot)$ is $\mathbb F_0$-measurable and  smooth, $\mathcal{F}(\rho_t)=\int_{\mathcal M} f \rho_t dvol_\mathcal M$ and $\Sigma(\rho_t)=\int_{\mathcal M} \sigma \rho_t dvol_\mathcal M$ with $f,\sigma \in C_p^3(\mathcal M).$
Let $\rho^{\delta},v^{\delta}$ be the solution of \eqref{rho-v-wong},  and $\rho,v$ be the solution of 
\begin{align}\label{rho-v-wong-sto}
& \partial_t \rho_t+\nabla \cdot (\rho_tv_t)+\eta \nabla \cdot (\rho_tv_t)\circ dB_t=0,\\\nonumber
 &\partial_t v_t+\nabla v_t \cdot v_t+\eta \nabla v_t \cdot v_t \circ dB_t=-\nabla \frac {\delta }{\delta \rho_t} \mathcal F(\rho_t)-\eta \nabla \frac {\delta }{\delta \rho_t} \Sigma(\rho_t)\circ dB_t.
 \end{align}
 Then there exists a stopping time $\tau>0$ 
 such that  
 \begin{align*}
 \lim_{\epsilon \to 0}\mathbb P (\sup_{t\in [0,\tau)}[|\rho_t^{\delta}-\rho_t|_{L^{\infty}(\mathcal M)}+|v_t^{\delta}-v_t|_{L^{\infty}(\mathcal M^d)}]> \epsilon)=0.
 \end{align*}
\end{prop}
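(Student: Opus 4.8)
The plan is to solve both systems by the method of characteristics, thereby reducing the convergence of the density--velocity fields to the convergence of an underlying stochastic Hamiltonian ODE, which is already controlled by the results of Section~2, and then to transfer that convergence by the Jacobian and push-forward argument used in Proposition~\ref{path-con-rhos}.

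First I would identify the characteristics of \eqref{rho-v-wong}. Since $\mathcal F(\rho)=\int_{\mathcal M} f\rho\,dvol_{\mathcal M}$ and $\Sigma(\rho)=\int_{\mathcal M}\sigma\rho\,dvol_{\mathcal M}$, we have $\frac{\delta}{\delta\rho}\mathcal F=f$ and $\frac{\delta}{\delta\rho}\Sigma=\sigma$, so the velocity equation reads $\partial_t v_t+(1+\eta\dot\xi_\delta)\,\nabla v_t\cdot v_t=-\nabla f-\eta\nabla\sigma\,\dot\xi_\delta$. Introducing the flow $X^\delta_t$ with $\frac{d}{dt}X^\delta_t=(1+\eta\dot\xi_\delta)v^\delta_t(X^\delta_t)$ and $P^\delta_t:=v^\delta_t(X^\delta_t)$, a direct differentiation along $X^\delta_t$ gives
\begin{align*}
\frac{d}{dt}X^\delta_t=(1+\eta\dot\xi_\delta)P^\delta_t,\qquad
\frac{d}{dt}P^\delta_t=-\nabla f(X^\delta_t)-\eta\nabla\sigma(X^\delta_t)\dot\xi_\delta,
\end{align*}
which is exactly the mixed stochastic Hamiltonian ODE of the example at the end of Section~2 with $g=\widetilde g=\mathbb I$, $H_0=\frac12|p|^2+f$ and $H_1=\eta(\frac12|p|^2+\sigma)$. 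The same computation applied to \eqref{rho-v-wong-sto} produces its Stratonovich limit $dX=P\,dt+\eta P\circ dB$, $dP=-\nabla f(X)\,dt-\eta\nabla\sigma(X)\circ dB$. The continuity equations then say precisely that $\rho^\delta_t=(X^\delta_t)_{\#}\rho_0$ and $\rho_t=(X_t)_{\#}\rho_0$, while $v^\delta(t,X^\delta_t)=P^\delta_t$ and $v(t,X_t)=P_t$, i.e. exactly the representations of Proposition~\ref{path-con-rhos}.

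Next I would invoke the ODE convergence. By Lemma~\ref{rd-won}, in the form stated in the example at the end of Section~2, together with Lemma~\ref{mani-wong}, $(X^\delta,P^\delta)$ converges in probability, hence along a subsequence almost surely, to $(X,P)$ on $[0,T]$; the corollary following Lemma~\ref{rd-won} upgrades this to convergence of $\frac{\partial}{\partial x_0}X^\delta$ to $\frac{\partial}{\partial x_0}X$, and thus of $\det\nabla X^\delta$ to $\det\nabla X$, uniformly in $t$. I then define
$$\tau=\inf\{t\in(0,T]:X_t\text{ is not a smooth diffeomorphism of }\mathcal M\},$$
intersected with the explosion time of the ODE should the growth condition \eqref{grow-con} fail. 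Exactly as in Proposition~\ref{path-con-rhos}, before $\tau$ the Jacobian $\det\nabla X_t$ is bounded below by some $\alpha>0$, so for $\delta$ small enough $X^\delta_t$ is also a diffeomorphism with Jacobian bounded below, $(X^\delta_t)^{-1}\to(X_t)^{-1}$ uniformly, and the representation formulas transfer the almost sure uniform convergence of $X^\delta,P^\delta,\nabla X^\delta$ into $\|\rho^\delta_t-\rho_t\|_{L^\infty(\mathcal M)}+\|v^\delta_t-v_t\|_{L^\infty(\mathcal M^d)}\to0$ uniformly on $[0,\tau)$; here compactness of $\mathcal M$ together with smoothness and boundedness of $\rho_0,v_0$ controls the composition with $\rho_0$ and the determinant factor. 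Since almost sure convergence along a subsequence implies convergence in probability of the full family, this yields the stated limit as $\delta\to0$.

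The main obstacle is not the characteristic bookkeeping but controlling the flow up to a positive time that is common to all $\delta$. The mixed Hamiltonian $H_1=\eta(\frac12|p|^2+\sigma)$ carries the noise on a term quadratic in $p$, so \eqref{grow-con} need not hold and the $\delta$-dependent explosion times must be shown to stay bounded below, while simultaneously the limit flow must remain a diffeomorphism, with positive Jacobian and injectivity, on $[0,\tau)$. Making $\tau>0$ effective uniformly in $\delta$, and passing from flow convergence to uniform ($L^\infty$) convergence of the \emph{fields} $\rho^\delta,v^\delta$---which requires uniform lower bounds on $\det\nabla X^\delta_t$ and uniform convergence of the inverse maps---is the delicate step, and is precisely where the smoothness of the initial data and the compactness of $\mathcal M$ are essential.
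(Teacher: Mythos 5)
Your proposal is correct and follows essentially the same route as the paper: both reduce \eqref{rho-v-wong} and \eqref{rho-v-wong-sto} to the particle characteristic systems $dX_t^{\delta}=v^{\delta}(t,X_t^{\delta})(dt+\eta\, d\xi_{\delta})$, $dv^{\delta}(t,X_t^{\delta})=-\nabla f(X_t^{\delta})dt-\eta\nabla\sigma(X_t^{\delta})d\xi_{\delta}$ and their Stratonovich limits, invoke the Wong--Zakai convergence of Lemmas \ref{rd-won} and \ref{mani-wong}, and then transfer the convergence to $(\rho^{\delta},v^{\delta})$ via the push-forward representation $\rho^{\delta}(t,y)=|\det(\nabla X^{\delta}_t(y))|\rho(0,X^{\delta}_t(y))$ and the diffeomorphism stopping time exactly as in Proposition \ref{path-con-rhos}. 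Your explicit flagging of the quadratic-in-$p$ noise term (which is why the convergence is only local, before $\tau$) matches the paper's implicit treatment.
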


\begin{proof}
Since $\mathcal M$ is compact, $f,\sigma \in C_p^3(\mathcal M)$, similar to the proofs of Lemma \ref{rd-won} and Lemma \ref{mani-wong}, we can obtain the global well-posedness of the particle ODE systems 
\begin{align*}
&dX_t=v(t,X_t)dt+\eta v(t,X_t)\circ dB_t,\\\nonumber
&dv(t,X_t)=-\nabla f(X_t)dt-\eta \nabla \sigma(X_t) \circ dB_t,
\end{align*}
and 
\begin{align*}
dX_t^{\delta}&=v^{\delta}(t,X_t^{\delta})dt+\eta v(t,X_t^{\delta}) d\xi_{\delta},\\\nonumber 
dv^{\delta }(t,X^{\delta}_t)&=-\nabla f(X_t^{\delta})dt-\eta \nabla \sigma(X_t^{\delta}) d\xi_\delta.
\end{align*}
 Following the arguments in the proof Proposition \ref{path-con-rhos}, we can obtain that there exists a stopping time $\tau>0$ such that $X_t$ is a smooth diffeomorphism before $\tau$. Notice that the density function $\rho^\delta(t,y)$ of $X^{\delta}_t$ satisfies $\rho^{\delta}(t,y)=|\det(\nabla X^{\delta}_t(y))|\rho(0,X^{\delta}_t(y)).$ Since $\rho(0,\cdot)$ is smooth for any fixed $\omega$ and the pathwise convergence of $X^{\delta}$ holds, it follows that $\rho^{\delta}(t,y)$ converges to 
the density function of $X_t$ before $\tau$, which is $\rho(t,y)=|\det( \nabla X_t(y))|\rho(0,X_t(y)).$
Similarly, the pathwise convergence of $v^{\delta}(t,X^{\delta}_t(y))$ to $v(t,X_t(y)),$
together with invertibility of $X^{\delta}_t$ and $X_t$,  implies the convergence of $v^{\delta}(t,x)$ to $v(t,x)$ before $\tau.$

\end{proof}

\begin{rk}
If one obtains the convergence of the Wong--Zakai approximations of the mean-field SODEs,
\begin{align*}
    &dX_t=v(t,X_t)dt+\eta v(t,X_t)\circ dB_t,\\\nonumber
&dv(t,X_t)=-\nabla \frac {\delta }{\delta \rho(t,X_t)} \mathcal{F}(\rho(t,X_t))dt-\eta \nabla \frac {\delta }{\delta \rho(t,X_t)} \Sigma (t,X_t) \circ dB_t,
\end{align*}
then the convergence of \eqref{rho-v-wong}  to \eqref{rho-v-wong-sto}  can be shown similarly before the stopping time $\tau$, that is, the first time $X_t$ is not a smooth diffeomorphis on $\mathcal M$ or $X_t$ escapes $\mathcal M$.
\end{rk}

\section{Examples}

In this section, we show that both the stochastic nonlinear Schr\"odinger (NLS) equation, which models the propagation of nonlinear dispersive waves in random or inhomogenous media in quantum physics (see e.g.  \cite{PhysRevE.49.4627,PhysRevE.63.025601,MR1425880,UEDA1992166}), and nonlinear Schr\"odinger equation with random dispersion, which describes the propagation of a signal in an optical fibre with dispersion management (see e.g. \cite{Agra01b,Agra01a}), are stochastic Wasserstein-Hamiltonian flows. We also discuss that the mean-field game system with common noise (see e.g. \cite{MR195142,MR0400425,MR3712946}) is a stochastic Wasserstein-Hamiltonian flow under suitable transformations.

\subsection{Stochastic NLS equation}
The dimensionless stochastic NLS equation is given by 
\begin{align}\label{snls}
d u=\bi \Delta u dt+\bi \lambda f(|u|^2) u dt+ \bi u \circ dW_t,
\end{align}
where $W_t$ is a Wiener process on the Hilbert space $L^2(\mathcal M; \mathbb R)$ and $f$ is a real-valued continuous function.
Since $Q$-Wiener process $W$ has the Karhunen--Lo$\grave{\text{e}}$ve expansion $W(t,x)=\sum_{i\in \mathbb N^+} Q^{\frac 12}e_i(x) \beta_i(t)$ {(see e.g. \cite{BD99})},  where $\{e_i\}_{i\in \mathbb N}$ is an orthonormal  basis of $L^2(\mathcal M; \mathbb R)$, and $\{\beta_i\}_{i\in \mathbb N}$ is a sequence of linearly independent Brownian motions on $(\Omega,\mathbb F,\{\mathbb F_t\}_{t\ge 0}, \mathbb P).$  We denote $W_{\delta}(t,x)=\sum_{i\in \mathbb N^+} Q^{\frac 12}e_i(x) \beta_i^{\delta}(t)$ as the piecewise linear Wong--Zakai approximation (or other type Wong--Zakai approximation) of $W$ and consider the approximated NLS equation of \eqref{snls}
\begin{align}\label{nls-wz}
\partial_t u(t,x)=\bi \Delta_{xx} u(t,x)+\bi \lambda f(|u(t,x)|^2) u(t,x)+ \bi u(t,x) \dot W_{\delta}(t,x).
\end{align}
We aim to prove that \eqref{nls-wz} is a stochastic Wasserstein Hamiltonian flow for any $\delta>0$, and thus its limit \eqref{snls} is also a stochastic Wasserstein Hamiltonian flow. In the following, we assume that $f$ is a real-value function, $W$ is smooth with respect to the space variable, and \eqref{nls-wz} possesses a mild solution or a strong solution on $[0,T].$

 Denote the $L^2$-inner product by $\<u,v\>=\Re \int_{\mathcal M} \bar uv dvol_M,$ where $\Re$ is the real part of a complex number. The variational problem on density manifold of \eqref{nls-wz} is 
 \begin{align}\label{var-snls-1}
   I_{\delta}(\rho^0,\rho^T)=\inf\{\mathcal S(\rho_t,\Phi_t)| (-\Delta_{\rho_t})^{\dagger}\Phi_t \in \mathcal T_{\rho_t} \mathcal P_{o}(\mathcal M),\rho(0)=\rho^0, \rho(T)=\rho^T\}   
 \end{align}
whose action functional is given by the dual coordinates, 
\begin{align*}
\mathcal S(\rho_t,\Phi_t)=-\int_0^T \< \Phi(t),\partial_t\rho_t\>dt +\int_0^T\mathcal H_0(\rho_t, \Phi_t) dt  + \sum_{i\in \mathbb N^+}\int_0^T \mathcal H_i(\rho_t,\Phi_t)d\beta_i^{\delta}(t).
\end{align*}
Here $\mathcal H_0(\rho_t, \Phi_t)=\int_{\mathcal M} |\nabla \Phi_t|^2\rho_t d vol_{\mathcal M}+\frac 14 I(\rho)+\mathcal  F(\rho_t)$, $\mathcal H_i(\rho_t,\Phi_t)=-\Sigma_i (\rho_t)=-\int_{\mathcal M} Q^{\frac 12}e_i \rho_t  dvol_{\mathcal M}$, $\mathcal F(\rho)=- \frac {\lambda}2 \int_{\mathcal M} \int_0^{\rho} f(s) ds  dvol_{\mathcal M}$ with a smooth function $f$, 
and $I(\rho)=\int_{\mathcal M}|\nabla \log(\rho)|^2\rho d\textrm{vol}_{\mathcal{M}}.$  



In the following, we show the relationship between the the variational problem \eqref{var-snls-1} and nonlinear Schr\"odinger equation with Wong--Zakai approximation \eqref{nls-wz} by using the Madelung transform \cite{Madelung27}.

\begin{prop}\label{equ-var-nls}
The critical point of the variational problem \eqref{var-snls-1} satisfies the  Madelung system of 
\eqref{nls-wz} on the support of $\rho_t.$  Conversely, the Madelung transform of \eqref{nls-wz} satisfies the critical point of \eqref{var-snls-1} on the support of $|u_t|.$
\end{prop}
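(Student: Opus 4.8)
The plan is to treat the statement pathwise. For a fixed $\delta>0$ and a fixed sample point, the Wong--Zakai approximation $\dot W_\delta(t,x)=\sum_{i\in\mathbb N^+}Q^{\frac12}e_i(x)\dot\beta_i^\delta(t)$ is a genuine function of $t$, so both \eqref{nls-wz} and the variational problem \eqref{var-snls-1} are deterministic for each frozen noise path, and no stochastic calculus beyond this pathwise identification is needed. The bridge between the two is the Madelung transform $u_t=\sqrt{\rho_t}\,e^{\bi\Phi_t}$, which is meaningful exactly where $u_t\neq0$, i.e. where $\rho_t=|u_t|^2>0$; this is precisely the support restriction in the statement. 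I would first record the elementary identities obtained by inserting $u=\sqrt{\rho}\,e^{\bi\Phi}$ into $\bi\Delta u$: from $\nabla u=(\tfrac{\nabla\rho}{2\rho}+\bi\nabla\Phi)u$ and $\Delta u=\nabla\cdot\nabla u$ one finds that the real part of $\Delta u/u$ equals $\tfrac{\Delta\sqrt\rho}{\sqrt\rho}-|\nabla\Phi|^2$ and its imaginary part equals $\tfrac1\rho\nabla\cdot(\rho\nabla\Phi)$.

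For the forward direction I would derive the critical-point equations of $I_\delta(\rho^0,\rho^T)$ exactly as in the proof of Theorem \ref{tm-sto-mix}: perturbing $\rho\mapsto\rho+\epsilon\,\delta\rho$ and $\Phi\mapsto\Phi+\epsilon\,\delta\Phi$ in $\mathcal S(\rho_t,\Phi_t)$, integrating the $\langle\Phi,\partial_t\rho\rangle$ term by parts in $t$, and collecting the coefficients of $\delta\Phi$ and $\delta\rho$ yields the Hamiltonian system $\partial_t\rho_t=\tfrac{\delta}{\delta\Phi_t}\mathcal H_0$ and $\partial_t\Phi_t=-\tfrac{\delta}{\delta\rho_t}\mathcal H_0-\sum_i\tfrac{\delta}{\delta\rho_t}\mathcal H_i\,\dot\beta_i^\delta$. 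The remaining work is to evaluate these variational derivatives and compare them with the Madelung identities. Here $\tfrac{\delta}{\delta\Phi}\mathcal H_0=-2\nabla\cdot(\rho\nabla\Phi)$ reproduces the continuity equation, $\tfrac{\delta}{\delta\rho}\mathcal H_i=-Q^{\frac12}e_i$ together with $\tfrac{\delta}{\delta\Phi}\mathcal H_i=0$ shows that the noise enters only the phase equation and assembles into $\sum_i Q^{\frac12}e_i\dot\beta_i^\delta=\dot W_\delta$, and $\tfrac{\delta}{\delta\rho}\mathcal H_0=|\nabla\Phi|^2+\tfrac14\tfrac{\delta I}{\delta\rho}+\tfrac{\delta\mathcal F}{\delta\rho}$ supplies the potential and quantum terms of the phase equation.

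The decisive identity, and the step I expect to be the main obstacle, is the identification of the Fisher information functional with the Bohm quantum potential, namely $\tfrac14\tfrac{\delta}{\delta\rho}I(\rho)=-\tfrac{\Delta\sqrt\rho}{\sqrt\rho}$, which follows by varying $I(\rho)=\int_{\mathcal M}|\nabla\rho|^2/\rho\,dvol_{\mathcal M}$ and integrating by parts. It is exactly this term that turns the extra kinetic contribution $\tfrac14 I(\rho)$ in $\mathcal H_0$ into the genuinely quantum term $\tfrac{\Delta\sqrt\rho}{\sqrt\rho}$ that distinguishes the Schr\"odinger flow from a classical Wasserstein Hamiltonian flow. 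With this in hand, the phase equation $\partial_t\Phi=\tfrac{\Delta\sqrt\rho}{\sqrt\rho}-|\nabla\Phi|^2-\tfrac{\delta\mathcal F}{\delta\rho}+\dot W_\delta$ and the continuity equation $\partial_t\rho=-2\nabla\cdot(\rho\nabla\Phi)$ match, up to the spatially-constant stochastic shift recorded in Theorem \ref{tm-sto-mix}, the real and imaginary parts of $-\bi\,\partial_t u=\Delta u+\lambda f(\rho)u+u\dot W_\delta$ obtained by the Madelung substitution.

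For the converse I would run the same computation in reverse: given a smooth, non-vanishing solution $u_t$ of \eqref{nls-wz} on the support of $|u_t|$, set $\rho_t=|u_t|^2$ and $\Phi_t=\arg u_t$, and separate \eqref{nls-wz} into real and imaginary parts through the same identities; this produces precisely the Hamiltonian system above, which is the Euler--Lagrange system of $\mathcal S$, so $(\rho_t,\Phi_t)$ is a critical point of $I_\delta$. The two directions are thus a single Madelung calculation read in opposite senses. The only genuine care needed, besides the Fisher-information identity, is that $\Phi_t=\arg u_t$ is single-valued and smooth and that $\rho_t>0$ on the relevant set, which is why the equivalence is asserted only on the support of $\rho_t$, respectively of $|u_t|$.
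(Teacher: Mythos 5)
Your proposal is correct and follows essentially the same route as the paper: derive the critical-point system of \eqref{var-snls-1} by the dual-coordinate perturbation argument of Theorem \ref{tm-sto-mix}, then pass back and forth through the Madelung substitution $u=\sqrt{\rho}\,e^{\bi\Phi}$ on the set where $\rho>0$, separating real and imaginary parts. The only addition is that you make explicit the identity $\tfrac14\tfrac{\delta}{\delta\rho}I(\rho)=-\tfrac{\Delta\sqrt{\rho}}{\sqrt{\rho}}$, which the paper subsumes under ``direct calculations''; this is a correct and useful clarification, not a deviation.
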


\begin{proof}
By studying the perturbation on the dual coordinates, the arguments in the proof of Theorem \ref{tm-sto-mix} yield that the critical point of \eqref{var-snls-1} satisfies
 \begin{align*}
& \partial_t \rho_t+2\nabla \cdot (\rho_t\nabla \Phi_t)=0,\\\nonumber
 &\partial_t \Phi_t+|\nabla \Phi_t|^2=-1/4 \frac {\delta}{\delta \rho_t} I(\rho_t)-\frac {\delta }{\delta \rho_t} \mathcal F(\rho_t)- \dot W_{\delta}.
 \end{align*}
Define a complex valued function by $\widehat u(t,x)=\sqrt{\rho(t,x)}e^{\bi \Phi(t,x)}.$ One obtains the equation of $\widehat u(t,x)$ satisfying \eqref{nls-wz} on the support of $\rho_t$ by direct calculations. 

Conversely, using the Madelung transform of the solution $\sqrt{\rho(t,x)}e^{\bi S(t,x)}=u(t,x)$ where $\rho=|u|^2$ for \eqref{nls-wz}.   Then direct calculation leads to
\begin{align*}
&e^{\frac 12 \log(\rho)+\bi S}(\frac 12 \frac {\partial_t \rho}{\rho}+\bi  {\partial_t S})\\
&=\bi e^{\frac 12 \log(\rho)+\bi S} (\frac 12 \frac {\nabla \rho}{\rho}+\bi \nabla S)^2+\bi e^{\frac 12 \log(\rho)+\bi S}(\frac 12 \frac {\Delta \rho}{\rho}+\bi \Delta S-\frac 12 |\frac {\nabla \rho}{\rho}|^2)\\
&+\bi  e^{\frac 12 \log(\rho)+\bi S} (\lambda f(\rho)+ \dot W_{\delta })\\
&=\bi e^{\frac 12 \log(\rho)+\bi S}(\frac 14 (\frac {\nabla \rho}{\rho})^2-(\nabla S)^2+\bi  \frac {\nabla \rho}{\rho} \cdot \nabla S) +\bi e^{\frac 12 \log(\rho)+\bi S}(\frac 12 \frac {\Delta \rho}{\rho}+\bi \Delta S-\frac 12 |\frac {\nabla \rho}{\rho}|^2)\\
&+\bi  e^{\frac 12 \log(\rho)+\bi S} (\lambda f(\rho)+ \partial_t W_{\delta }).
\end{align*}
This implies that on the support or $|u_t|$, it holds that 
\begin{align}\label{rhos-law}
\partial_t \rho&= -2 \nabla \cdot (\rho \nabla S),\\\nonumber
\partial_t S
&=-|\nabla S|^2-\frac 14 \frac {\delta}{\delta \rho} I(\rho)+\lambda f(\rho)+\dot W_{\delta }.
\end{align}

\end{proof}

Based on the above result, taking spatial gradient on the potential $S$, we get the following system with the conservation law
\begin{align}\label{rhov-law}
\partial_t \rho&= - \nabla \cdot (\rho v),\\\nonumber
\partial_t v&= -\nabla_x v\cdot v-\nabla_x \frac 12 \frac {\delta}{\delta \rho} I(\rho)+2\lambda \nabla_x f(\rho)+2\nabla_x \dot  W_{\delta },
\end{align}
where $v(t,x)=2\nabla S(t,x).$

The following theorem indicates that the stochastic NLS equation is a stochastic Wasserstein Hamiltonian flow due to the convergence of the Wong--Zakai approximation. For convenience, let us assume that $\mathcal M=\mathbb T^d$ or $\mathbb R^d$ and consider the case that $W$ consists of a finite combinations of independent Brownian motions, i.e., $W(t,x)=\sum_{k=1}^N q_{k}(x)\beta_{k}(t),$ with $q_k(x)\in \mathbb H^{m}(\mathcal M)\cap W^{k,\infty}(\mathcal M)$ for some $m\in \mathbb N$ and $k\in \mathbb N^+$. Here $\mathbb H^{m}(\mathcal M), W^{k,\infty}(\mathcal M)$ are the standard Sobolev space.

\begin{tm}
Let $m\in \mathbb N$ and $k\in \mathbb N^+$. 
Suppose that the initial value of  \eqref{nls-wz} and \eqref{snls} 
$u_0\in  \mathbb H^{m}$ is $\mathbb F_0$-measurable and has any finite $p$-moment, $p\in \mathbb N^+$, and that  
$f$ is a real-valued continuous  function satisfies
\begin{align*}
\|f(|u|^2)u-f(|v|^2)v\|&\le L_f(R) \|u-v\|,\; \|u\|,\|v\|\le R,\\
\|f(|u|^2)u\|_{\mathbb H^1}&\le L_f(R)(1+\|u\|_{\mathbb H^1}),\; \|u\|_{\mathbb H^1}\le R,
\end{align*}
where $\lim_{R\to \infty}L_f(R)=\infty.$
The Wong--Zakai approximation \eqref{nls-wz} is convergent almost surely to 
the stochastic NLS equation \eqref{snls} up to a subsequence. 
\end{tm}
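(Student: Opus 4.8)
The plan is to eliminate the multiplicative noise by a pathwise gauge (phase) transformation, thereby reducing both \eqref{nls-wz} and \eqref{snls} to random partial differential equations with time-regular coefficients, and then to compare them using the Wong--Zakai convergence of the driving processes together with uniform-in-$\delta$ energy estimates. Concretely, I would introduce $v^{\delta}=e^{-\bi W_{\delta}}u^{\delta}$ and $v=e^{-\bi W}u$, where $u^{\delta}$ and $u$ denote the solutions of \eqref{nls-wz} and \eqref{snls} respectively. Since the noise enters only through the term $\bi u\circ dW$ (respectively $\bi u\dot W_{\delta}$) and $|e^{\bi W}|\equiv 1$ so that $|v|=|u|$, the Stratonovich chain rule shows that $v$ solves the random equation
\begin{align*}
\partial_t v=\bi\Delta v-2\nabla W\cdot\nabla v-(\Delta W)\,v-\bi|\nabla W|^2v+\bi\lambda f(|v|^2)v,
\end{align*}
while $v^{\delta}$ solves the same equation with $W$ replaced by $W_{\delta}$ (here ordinary calculus suffices, the term $\bi(\partial_t W_{\delta})u^{\delta}$ cancelling the gauge derivative). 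Because $W(t,x)=\sum_{k=1}^N q_k(x)\beta_k(t)$ with fixed spatial profiles $q_k\in\mathbb H^m\cap W^{k,\infty}$, each spatial derivative of $W_{\delta}$ equals $\sum_k(\partial^{\alpha}q_k)\beta_k^{\delta}$, so all coefficients of the $v^{\delta}$-equation converge pathwise, uniformly on $[0,T]$, to those of the $v$-equation, at the rate governed by $\max_k\sup_{t\le T}|\beta_k^{\delta}(t)-\beta_k(t)|\to 0$ almost surely.

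Next I would establish $\delta$-uniform a priori bounds. The gauge transformation preserves the $L^2$-norm, $\|u^{\delta}(t)\|=\|u_0\|=\|u(t)\|$, giving a conserved mass; together with an $\mathbb H^1$ energy identity for the transformed equation---obtained by pairing with $-\Delta v^{\delta}$ and taking the real part, using that $\bi\Delta$ is skew-adjoint, that the coefficients $\nabla W_{\delta},\Delta W_{\delta}$ are bounded in $W^{1,\infty}$ uniformly in $\delta$, and the growth bound $\|f(|u|^2)u\|_{\mathbb H^1}\le L_f(R)(1+\|u\|_{\mathbb H^1})$---this yields bounds on $\sup_{t\le T}\|v^{\delta}(t)\|_{\mathbb H^1}$ that hold up to the stopping time $\tau_R^{\delta}=\inf\{t:\|v^{\delta}(t)\|_{\mathbb H^1}\ge R\}$. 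I would then control the difference $w^{\delta}=v^{\delta}-v$ by subtracting the two random equations, pairing $w^{\delta}$ in $\mathbb H^1$, and invoking Gronwall's inequality: the nonlinearity is handled by the local Lipschitz bound $\|f(|u|^2)u-f(|v|^2)v\|\le L_f(R)\|u-v\|$ on $\{t<\tau_R^{\delta}\wedge\tau_R\}$, the remaining terms being either linear in $w^{\delta}$ or forcing terms proportional to $W_{\delta}-W$ and its spatial derivatives. This gives $\sup_{t\le T\wedge\tau_R^{\delta}\wedge\tau_R}\|w^{\delta}(t)\|_{\mathbb H^1}\to 0$ pathwise, and transforming back via $u^{\delta}=e^{\bi W_{\delta}}v^{\delta}$ produces $u^{\delta}\to u$.

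Finally, to remove the localization I would let $R\to\infty$, using the conserved mass and the uniform $\mathbb H^1$ estimate to show that $\sup_{\delta}\PP(\tau_R^{\delta}\wedge\tau_R\le T)\to 0$, which upgrades the stopped convergence to convergence in probability on $[0,T]$; a standard diagonal extraction then yields an almost surely convergent subsequence, as claimed. The hard part will be closing the $\mathbb H^1$ (indeed $\mathbb H^m$) energy estimate uniformly in $\delta$ in the presence of the first-order transport term $2\nabla W_{\delta}\cdot\nabla v^{\delta}$: this term has the same differential order as the energy being estimated, and its coefficient is only pathwise bounded, so one must integrate by parts and exploit the skew-symmetry of $\bi\Delta$ to cancel its top-order contribution, while simultaneously coping with the merely local Lipschitz nonlinearity whose constant $L_f(R)$ diverges as $R\to\infty$---precisely what forces the stopping-time localization and the passage to a subsequence.
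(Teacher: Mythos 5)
Your gauge-transformation route is genuinely different from the paper's proof and, in its main thrust, viable. The paper never removes the noise: it works directly with the Hamiltonian identity $H(u^{\delta}(t))=H(u^{\delta}(0))+\int_0^t\langle\nabla u^{\delta},\bi u^{\delta}\nabla dW^{\delta}\rangle$ to get a $\delta$-uniform energy bound (decomposed into the terms $V_1,\dots,V_6$), and then runs an $L^2$ Gronwall on $\|u-u^{\delta}\|^2$ in which the delicate point is matching the Wong--Zakai cross-correlations $\int\langle\cdot,\bi u^{\delta}([s]_{\delta})\,d\xi_{\delta}\rangle$ against the It\^o correction $-\tfrac12\sum_k|q_k|^2u$; this requires careful martingale/conditional-expectation bookkeeping. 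Your substitution $v=e^{-\bi W}u$, $v^{\delta}=e^{-\bi W_{\delta}}u^{\delta}$ turns both equations into pathwise PDEs with coefficients built from $\nabla W,\Delta W,|\nabla W|^2$, so the entire stochastic-calculus layer disappears and the comparison reduces to a deterministic perturbation argument driven by $\sup_t|\beta_k^{\delta}(t)-\beta_k(t)|\to0$. That is a real simplification, and it is available precisely because the noise is linear multiplicative with real-valued, spatially smooth $W=\sum_{k=1}^Nq_k\beta_k$.

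Two steps need repair. First, you propose to pair the difference equation for $w^{\delta}$ in $\mathbb H^1$, but the hypotheses only give an $L^2$-Lipschitz bound $\|f(|u|^2)u-f(|v|^2)v\|\le L_f(R)\|u-v\|$; there is no control of $\nabla\bigl(f(|v^{\delta}|^2)v^{\delta}-f(|v|^2)v\bigr)$. Do the Gronwall in $L^2$ only (as the paper does); the $\mathbb H^1$ a priori bounds are needed solely to control the forcing term $\nabla(W_{\delta}-W)\cdot\nabla v$, and by mass conservation the Lipschitz constant is $L_f(\|u_0\|)$, not $L_f(R)$. Second, and more seriously, your de-localization is circular as written: the $\mathbb H^1$ estimate you obtain from the gauged equation is a Gronwall bound of the form $\|v^{\delta}(t)\|_{\mathbb H^1}^2\le(1+\|v_0\|_{\mathbb H^1}^2)e^{C(\omega)T+CL_f(R)T}$ valid only for $t\le\tau_R^{\delta}$, and since $L_f(R)\to\infty$ with no specified rate, this bound need not be smaller than $R^2$ for large $R$, so it cannot by itself show $\sup_{\delta}\PP(\tau_R^{\delta}\le T)\to0$. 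The paper's way around this is exactly its Hamiltonian identity: because the deterministic NLS flow conserves $H$, the energy evolution contains no $L_f(R)$ at the stopping level, and the resulting $\delta$-uniform bound $\E[\sup_tH(u^{\delta}(t))]\le C$ is what lets the stopping times be removed. You can import this into your scheme by bounding $\|\nabla v^{\delta}\|\le\|\nabla u^{\delta}\|+\|\nabla W_{\delta}\|_{L^{\infty}}\|u_0\|$ and taking the $\mathbb H^1$ a priori bound from the energy of the original (untransformed) equation rather than from a raw $\mathbb H^1$ Gronwall on the gauged one.
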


\begin{proof}
Since the driving noise is real-valued, the skew-symmetry of the NLS equation leads to the mass conservation laws for both \eqref{nls-wz} and \eqref{snls}. By the local Lipschitz property of $f(|\cdot|^2)(\cdot)$, one can obtain the existence of the unique mild solutions for both  \eqref{nls-wz} and \eqref{snls} in $\mathcal C([0,T],L^2)$ by a standard argument in \cite{BD99}. 
In order to study the converge in $L^2,$ let us define an approximation sequence $u_0^{R_1}\in \mathbb H^1, R_1 \to \infty$ of the initial value $u_0$, which can be taken by using  truncated Fourier series or spectral Galerkin method (see e.g. \cite{CHLZ19}). The growth condition of $f$ in $\mathbb H^1$ and the uniform boundedness assumption of $q_k$ lead to 
\begin{align*}
\E \Big [\sup_{t\in [0,T]}\|u_t^{R_1}\|^{2p}_{\mathbb H^1}\Big]\le C(T,R_1,p)<\infty,
\E \Big [\sup_{t\in [0,T]}\|u_t^{\delta,R_1}\|_{\mathbb H^1}^{2p}\Big] \le C(T,R_1,\delta,p)<\infty,
\end{align*}
where $p\ge 1$, $\lim_{R_1\to \infty} C(T,R_1,p)=\infty,$ $\lim_{R_1\to \infty} C(T,R_1,\delta,p)=\infty$.
Meanwhile, $u_t^{R_1}, u_t^{\delta,R_1}$ are convergent to $u_t, u_t^{\delta},$ a.s. in $\mathcal C([0,T];L^2)$ as $R_1\rightarrow \infty$, respectively up to a subsequnce. 
The continuity estimate of $u_t^{R_1}, u_t^{R_1,\delta}$,
\begin{align*}
    &\E \Big[\|u^{R_1}(t)-u^{R_1}(s)\|^{2p}\Big]\le C(T,R_1,p)|t-s|^{ p},\\
    &\E \Big[\|u^{R_1,\delta}(t)-u^{R_1,\delta}(s)\|^{2p}\Big]\le C(T,R_1,\delta,p) (|t-s|^{p}+|\delta|^p),
\end{align*}
can be obtained due to the mass conservation law and the continuity of $e^{\bi \Delta t}$. However, to get the convergence of \eqref{nls-wz}, we need a priori estimate of $u^{R_1,\delta}$ which is independent of $\delta.$ To this end, we study the enegry of the Wong--Zakai approximation, $H(u)=\int_{\mathcal M}\frac 12 |\nabla u|^2 dvol_{\mathcal M}- \frac \lambda 2 \int_{\mathcal M}\int_0^{|u|^2}f(s)ds  dvol_{\mathcal M},$  and obtain
\begin{align*}
H(u^{\delta}(t))=   H(u^{\delta}(0))+\int_0^t \<\nabla u^{\delta}(s), \bi u^{\delta}(s) \nabla dW^{\delta}(s)\>.
\end{align*}
By taking expectation, we get that  
\begin{align*}
  &\E \Big[\sup_{t\in [0,T]}H(u^{\delta}(t))\Big]\\
  &\le   \E \Big[H(u^{\delta}(0))\Big] +
  \E \Big[\sup_{t\in [0,T]}|\int_0^{[t]_{\delta}} \<\nabla u^{\delta}([s]_{\delta}), \bi u^{\delta}([s]_{\delta}) \nabla dW^{\delta}(s)\>|\Big]\\
  &+  \E \Big[\sup_{t\in [0,T]}|\int_{[t]_{\delta}}^t \<\nabla u^{\delta}([s]_{\delta}), \bi u^{\delta}([s]_{\delta}) \nabla dW^{\delta}(s)\>|\Big]\\
  &+ \E \Big[\sup_{t\in [0,T]}|\int_0^{[t]_{\delta}} \<\nabla u^{\delta}([s]_{\delta}), \bi (u^{\delta}(s)-u^{\delta}([s]_{\delta})) \nabla dW^{\delta}(s)\>|\Big]\\
  &+ \E \Big[\sup_{t\in [0,T]}|\int_{[t]_{\delta}}^t \<\nabla u^{\delta}([s]_{\delta}), \bi (u^{\delta}(s)-u^{\delta}([s]_{\delta})) \nabla dW^{\delta}(s)\>|\Big]\\
  &+ \E \Big [\sup_{t\in [0,T]} |\int_0^{[t]_{\delta}} \<\nabla (u^{\delta}(s)-u^{\delta}([s]_\delta)), \bi u^{\delta}(s) d W^{\delta}(s)\>|\Big] \\
   &+ \E \Big [\sup_{t\in [0,T]} |\int_{[t]_{\delta}}^{t} \<\nabla (u^{\delta}(s)-u^{\delta}([s]_\delta)), \bi u^{\delta}(s) d W^{\delta}(s)\>|\Big] \\
  &=\E \Big[H(u^{\delta}(0))\Big]+V_1+V_2+V_3+V_4+V_5+V_6. 
\end{align*} 
Below we show the estimates of $V_i$ ($i=1,\cdots,6$).
The Burkholder's inequality and mass conservation law lead to 
\begin{align*}
V_1 \le  \E \Big[\int_0^T C(H(u^{\delta}([t]_{\delta}))+C(\|u_0\|)) ds\Big].   
\end{align*}
Applying the Burkholder and Minkowski  inequalities, and the mass conservation law, we achieve that for $T=K\delta$,
\begin{align*}
V_2&\le  1+\E \Big[\sup_{t\in [0,T]}|\int_{[t]_{\delta}}^t \<\nabla u^{\delta}([s]_{\delta}), \bi u^{\delta}([s]_{\delta}) \nabla dW^{\delta}(s)\>|^2\Big]\\
&\le 1+ \sum_{k=0}^{K-1} \E \Big[\sup_{t\in [t_k,t_{k+1}]}|\int_{t_k}^t \<\nabla u^{\delta}(t_k), \bi u^{\delta}(t_k) \nabla dW(s)\>|^2\Big]\\
&\le 1+C \sum_{k=0}^{K-1} \E \Big [\sum_{i=1}^N \int_{t_k}^{t_{k+1}}\<\nabla u^{\delta}(t_k),\bi u^{\delta}(t_k)\nabla q_i(x)\>^2dt \Big]\\
&\le 1+C \sum_{i=1}^N \E \Big[ \|\nabla u^{\delta}([t]_{\delta})\|^2\|u^{\delta}([t]_{\delta})\|^2\|q_i\|^2_{W^{1,\infty}} dt\Big]\\
&\le1+ C \|u(0)\|^2\sum_{i=1}^N\|q_i\|^2_{W^{1,\infty}}  \int_{0}^T \E \Big[\|\nabla u^{\delta}([t]_{\delta})\|^2\Big] dt.
\end{align*}
The definition of $H$ leads to that there exists a constant $C(\|u_0\|)$ depending on $\|u_0\|$ such that  
\begin{align*}
    &\E \Big[\sup_{t\in [0,T]}|\int_{[t]_{\delta}}^t \<\nabla u^{\delta}([s]_{\delta}), \bi u^{\delta}([s]_{\delta}) \nabla dW^{\delta}(s)\>|^2\Big]\\
    &\le 2 C \|u_0\|^2\sum_{i=1}^N\|q_i\|^2_{W^{1,\infty}} \int_{0}^T \E \Big[  H(u^{\delta}([t]_{\delta}))\Big]dt
    + C(\|u_0\|).
\end{align*}
The mild form of $u^{\delta}(s)-u^{\delta}([s]_{\delta})$,
\begin{align*}
&u^{\delta}(s)-u^{\delta}([s]_{\delta})\\
&=(e^{\bi \Delta (s-[s]_{\delta})}-I)u^{\delta}([s]_{\delta})
+\int_{[s]_{\delta}}^s e^{\bi \Delta (s-r)}\bi \lambda f(|u^{\delta}(r)|^2)u^{\delta}(r) dr\\
&+\int_{[s]_{\delta}}^s\bi e^{\bi \Delta (s-r)} u^{\delta}(r) dW^{\delta}(r),
\end{align*}
together with the mass conservation law and $\|e^{\bi \Delta t}-I\|_{\mathcal L(\mathbb H^1,L^2)}\le C t^{\frac 12}$ (see, e.g., \cite{BD99}), yields that 
\begin{align}\label{con-hold}
\|u^{\delta}(s)-u^{\delta}([s]_{\delta})\|
&\le C \|u^{\delta}([s]_{\delta})\|_{\mathbb H^1}\delta^{\frac 12}
+ L_f(\|u_0\|)(1+\|u_0\|)\delta
\\\nonumber
&\quad+  C \|W([s]_{\delta}+\delta)-W([s]_{\delta})\|\|u_0\|. 
\end{align}
By making use of \eqref{con-hold} and the Burkholder's inequality, we obtain 
\begin{align*}
    V_3
&\le     
C (1+\E\Big [ \int_0^T \|\nabla u^{\delta }([s]_{\delta})\|^2 ds\Big])\\
&+ C(\|u_0\|)\E\Big [\int_0^T\|\nabla u^{\delta}([s]_{\delta})\|(1+\|u_0\|)\Big(\frac {\|W([s]_{\delta}+\delta)-W([s]_{\delta})\|_{L^{\infty}}^2}{\delta }\\
&+\|W([s]_{\delta}+\delta)-W([s]_{\delta})\|_{L^{\infty}}\Big)ds\Big]\\
&\le C(\|u_0\|)(1+\E\Big [ \int_0^T H (u^{\delta }([s]_{\delta})) ds\Big]).
\end{align*}
Similar arguments yield that
\begin{align*}
 V_4
 &\le C \E\Big [\sup_{t\in [0,T]} \int_{[t]_{\delta}}^{t} \|\nabla u^{\delta }([s]_{\delta})\|^2 \|W([s]_{\delta}+\delta)-W([s]_{\delta})\|\delta^{-\frac 12} ds\Big]\\
&+ C(\|u_0\|)\E\Big [\sup_{t\in [0,T]}\int_{[t]_{\delta}}^t \|\nabla u^{\delta}([s]_{\delta})\|(1+\|u_0\|)\Big(\frac {\|W([s]_{\delta}+\delta)-W([s]_{\delta})\|_{L^{\infty}}^2}{\delta }\\
&+\|W([s]_{\delta}+\delta)-W([s]_{\delta})\|_{L^{\infty}}\Big)ds\Big]\\
&\le C \delta \E\Big [\sup_{s\in [0,T]} H(u^{\delta}([s]_{\delta})) \Big]+ C(\|u_0\|)\delta.
\end{align*}
The estimates of $V_5$ and $V_6$ are omitted here since they are very similar to those of $V_3$ and $V_4$.
We conclude that
\begin{align*}
   & V_1+V_2+V_3+V_4+V_5+V_6\\
    &\le C\delta  \E \Big[\sup_{t\in [0,T]}H(u^{\delta}(t))\Big]+C\E \Big[\int_0^T (H(u^{\delta}([t]_{\delta}))dt\Big]
    +C(\|u_0\|).
\end{align*}
Thus, we obtain $\E \Big[\sup\limits_{t\in [0,T]}H(u^{\delta}(t))\Big]\le C(T,R_1,\|u_0\|)$ by using Gronwall's inequality and taking $\delta$ small enough. 
Similarly, it holds that for any $p\ge 1,$
\begin{align*}
&\E \Big[\sup_{t\in [0,T]}H^p(u^{\delta}(t))\Big]\le C(T,R_1,\|u_0\|,p),\\
&\E \Big[\|u^{R_1,\delta}(t)-u^{R_1,\delta}(s)\|^{2p}\Big]\le C(T,R_1,p) (|t-s|^{p}+|\delta|^p).
\end{align*}

Next, it suffices to prove the convergence of the Wong--Zakai approximation. To this end, we consider a stopping time $\tau=\inf\{t\in [0,T]| \|u^{R_1}(t)\|\ge R \; \text{or}\;  \|u^{\delta,R_1}([t]_{\delta})\|\ge R\}.$ In the following, we omit the supindex $R_1$. 
Applying the chain rule, we obtain that for $t\le \tau,$ 
\begin{align*}
&\|u(t)-u^{\delta}(t)\|^2=\|u(0)-u^{\delta}(0)\|^2+ 2\int_0^t\<\bi f(|u(s)|^2)u(s)-\bi f(|u^{\delta}(s)|^2)u^{\delta}(s), ~ u(s)-u^{\delta}(s)\>ds\\
& + 2\int_0^t \<u(s)-u^{\delta}(s),-\frac 12 \sum_{k=1}^N |q_k|^2 u(s) \>ds \\
&+2\int_0^t \<u(s)-u^{\delta}(s), \bi u(s) dW(s)-\bi u^{\delta}(s) dW_{\delta}(s)\>\\
&+\int_0^t \sum_{k=1}^N \int_{\mathcal M}|u(s)|^2|q_k|^2dvol_M ds\\
&\le \int_0^t 2 L_f(\|u(0)\|)  \|u(s)-u^{\delta}(s)\|^2 ds+  \int_0^t \<u^{\delta}(s),\sum_{k=1}^N |q_k|^2 u(s) \>ds\\
&-2\int_0^t \< u(s), \bi u^{\delta}(s)dW^{\delta}(s)\> -2\int_0^t\<u^{\delta}(s),\bi u(s)dW(s)\>\\
& \le \int_0^t 2 L_f(\|u(0)\|)  \|u(s)-u^{\delta}(s)\|^2 ds+  \int_0^t \<u^{\delta}(s),\sum_{k=1}^N |q_k|^2 u(s) \>ds\\
&-2\int_0^t \< u(s), \bi u^{\delta}([s]_{\delta})dW^{\delta}(s)\> 
-2\int_0^t \< u(s),\bi (u^{\delta}(s)-u^{\delta}([s]_{\delta}))dW^{\delta}(s) \> 
\\ &-2\int_0^t\<u^{\delta}([s]_{\delta}),\bi u(s)dW(s)\>-2\int_0^t\<u^{\delta}(s)-u^{\delta}([s]_{\delta}),\bi u(s)dW(s)\>\\
&=: 
\int_0^t 2 L_f(\|u(0)\|)  \|u(s)-u^{\delta}(s)\|^2 ds+III_1+III_2+III_3+III_4+III_5.
\end{align*}

For the term $III_2,$  the property of Wiener process, the mass conservation law, H\"older's and Young's inequality, as well as the property of the martingale,  yield that 
\begin{align*}
\E [III_2]&\le -2\int_{0}^{[t]_{\delta}} \E\Big[\<u(s)-u([s]_{\delta}),\bi u^{\delta}([s]_{\delta})dW^{\delta}(s)\>\Big]\\
&-2\int_{0}^{[t]_{\delta}} \E\Big[\<u([s]_{\delta}),\bi u^{\delta}([s]_{\delta})dW^{\delta}(s)\>\Big]+ C \delta^{\frac 12}\\
&\le C(1+C_{R})\delta^{\frac 12} -2\int_{0}^{[t]_{\delta}} \E\Big[\<\int_{[s]_{\delta}}^{s}\bi u([r]_{\delta}))dW(r),\bi u^{\delta}([s]_{\delta})dW^{\delta}(s)\>\Big]\\
&  -2\int_{0}^{[t]_{\delta}} \E\Big[\<\int_{[s]_{\delta}}^{s} (\exp(\bi \Delta (r-[s]_{\delta}))-I) \bi u([r]_{\delta}))dW(r),\bi u^{\delta}([s]_{\delta})dW^{\delta}(s)\>\Big] \\
&\le -2\int_{0}^{[t]_{\delta}} \E\Big[\<\int_{[s]_{\delta}}^{s}\bi u([r]_{\delta})dW(r),\bi u^{\delta}([s]_{\delta})dW^{\delta}(s)\>\Big]+ C(1+C_{R})\delta^{\frac 12}.
\end{align*}
Similar to $III_2,$ we have that
$\E [III_4] \le  C(1+C_R) \delta^{\frac  12}.$

For the terms $III_3$ and $III_5$, by taking expectation and using the property $\|e^{\bi \Delta t}-I\|_{\mathcal L(\mathbb H^1,L^2)}\le C t^{\frac 12}$, 
the continuity estimate of $u$ and the property of martingale, we arrive at 
\begin{align*}
\E\Big[ III_3\Big]
&\le  -\int_0^{[t]_{\delta}} 2\E\Big [\< u(s)-u([s]_{\delta}), \bi (u^{\delta}(s)-u^{\delta}([s]_{\delta}) )dW^{\delta}(s)\>\Big ]\\
& -\int_0^{[t]_{\delta}} 2\E\Big [\<u([s]_{\delta}), \bi (u^{\delta}(s)-u^{\delta}([s]_{\delta}) )dW^{\delta}(s)\>\Big ] +C(1+C_{R})\delta^{\frac 12}.\\
&= -\int_0^{[t]_{\delta}} 2\E\Big [\<u([s]_{\delta}), \bi \left( \int_{[s]_{\delta}}^{s} \bi u^\delta ([r]_{\delta}) dW^{\delta}(r) \right)  dW^{\delta}(s)\>\Big]  +C(1+C_{R})\delta^{\frac 12},\\
\E [III_5]&\le  -2\E \Big [\int_0^{[t]_{\delta}}\<\int_{[s]_{\delta}}^s \bi u^{\delta}([r]_{\delta}) dW^{\delta}(r),\bi u([s]_{\delta})dW(s)\>\Big ] +C(1+C_{R})\delta^{\frac 12}.
\end{align*}

Due to the independent increments of $W$ and the property of conditional expectation, we obtain that   
\begin{align*}
&2\int_{0}^{[t]_{\delta}} \E\Big[\<\int_{[s]_{\delta}}^{s}\bi u([r]_{\delta}))dW(r),\bi u^{\delta}([s]_{\delta})dW^{\delta}(s)\>\Big]\\
&= 
2\sum_{k=0}^{\frac {[t]_{\delta}}\delta -1} \E \Big [ \int_{t_k}^{t_{k+1}}\<u(t_k)(W(s)-W(t_k)),u^{\delta}(t_k)(W(t_{k+1})-W(t_k))\>\delta^{-1} \Big] ds 
\\
&=2\sum_{k=0}^{\frac {[t]_{\delta}}\delta -1} \E \Big [ \int_{t_k}^{t_{k+1}}\frac {s-t_k} {\delta }\sum_{i=1}^N \<u(t_k),u^{\delta}(t_k)|q_i|^2\> \Big] ds \\
&=\int_{0}^{[t]_{\delta}} \E \Big[\<u^{\delta}([s]_{\delta}),\sum_{k=1}^N|q_k|^2u([s]_{\delta})\>\Big]ds.
\end{align*}
On the other hand, $\int_{[t]_{\delta}}^t \E \Big[\<u^{\delta}([s]_{\delta}),\sum_{i=1}^N|q_i|^2u([s]_{\delta})\>\Big]ds\le C \delta$ due to the mass conservation law and assumption on $q_i$.

Combining the above estimates, we obtain that 
\begin{align*}
 &\E\Big[\|u(t)-u^{\delta}(t)\|^2\Big]\\
 &\le \int_0^t 2L_f(R)\E\Big[\|u(s)-u^{\delta}(s)\|^2\Big] 
 +C(1+C_{R})\delta^{\frac 12} 
 +\int_0^t \E \Big[\<u^{\delta}(s),\sum_{i=1}^N|q_i|^2u(s)\>\Big]ds\\
 &-2\int_{0}^{[t]_{\delta}} \E\Big[\<\int_{[s]_{\delta}}^{s}\bi u([r]_{\delta}))dW(r),\bi u^{\delta}([s]_{\delta})dW^{\delta}(s)\>\Big]\\
 &\le 
 \int_0^t 2L_f(\|u(0)\|)\E\Big[\|u(s)-u^{\delta}(s)\|^2\Big] 
 +C(1+C_{R})\delta^{\frac 12} 
 +\int_0^t \E \Big[\<u^{\delta}(s),\sum_{i=1}^N|q_i|^2u(s)\>\Big]ds\\
 &-\int_{0}^{[t]_{\delta}} \E \Big[\<u^{\delta}([s]_{\delta}),\sum_{i=1}^N|q_i|^2u([s]_{\delta})\>\Big]ds.
\end{align*}

Applying the Gronwall's inequality and the continuity estimate of $u$ and $u^{\delta}$, we get 
\begin{align*}
\E[\|u(t)-u^{\delta}(t)\|^2]&\le C(1+C_{R})\exp(2L_f(\|u(0)\|)T)\delta^{\frac 12}.
\end{align*}
It follows that 
\begin{align*}
   &\mathbb P (\|u(t)-u^{\delta}(t)\|> \epsilon ) \\
  &\le 
  \mathbb P (\|u^{R_1}(t)-u(t)\|> \frac \epsilon 3)+  \mathbb P (\|u^{R_1,\delta}(t)-u^{\delta}(t)\|> \frac \epsilon 3 )\\
  &+ \mathbb P(\|u^{R_1}(t)-u^{R_1,\delta}(t)\|> \frac \epsilon 3, t\le \tau)  +\mathbb P(\|u^{R_1}(t)-u^{R_1,\delta}(t)\|> \frac \epsilon 3, t>\tau).
\end{align*}
Taking limit on $\delta\to 0$, $R,R_1 \to \infty$, using the strong convergence estimate and Chebyshev's inequality,  we obtain 
\begin{align*}
&\lim_{\delta \to 0}\mathbb P (\|u(t)-u^{\delta}(t)\|> \epsilon ) \\
&\le 
 \lim_{\delta \to 0}\frac {9}{\epsilon^2}C(1+C_{R})\exp(2L_f(\|u_0\|)T)\delta^{\frac 12} \\
 &
+ \lim_{R\to \infty}\mathbb P (\sup_{s\in [0,t]}\|u(s)\|\ge R)+\lim_{R\to \infty} \mathbb P (\sup_{s\in [0,t]}\|u^{\delta}([s]_{\delta})\|\ge R)
=0.
\end{align*}
Similarly, following the above arguments, we further obtain 
\begin{align*}
\lim_{\delta \to 0}\E[\sup_{t\in [0,T]} \|u(t)-u^{\delta}(t)\|^2]&=0,
\end{align*}
which implies that 
\begin{align*}
  &\lim_{\delta \to 0}\mathbb P (\sup_{t\in [0,T]}\|u(t)-u^{\delta}(t)\|> \epsilon)=0.
\end{align*}
    
\end{proof}

\begin{rk}
Similar to the stochastic Wasserstein Hamiltonian flow induced by classical Stochastic ODEs, one may expect the particle version of the stochastic nonlinear Schr\"odinger equation \eqref{snls}, that is,
\begin{align}\label{MV-sodes}
&dX_t=v(t,X_t),\;\\\nonumber
& d v(t,X_t)
=-\nabla_{X_t} \frac 12 \frac {\delta}{\delta \rho} I(\rho(t,X_t))+2\lambda \nabla_{X_t} f(\rho(t,X_t)) +2\nabla_{X_t} \circ dW(t).
\end{align}
But we have not found a rigorous way to prove it. This will be studied in the future. 
\end{rk}

\subsection{NLS equation with random dispersion}
The dimensionless NLS equation with random dispersion is given by 
\begin{align}\label{snls-rds}
d u=\bi \Delta u \frac 1{\epsilon}m(\frac t{\epsilon^2})dt+\bi \lambda f(|u|^2) u dt,
\end{align}
where $m$ is a real-valued centered stationary random process. Under ergodic assumptions on $m$, it is expected that the limiting model when $\epsilon\to 0$ is the following stochastic NLS equation with white noise dispersion
\begin{align}\label{snls-dis}
du=\sigma_0 \bi \Delta u\circ dB_t +\bi\lambda f(|u|^2) u dt,
\end{align} 
where $\sigma_0^2=2\int_0^{\infty}\E[m(0)m(t)]dt$ (see e.g. \cite{BD10}). For simplicity, we set $\sigma_0=1$ in \eqref{snls-dis} throughout this section. 

To see \eqref{snls-dis} as a stochastic Wasserstein Hamiltonian flow, 
let us use \eqref{snls-rds} instead of Wong--Zakai approximations. Assume that the real valued centered stationary process $m(t)$ is continuous and such that for any $T>0$, $t\mapsto \epsilon\int_0^{\frac t {\epsilon^2}}m(s)ds$ converges in distribution to a standard real-valued Brownian motion $B$ in $\mathcal C([0,T])$ (see e.g. \cite{BD10}).

First, using Madelung transform $u(t,x)=\sqrt{\rho(t,x)}e^{\bi S(t,x)}$ gives 
\begin{align*}
&e^{\frac 12 \log(\rho)+\bi S}(\frac 12 \frac {\partial_t \rho}{\rho}+\bi  {\partial_t S})\\
&=\bi e^{\frac 12 \log(\rho)+\bi S} \Big(\frac 12 \frac {\nabla \rho}{\rho}+\bi \nabla S)^2+(\frac 12 \frac {\Delta \rho}{\rho}+\bi \Delta S-\frac 12 |\frac {\nabla \rho}{\rho}|^2) \Big)\frac 1{\epsilon}m(\frac t{\epsilon^2})\\
&+\bi  e^{\frac 12 \log(\rho)+\bi S} \lambda f(\rho)\\
&=\bi e^{\frac 12 \log(\rho)+\bi S}\Big(\frac 14 (\frac {\nabla \rho}{\rho})^2-(\nabla S)^2+\bi  \frac {\nabla \rho}{\rho} \cdot \nabla S) +(\frac 12 \frac {\Delta \rho}{\rho}+\bi \Delta S-\frac 12 |\frac {\nabla \rho}{\rho}|^2)\Big)\frac 1{\epsilon}m(\frac t{\epsilon^2})\\
&+\bi  e^{\frac 12 \log(\rho)+\bi S} \lambda f(\rho).
\end{align*}
We obtain that 
\begin{align}\label{dis-wz}
\partial_t \rho&= -2 \nabla \cdot (\rho \nabla S)\frac 1{\epsilon}m(\frac t{\epsilon^2}),\\\nonumber
\partial_t S
&=(-|\nabla S|^2-\frac 14 \frac {\delta}{\delta \rho} I(\rho))\frac 1{\epsilon}m(\frac t{\epsilon^2})+\lambda f(\rho),
\end{align}
which can be rewritten as
\begin{align*}
\partial_t \rho&= - \nabla \cdot (\rho v) \frac 1{\epsilon}m(\frac t{\epsilon^2}),\\\nonumber
\partial_t v&= (-\nabla_x v\cdot v-\nabla_x \frac 12 \frac {\delta}{\delta \rho} I(\rho))\frac 1{\epsilon}m(\frac t{\epsilon^2})+2\lambda \nabla_x f(\rho).
\end{align*}

Based on the above calculations, following the similar steps in the proof of Proposition \ref{equ-var-nls}, we conclude the following result. 

\begin{prop}
The critical point of the variational problem 
\begin{align}\label{var-snls-2}
   I_{\epsilon}(\rho^0,\rho^T)=\inf\{\mathcal S(\rho_t,\Phi_t)| (-\Delta_{\rho_t})^{\dagger}\Phi_t \in \mathcal T_{\rho_t} \mathcal P_{o}(\mathcal M),\rho(0)=\rho^0, \rho(T)=\rho^T\}   
 \end{align}
whose action functional is given by the dual coordinates, 
\begin{align*}
\mathcal S(\rho_t,\Phi_t)&=-\int_0^T \<\Phi(t),\partial_t \rho_t\>dt +\int_0^T \mathcal H_0(\rho_t, \Phi_t) dt 
+\int_0^T \mathcal H_1(\rho_t,\Phi_t)\frac 1{\epsilon}m(\frac t{\epsilon^2})dt,
\end{align*}
satisfies \eqref{dis-wz}.  
Here $\mathcal H_0(\rho_t, \Phi_t)=-\lambda \int_{\mathcal M}\int_0^{\rho} f(s)ds dvol_{\mathcal M}$ with a smooth function $f$, $\mathcal H_1(\rho_t,\Phi_t)=\int_{\mathcal M} |\nabla \Phi_t|^2\rho_t d vol_{\mathcal M}+\frac 14 I(\rho)$, $I(\rho)=\int_{\mathcal M}|\nabla \log(\rho)|^2\rho d vol_{\mathcal M}.$
\end{prop}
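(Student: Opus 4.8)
The plan is to run the dual-coordinate variational computation of Theorem \ref{tm-sto-mix} verbatim, the only change being that the driver $d\xi_\delta$ is now the real-valued factor $\nu(t):=\frac1\epsilon m(\frac t{\epsilon^2})$. For a fixed $\epsilon>0$ and a fixed realization of $m$, the function $\nu$ is continuous, hence bounded on $[0,T]$, so $\mathcal S$ is an honest pathwise integral and no It\^o/Stratonovich distinction enters; every manipulation below is deterministic for fixed $\omega$. The computation \eqref{dis-wz} obtained from the Madelung transform just before the statement is exactly the system we must recover, so the task is purely to show it is the Euler--Lagrange system of $\mathcal S$.

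First I would record the variational derivatives of the two Hamiltonians. Since $\mathcal H_0(\rho_t,\Phi_t)=-\lambda\int_{\mathcal M}\int_0^{\rho}f(s)\,ds\,dvol_{\mathcal M}$ depends on $\rho$ alone,
\begin{align*}
\frac{\delta}{\delta\Phi_t}\mathcal H_0=0,\qquad
\frac{\delta}{\delta\rho_t}\mathcal H_0=-\lambda f(\rho_t).
\end{align*}
For $\mathcal H_1(\rho_t,\Phi_t)=\int_{\mathcal M}|\nabla\Phi_t|^2\rho_t\,dvol_{\mathcal M}+\tfrac14 I(\rho_t)$, integration by parts in space (using that $\mathcal M$ is boundaryless) yields
\begin{align*}
\frac{\delta}{\delta\Phi_t}\mathcal H_1=-2\nabla\cdot(\rho_t\nabla\Phi_t),\qquad
\frac{\delta}{\delta\rho_t}\mathcal H_1=|\nabla\Phi_t|^2+\tfrac14\frac{\delta}{\delta\rho_t}I(\rho_t).
\end{align*}

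Next I would perturb the pair by $(\rho_t+\theta h_t,\Phi_t+\theta k_t)$ with $h_0=h_T=0$ and $\int_{\mathcal M}h_t\,dvol_{\mathcal M}=0$, expand $\mathcal S$ to first order in $\theta$, and integrate by parts in time in the cross term, $-\int_0^T\<\Phi_t,\partial_t h_t\>dt=\int_0^T\<\partial_t\Phi_t,h_t\>dt$, where the boundary contribution vanishes by $h_0=h_T=0$. Collecting the coefficients of $k_t$ and of $h_t$ and setting each to zero gives, up to a spatially-constant shift coming from the zero-mean constraint on $h_t$ and the indeterminacy of $\Phi\in\mathscr F(\mathcal M)/\mathbb R$,
\begin{align*}
\partial_t\rho_t&=\frac{\delta}{\delta\Phi_t}\mathcal H_1\,\nu(t)=-2\nabla\cdot(\rho_t\nabla\Phi_t)\,\nu(t),\\
\partial_t\Phi_t&=-\frac{\delta}{\delta\rho_t}\mathcal H_0-\frac{\delta}{\delta\rho_t}\mathcal H_1\,\nu(t)=\lambda f(\rho_t)-\Big(|\nabla\Phi_t|^2+\tfrac14\frac{\delta}{\delta\rho_t}I(\rho_t)\Big)\nu(t).
\end{align*}
Writing $S=\Phi$ reproduces \eqref{dis-wz} exactly, matching the preceding Madelung computation.

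Finally, I expect the delicate points to be bookkeeping rather than analytic. The substantive checks are the $\Phi$-variation of the dual kinetic term $\int|\nabla\Phi|^2\rho$, which produces the continuity equation carrying the factor $\nu$, and carrying the Fisher-information variation $\frac{\delta}{\delta\rho}I(\rho)$ through the computation without needing its explicit form. One must also confirm that the time integration by parts is legitimate: it is, because $\nu$ multiplies only $\mathcal H_1$ and never the time derivative, so $\nu$ is never differentiated and its mere continuity suffices. The constraint $(-\Delta_{\rho_t})^{\dagger}\Phi_t\in\mathcal T_{\rho_t}\mathcal P_o(\mathcal M)$ and the spatial-constant shift are handled precisely as in Theorem \ref{tm-sto-mix} and Proposition \ref{equ-var-nls}; in particular the asymptotics $\epsilon\to0$ toward \eqref{snls-dis} is a separate diffusion-approximation issue and is not needed here.
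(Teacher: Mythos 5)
Your proposal is correct and follows essentially the same route the paper intends: the paper's "proof" is simply the remark that one combines the preceding Madelung computation yielding \eqref{dis-wz} with the dual-coordinate perturbation argument of Theorem \ref{tm-sto-mix} (via Proposition \ref{equ-var-nls}), and your computation of $\frac{\delta}{\delta\Phi}\mathcal H_1=-2\nabla\cdot(\rho\nabla\Phi)$, $\frac{\delta}{\delta\rho}\mathcal H_1=|\nabla\Phi|^2+\frac14\frac{\delta}{\delta\rho}I(\rho)$ and $\frac{\delta}{\delta\rho}\mathcal H_0=-\lambda f(\rho)$ fills in exactly those steps. Your observation that $\nu(t)=\frac1\epsilon m(t/\epsilon^2)$ is pathwise continuous, so no stochastic-integral subtlety arises, is also consistent with the paper's use of \eqref{snls-rds} in place of a Wong--Zakai approximation.
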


It has been shown in \cite{BD10} that the limit of \eqref{dis-wz} is the NLS equation with white noise dispersion. Therefore, \eqref{dis-wz} is also a stochastic Wasserstein Hamiltonian flow on density manifold.

\begin{rk}
The above system is also expected to have a particle version. By applying the push-forward map in section \ref{swhf} on $\widetilde \Omega$,  the particle version of \eqref{snls-dis} is expected to be 
\begin{align*}
&d{X_t}=v(t,X_t) \circ dB_t\;\\\nonumber
&dv(t,X_t)=-\nabla_{X_t} \frac 12 \frac {\delta}{\delta \rho} I(\rho(t,X_t))  \circ dB_t+2\lambda \nabla_{X_t} f(\rho(t,X_t)).
\end{align*}
And this will be studied in the future. 
\end{rk}

\subsection{Schr\"odinger Bridge Problem (SBP) with common noise}

In this part, we indicates that the critical point of the Schr\"odinger bridge problem (SBP) with common noise may  also be a stochastic Wasserstein Hamiltonian flow.   The SBP with common noise is inspired by 
\cite{MR3967062,wu2019viscosity} for the Schr\"{o}dinger Bridge type problem in stochastic case, where the common noise is added into the classical Schr\"{o}dinger Bridge type problem \cite{leonard2013survey,chen2020stochastic}. This problem can be  formulated as a stochastic control problem on Wasserstein manifold:
\begin{align}
   & \min_{\{v_t\}_{t\in [0,T]}}   \left[\int_0^T\int_{\mathbb{R}^d} \frac{1}{2}|v_t(x)|^2\rho_t(x,\omega)~dx~dt \right] \label{stochastic control on wasserstein mfld}\\
   & \textrm{Subject to:} ~ ~ \frac{\partial\rho_t(x,\omega)}{\partial t} +\nabla\cdot(\rho_t(x,\omega)(v_t+A(x,t)\dot{W}_t(\omega))) = \Delta \rho_t.\label{costraint of stochastic SBP}\\
   & \quad \quad \textrm{and}~~ \rho_0(\cdot,\omega)=\rho_a,~ \rho_T(\cdot,\omega) = \rho_b. \label{boundary condition of stochastic SBP}
\end{align} 
The continuity equation \eqref{costraint of stochastic SBP} can be viewed as an SDE on the Wasserstein manifold $\mathcal{P}_2(\mathbb{R}^d)$, which reads 
\begin{align*}
    dX_t=v(t,X_t)dt+\sqrt{2}dB(t)+A(t,X_t)dW(t). 
\end{align*}
Here $B$ is the Brownian motion which corresponding to the diffusion effect in \eqref{costraint of stochastic SBP}, and $W$ is another Brownian motion which is independent of $B$ and is called the common noise.

In the following, we consider the Wong--Zakai approximation of \eqref{stochastic control on wasserstein mfld}, i.e,
\begin{align}\label{WZ-SCP}
& \min_{\{v_t\}_{t\in [0,T]}}  \left[\int_0^T\int_{\mathbb{R}^d} \frac{1}{2}|v_t(x)|^2\rho_t(x,\omega)~dx~dt \right]\\\nonumber 
   & \textrm{Subject to:} ~ ~ \frac{\partial\rho_t(x,\omega)}{\partial t} +\nabla\cdot(\rho_t(x,\omega)(v_t+A(x,t)\dot \xi_{\delta}(t)) = \Delta \rho_t.\\\nonumber 
   & \quad \quad \textrm{and}~~ \rho_0(\cdot,\omega)=\rho_a,~ \rho_T(\cdot,\omega) = \rho_b, \nonumber 
\end{align} 
and show that its critical point is a Wasserstein Hamiltonian flow.

\begin{prop}\label{prop-sbp}
Assume that $W$ is $d$-dimensional Brownian motion, $\xi$ is the piecewisely linear Wong--Zakai approximation of $W.$ Let $A(\cdot, t)\in \mathcal C_b^1(\mathbb{R}^d), \rho_a,\rho_b\in \mathcal P_o(\mathbb{R}^d)$ be smooth. Then the critical point of \eqref{WZ-SCP} satisfies 
\begin{align}\label{sto-sbp}
&\partial_t \rho_t=\frac {\delta}{\delta \Phi} \mathcal H_0(\rho_t,\Phi_t)+\sum_{i=1}^d \frac {\delta}{\delta \Phi} \mathcal H_i(\rho_t,\Phi_t)(\dot \xi_{\delta})_i (t),\\\nonumber 
& \partial_t \Phi_t=-\frac {\delta}{\delta \rho} \mathcal H_0(\rho_t,\Phi_t)-\sum_{i=1}^d \frac {\delta}{\delta \rho} \mathcal H_i(\rho_t,\Phi_t)(\dot \xi_{\delta})_i (t),
\end{align}
where $\mathcal H_0(\rho,\Phi)=\frac 12 \int_{\mathcal M} |\nabla \Phi|^2\rho dvol_{\mathcal M}-\frac 18 I(\rho),$ $\mathcal H_i(\rho,\Phi)= \int_{\mathcal M}\rho A^i_t \partial_{x_i} \Phi dvol_{\mathcal M}.$ Here $A^i_t$ denotes the $i$-th column of the matrix $A_t$.

\end{prop}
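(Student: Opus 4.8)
Since $\xi_\delta$ is a Wong--Zakai approximation, $\dot\xi_\delta(t)$ is an honest function for each fixed $\omega$, so for the purpose of deriving the optimality system one may treat \eqref{WZ-SCP} as a deterministic constrained variational problem on the density manifold, exactly in the spirit of the proofs of Theorem~\ref{tm-sto-mix} and Proposition~\ref{equ-var-nls}. The plan is to introduce a Lagrange multiplier (co-state) $\Phi_t$ for the Fokker--Planck constraint in \eqref{WZ-SCP} and to form the action
\begin{align*}
\mathcal S(\rho,v,\Phi)=\int_0^T\int_{\mathbb R^d}\Big[\tfrac12|v_t|^2\rho_t+\Phi_t\big(\partial_t\rho_t+\nabla\cdot(\rho_t(v_t+A\dot\xi_\delta))-\Delta\rho_t\big)\Big]dx\,dt.
\end{align*}
After integrating by parts in $x$ and $t$ (the temporal boundary terms are fixed since $\rho_0,\rho_T$ are prescribed, the spatial ones vanish by decay), stationarity in $v_t$ gives the pointwise relation $v_t=\nabla\Phi_t$, and stationarity in $\Phi_t$ returns the constraint. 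Substituting $v_t=\nabla\Phi_t$ yields the continuity equation $\partial_t\rho_t+\nabla\cdot(\rho_t\nabla\Phi_t)+\nabla\cdot(\rho_t A\dot\xi_\delta)=\Delta\rho_t$, while stationarity in $\rho_t$ yields the Hamilton--Jacobi equation $\partial_t\Phi_t=-\tfrac12|\nabla\Phi_t|^2-\nabla\Phi_t\cdot A\dot\xi_\delta-\Delta\Phi_t$.

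The crucial step is to turn this viscous pair into the claimed system \eqref{sto-sbp}, in which no Laplacian of $\rho$ or $\Phi$ appears and instead the Fisher information $I(\rho)$ enters $\mathcal H_0$. I would do this by the logarithmic (Hopf--Cole type) change of variables $\Phi_t\mapsto \Phi_t-\nu\log\rho_t$, where $\nu$ is the diffusion normalization; using $\Delta\rho=\nabla\cdot(\rho\nabla\log\rho)$ this converts $\nabla\Phi$ into the current velocity and removes the $\Delta\rho$ term from the continuity equation, giving $\partial_t\rho_t+\nabla\cdot(\rho_t\nabla\Phi_t)+\nabla\cdot(\rho_t A\dot\xi_\delta)=0$, the first line of \eqref{sto-sbp}. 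Differentiating $\Phi_t-\nu\log\rho_t$ in time and using $\tfrac{\delta}{\delta\rho}I(\rho)=-2\Delta\log\rho-|\nabla\log\rho|^2$, the Laplacian terms recombine into a Fisher-information contribution, producing the second line with $\mathcal H_0(\rho,\Phi)=\tfrac12\int_{\mathbb R^d}|\nabla\Phi|^2\rho\,dx-\tfrac18 I(\rho)$. Conceptually this is the Benamou--Brenier--Schr\"odinger identity: splitting the control velocity into current plus osmotic parts replaces the diffusive cost by the Fisher energy plus a boundary entropy term.

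The common-noise part is then matched at the level of variational derivatives. With $\mathcal H_i(\rho,\Phi)=\int_{\mathbb R^d}\rho\,A^i_t\cdot\nabla\Phi\,dx$ one computes $\tfrac{\delta}{\delta\Phi}\mathcal H_i=-\nabla\cdot(\rho A^i_t)$ and $\tfrac{\delta}{\delta\rho}\mathcal H_i=A^i_t\cdot\nabla\Phi$, so that summing against $(\dot\xi_\delta)_i$ reproduces exactly the terms $\nabla\cdot(\rho_t A\dot\xi_\delta)$ in the continuity equation and $\nabla\Phi_t\cdot A\dot\xi_\delta$ in the Hamilton--Jacobi equation. Assembling the $\mathcal H_0$ contribution and $\sum_i\mathcal H_i(\dot\xi_\delta)_i$ then gives \eqref{sto-sbp} up to a spatially constant shift in $\Phi_t$, which is harmless since only $\nabla\Phi_t$ enters the dynamics.

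The main obstacle is the rigorous and quantitatively correct treatment of the diffusion-to-Fisher-information transformation. Three points require care. First, the change of variables forces one to track the diffusion normalization precisely, since the Fisher coefficient scales like $\nu^2/2$, so the value $\tfrac18$ in $\mathcal H_0$ is tied to the exact normalization of $\Delta\rho$ in the constraint. Second, because $A$ depends on $x$, the substitution generates an extra term proportional to $\nabla\cdot A^i_t$; one must verify that it is either reproduced by the $\mathcal H_i$ terms or absorbed into the potential, rather than spoiling the Hamiltonian form. Third, the transformation $\Phi\mapsto\Phi-\nu\log\rho$ and the quantities $\log\rho$, $I(\rho)$ only make sense where $\rho_t>0$ with enough regularity, so the argument should be carried out for smooth positive $\rho_a,\rho_b$ and, if necessary, localized up to a stopping time as in Proposition~\ref{path-con-rhos}. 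Justifying the integrations by parts and the existence of a smooth critical point on $\mathbb R^d$ with the requisite decay is the delicate analytic part.
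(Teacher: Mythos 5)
Your proposal follows essentially the same route as the paper's proof: introduce the Lagrange multiplier for the Fokker--Planck constraint to obtain the coupled viscous Hamilton--Jacobi/Kolmogorov pair, then apply the Hopf--Cole change of variables $\Phi_t=S_t-\tfrac12\log\rho_t$ to eliminate the Laplacians in favor of the Fisher information term $\tfrac18\,\tfrac{\delta}{\delta\rho}I(\rho)$, and finally identify the drift terms with the variational derivatives of $\mathcal H_0$ and $\mathcal H_i$. The caveat you raise about the extra $\tfrac12\nabla\cdot(A^i_t)(\dot\xi_\delta)_i$ term generated when $A$ depends on $x$ is a genuine subtlety that the paper's own computation passes over silently, so your attention to it is warranted rather than a defect of your argument.
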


\begin{proof}
By using the Lagrangian multiplier method, the critical point satisfies 
\begin{align}\label{f-pde}
&\partial_t \rho_t+\nabla\cdot(\rho(\nabla S_t+ A_t\dot\xi_{\delta}(t)))=\frac 12 \Delta \rho_t,
\\\label{b-pde}
& \partial_t S_t+\frac 12|\nabla S_t|^2+\nabla S_t \cdot A_t\dot \xi_{\delta}(t)=-\frac 12 \Delta S_t.
\end{align}
Applying the ``Hopf-Cole" transform (see e.g. \cite{leger2019hopfcole}) $\Phi_t=S_t-\frac 12 \log(\rho_t)$,  we obtain 
\begin{align*}
&\partial_t \rho_t +\nabla\cdot(\rho_t\nabla \Phi_t)+\nabla\cdot(\rho_t A_t \dot \xi_{\delta}(t))=0,\\
& \partial_t \Phi_t+\frac 12 |\nabla \Phi_t|^2+\nabla \Phi \cdot A_t \dot \xi_{\delta}(t)=\frac 18 \frac {\delta}{\delta \rho} I(\rho),
\end{align*}
which implies \eqref{sto-sbp}. 
\end{proof}

The above result also coincides with the generalized variational principle \eqref{gen-var-pri} with the action functional
\begin{align*}
\mathcal S(\rho_t,\Phi_t)&=-\int_0^T \< \Phi(t),\partial_t \rho_t\>dt+\int_0^T\mathcal H_0(\rho_t, \Phi_t) dt  +\sum_{i=1}^d\int_0^T \mathcal H_i(\rho_t,\Phi_t)d\xi_{\delta}(t),
\end{align*} 
whose critical point is 
the stochastic Hamiltonian system \eqref{sto-sbp}.
From the  proof of Proposition \ref{prop-sbp},  \eqref{sto-sbp} is equivalent to the forward and backward system which contains
the backward stochastic Hamilton-Jacobi equation \eqref{b-pde} and  a forward stochastic Kolmogorov equation \eqref{f-pde}, and plays the role of characteristics for the master equation \cite{MR3967062}.
The derivation of \eqref{sto-sbp}  may be extended to the mean-field game systems with common noise in \cite{MR3967062,MR3572323} up to an It\^o-Wentzell correction term \cite{MR2800911}. If the Wong--Zakai approximation \eqref{WZ-SCP} is convergent to \eqref{stochastic control on wasserstein mfld}, then the critical point of \eqref{stochastic control on wasserstein mfld} is expected to be a stochastic Wasserstein Hamiltonian flow. This will be our future research.

\section{Conclusions}
In this paper, we study the stochastic Wasserstein Hamiltonian flows, including the stochastic Euler--Lagrange equations and its Hamiltonian flows on density manifold. First, we show that the classical Hamiltonian motions with random perturbations and random initial data induce the stochastic Wasserstein Hamiltonian flows via Wong--Zakai approximation with Lagrangian formalism. Then we propose a generalized variational principle to derive and investigate the generalized stochastic Wasserstein Hamiltonian flows, including the stochastic nonlinear Schr\"odinger equation, Schr\"odinger equation with random dispersion and stochastic Schr\"odinger bridge problem. The study provides rigorous mathematical justification for the principle that the conditional probability density of stochastic Hamiltonian flow in sample space is stochastic Hamiltonian flow on density manifold.

\bibliographystyle{plain}
\bibliography{bib}

\section{Appendix}

\textbf{Proof of Lemma \ref{rd-won}}

\begin{proof}
The local existence of \eqref{lim-sode} and \eqref{inhs} is ensured thanks to the local Lipschitz condition of $f$ and $\sigma$. To obtain a global solution, a priori bound on $H_0(x,p)$ is needed. Denote the solutions of \eqref{inhs} and \eqref{lim-sode} with same initial condition $(x_0,p_0)$ by $(x_t^{\delta},p_t^{\delta}), \delta>0$ and $x_t^{0},p_t^{0}$, respectively. 
Applying the chain rule to $H_0(x_t^{\delta},p_t^{\delta})$ for \eqref{lim-sode} and \eqref{inhs}, we get that 
\begin{align*}
&H_0(x_t^{\delta},p_t^{\delta})=H_0(x_0,p_0)+\int_{0}^t \eta \nabla_p H_0(x_s^{\delta},p_s^{\delta})  \cdot \nabla_x \sigma(x_s) \dot \xi_{\delta}(s)ds\\
&H_0(x_t,p_t)=H_0(x_0,p_0)+\int_0^{\tau} \eta \nabla_p H_0(x_s,p_s) \cdot \nabla_x \sigma(x_s) dB_s\\
&\qquad \qquad \quad +\frac 12\int_0^\tau \eta^2 \nabla_{pp} H_0(x_s,p_s)\cdot (\nabla_x \sigma(x_s),\nabla \sigma(x_s))ds.
\end{align*}
By applying growth condition \eqref{grow-con} and taking expectation on the second equation, we derive that 
\begin{align*}
H_0(x_t^{\delta},p_t^{\delta})
&\le (H_0(x_0,p_0)+\eta C_1T)\exp(\int_0^tc_1\eta|\dot \xi_{\delta}(s)|ds),\\
\E\Big[H_0(x_t,p_t)\Big]
&\le (\E\Big[H_0(x_0,p_0)\Big]+\frac {\eta^2}2 C_1 T) \exp(\int_0^{\tau}c_1 \frac {\eta^2}2 ds).
\end{align*}
The first inequality leads to $H_0(x_t^{\delta},p_t^{\delta})<\infty$ since $\dot \xi_{\delta}(s)=\frac {B_{t_{k+1}}-B_{t_k}}{\delta},$ if $s\in [t_k,t_{k+1}].$
Furthermore, taking expectation on the first inequality, applying Fernique's theorem (see, e.g. \cite{ferniq}) for Gaussian variable and independent increments of $B_t$,
 we get  that
\begin{align*}
\E\Big[H_0(x_t^{\delta},p_t^{\delta})\Big]
&\le C(T,\eta,c_1)(2^{[\frac {t}{\delta}]}(\E\Big[H_0(x_0,p_0)\Big]+1),
\end{align*}
where $[w]$ is the integer part of the real number $w.$
The second inequality yield that $H_0(x_t,p_t)<\infty, a.s,$ and the global existence of the strong solution of \eqref{lim-sode}. Similarly, for $p\ge2$, we have that
\begin{align*}
\E\Big[H_0^p(x_t^{\delta},p_t^{\delta})\Big]
&\le C(T,\eta,c_1,C_1, p)2^{p[\frac {t}{\delta}]}(\E\Big[H_0^p(x_0,p_0)\Big]+1),\\
\E\Big[H_0^p(x_t,p_t)\Big]&\le C(T,\eta,c_1,p)(\E\Big[H_0^p(x_0,p_0)\Big]+1). 
\end{align*}
Furthermore, applying the above bounded moment estimate, we obtain that for $s\le t$,
\begin{align*}
\E\Big[|x(t)-x(s)|^{2p}+|p(t)-p(s)|^{2p}\Big]
&\le C(T,\eta,c_1,C_1,c_0,C_1,p,x_0,p_0) |t-s|^p\\
\E\Big[|x^{\delta}(t)-x^{\delta}(s)|^{2p}+|p(t)-p(s)|^{2p}\Big]
&\le C(T,\eta,c_1,C_1,c_0,C_1,p,x_0,p_0) 2^{[\frac {t}{\delta}]} |t-s|^p.
\end{align*}
However, the above estimate of $x^{\delta}$ is too rough and exponentially depending on $\frac 1{\delta}$. As a consequence, we can not expect any  convergence result. A delicate estimate of $(x^{\delta},p^{\delta})$ is needed.

Assume that $t\in [t_k,t_{k+1}],$ $t_k=k\delta.$ 
Then by using the expansion of \eqref{inhs}, we have that 
\begin{align*}
H_0(x_t^{\delta},p_t^{\delta})
&=H_0(x_0,p_0)-\sum_{j=0}^{k-1}\int_{t_j}^{t_{j+1}}\eta \nabla_p H_0(x_s^{\delta},p_s^{\delta})\cdot \nabla_x \sigma(x_s^{\delta}) d\xi_{\delta}(s)~  \\
&-\int_{t_k}^t \eta \nabla_p H_0(x_s^{\delta},p_s^{\delta})\cdot \nabla_x \sigma(x^{\delta}_s)  d\xi_{\delta}(s)\\
&=H_0(x_0,p_0)-\sum_{j=0}^{k-1}\int_{t_j}^{t_{j+1}}\eta \nabla_p H_0(x_{t_j}^{\delta},p_{t_j}^{\delta})\cdot \nabla_x \sigma(x^{\delta}_{t_j}) d\xi_{\delta}(s)\\
&-\int_{t_k}^t \eta \nabla_p H_0(x_{t_k}^{\delta},p_{t_k}^{\delta})\cdot \nabla_x \sigma(x^{\delta}_{t_k})  d\xi_{\delta}(s)\\
&-\sum_{j=0}^{k-1}\int_{t_j}^{t_{j+1}}\eta \Big(\int_{t_j}^s \nabla_{pp} H_0(x_{r}^{\delta},p_{r}^{\delta}) \cdot (\nabla_x \sigma(x^{\delta}_{r}),-\eta\nabla_x \sigma(x^{\delta}_r) \dot \xi_{\delta}(r)) dr \dot \xi_{\delta}(s) \\
&\quad + \int_{t_j}^s\nabla_{pp} H_0(x_{r}^{\delta},p_{r}^{\delta}) \cdot ( \nabla_x \sigma(x^{\delta}_{r}), -\frac 12(p_r^{\delta})^{\top }d_xg^{-1}(x)p_r^{\delta}-\nabla_x f(x^{\delta}_s)) dr \dot \xi_{\delta}(s)\\
&\quad+ \int_{t_j}^s\nabla_{p} H_0(x_{r}^{\delta},p_{r}^{\delta}) \cdot \nabla_{xx} \sigma(x^{\delta}_{r})g^{-1}(x^{\delta}_r)p^{\delta}_r dr \dot \xi_{\delta}(s)\\
&\quad+\int_{t_j}^s \nabla_{px} H_0(x_r^{\delta},p^{\delta})\cdot(\nabla_x \sigma(x_r^{\delta})\dot \xi_{\delta}(s), g^{-1}(x_r^{\delta})p^{\delta}_r)dr\Big)ds \\
&-\int_{t_k}^t\eta \Big(\int_{t_k}^s \nabla_{pp} H_0(x_{r}^{\delta},p_{r}^{\delta}) \cdot (\nabla_x \sigma(x^{\delta}_{r}),-\eta \nabla_x \sigma(x^{\delta}_r) \dot \xi_{\delta}(r)) dr \dot \xi_{\delta}(s) \\
&\quad + \int_{t_k}^s\nabla_{pp} H_0(x_{r}^{\delta},p_{r}^{\delta}) \cdot (\nabla_x \sigma(x^{\delta}_{r}), -\frac 12(p_r^{\delta})^{\top}d_xg^{-1}(x^{\delta}_r)p^{\delta}_r-\nabla_x f(x^{\delta}_s)) dr \dot \xi_{\delta}(s)\\
&\quad+ \int_{t_k}^s\nabla_{p} H_0(x_{r}^{\delta},p_{r}^{\delta}) \cdot \nabla_{xx} \sigma(x^{\delta}_{r})g^{-1}(x^{\delta_r})p^{\delta}_r dr \dot \xi_{\delta}(s)\\
&\quad + \int_{t_k}^{s} \nabla_{px} H_0(x_r^{\delta},p^{\delta})\cdot(\nabla_x \sigma(x_r^{\delta})\dot \xi_{\delta}(s)),g^{-1}(x_r^{\delta})p_r^{\delta})dr
\Big)ds\\
&=: H_0(x_0,p_0) +\sum_{j=0}^{k-1}I_{j}^1+I^1_k(t) \\
&+\sum_{j=0}^{k-1} (I_{j}^{21}+I_{j}^{22}+I_j^{23}+I_j^{24})+I_{k}^{21}(t)+I_{k}^{22}(t)+I_k^{23}(t)+I_k^{24}(t).
\end{align*}
 Making use of the growth condition \eqref{grow-con}, we have that 
\begin{align*}
&\sum_{j=0}^{k-1} (I_{j}^{21}+I_{j}^{22}+I_j^{23}+I_j^{24})+I_{k}^{21}(t)+I_{k}^{22}(t)+I_k^{23}(t)+I_k^{24}(t)\\
&\le \sum_{j=0}^{k-1}\int_{t_j}^{t_{j+1}}(C_1+c_1 H_0(x_s^{\delta},p_s^{\delta}))|\dot \xi_{\delta}(s)|^2\delta ds 
+\sum_{j=0}^{k-1} \int_{t_j}^{t_{j+1}} (C_1+c_1 H_0(x_s^{\delta},p_s^{\delta}))|\dot \xi_{\delta}(s)| \delta ds\\
&+\int_{t_k}^{t}(C_1+c_1 H_0(x_s^{\delta},p_s^{\delta}))|\dot \xi_{\delta}(s)|^2\delta ds 
+ \int_{t_k}^{t} (C_1+c_1 H_0(x_s^{\delta},p_s^{\delta}))|\dot \xi_{\delta}(s)| \delta ds\\
&=\int_{0}^{t}(C_1+c_1 H_0(x_s^{\delta},p_s^{\delta})) |\dot \xi_{\delta}(s)|^2\delta ds 
+ \int_{0}^{t} (C_1+c_1 H_0(x_s^{\delta},p_s^{\delta}))|\dot \xi_{\delta}(s)| \delta ds. 
\end{align*}
By using the Gronwall--Bellman inequality, we obtain that 
\begin{align*}
 H_0(x_t^{\delta},p_t^{\delta})&\le \exp(\int_0^t c_1(|\dot \xi_{\delta}(s)|^2 +|\dot \xi_{\delta}(s)|)\delta ds)(H_0(x_0,p_0)+CT+|\sum_{j=0}^{k-1}I_{j}^1+I^1_k(t)|).
\end{align*}
For simplicity, assume that $T=K\delta.$ Denote $[t]_{\delta}=t_k=k\delta$ if $t\in [t_k,t_{k+1}).$ The definition of $\xi_{\delta}(s)$ yields that $s\in [t_j,t_{j+1}]$
\begin{align*}
|\dot \xi_{\delta}(s)|^2\delta+|\dot \xi_{\delta}(s)|\delta =|\frac {B(t_{j+1})-B(t_j)}\delta|^2\delta +|B(t_{j+1})-B(t_j)|.
\end{align*}


Define a stopping time $\tau_R=\inf \{t\in [0,T]| \int_0^{[t]_{\delta}}|\dot \xi_{\delta}|^2 \delta ds \ge R\}.$ 
The stopping time is well-defined since the quadratic variation process of Brownian motion is bounded in $[0,T].$
Then taking $t\le \tau_{R}$ and using H\"older's inequality, then it holds that 
\begin{align}\label{con-gron}
    H_0(x_t^{\delta},p_t^{\delta})&\le \exp(\int_{[t]}^t c_1(|\dot \xi_{\delta}(s)|^2+|\dot \xi_{\delta}| ds)\exp(C(R+T))(H_0(x_0,p_0)+CT+|\sum_{j=0}^{k-1}I_{j}^1+I^1_k(t)|)\\\nonumber
    &\le \exp(\int_{[t]}^t c_1(\frac 32|\dot \xi_{\delta}(s)|^2) ds)\exp(C(R+T)) H_0(x_0,p_0)\\\nonumber
    &+ \exp(C(R+T)) \exp(\int_{[t]}^t c_1\frac 32|\dot \xi_{\delta}(s)|^2 ds)
    \left|\int_0^{[t]}-\eta \nabla_p H_0(x_{[s]_\delta}^{\delta},p_{[s]_{\delta}}^{\delta})\cdot \nabla_x \sigma(x_{[s]_{\delta}}) dB(s)\right|\nonumber\\\nonumber
    & + \exp(C(R+T)) \exp(\int_{[t]}^t (c_1\frac 32|\dot \xi_{\delta}(s)|^2 ds)
    \left|\int_{[t]}^t-\eta \nabla_p H_0(x_{[s]_\delta}^{\delta},p_{[s]_{\delta}}^{\delta})\cdot \nabla_x \sigma(x_{[s]_{\delta}}) \dot \xi_{\delta}(s) ds\right|. 
\end{align}

Similarly, one could obtain a analogous estimate of \eqref{con-gron}  with the integral over $[t_{k-1},t_{k}]$, where $t_k$, $k\le K$,  $t_K\le \tau_R.$ 
By the Cauchy inequality abd taking expectation on both sides of \eqref{con-gron},   applying the Burkholder--Davis--Gundy inequality (see e.g, \cite{burkholder1972integral}) and using the independent increments of Brownian motion, 
we get 
\begin{align*}
 &\E[H_0^2(x_{t_k}^{\delta},p_{t_k}^{\delta})] \\
 &\le 3\E\Big[\exp(\int_{t_{k-1}}^{t_k} (3c_1|\dot \xi_{\delta}(s)|^2 ds)\Big]\exp(2C(R+T)) \E\Big[H_0^2(x_0,p_0)\Big]\\
 &+ 3\exp(2C(R+T)) \E [\exp(\int_{t_{k-1}}^{t_k} 3c_1|\dot \xi_{\delta}(s)|^2 ds)]
    \E\Big[\Big|\int_0^{t_{k-1}}-\eta \nabla_p H_0(x_{[s]_\delta}^{\delta},p_{[s]_{\delta}}^{\delta})\cdot \nabla_x \sigma(x_{[s]_{\delta}}) dB(s)\Big|^2\Big]\\ 
&+  3\exp(2C(R+T)) \E\Big[\exp(\int_{t_{k-1}}^{t_k} 3c_1|\dot \xi_{\delta}(s)|^2 ds)  |B(t_{k})-B(t_{k-1})|^2 \\
&\times \big|\eta \nabla_p H_0(x_{t_{k-1}}^{\delta},p_{t_{k-1}}^{\delta})\cdot \nabla_x \sigma(x_{t_{k-1}}) \big|^2\Big]\\
&\le 3\E\Big[\exp(\int_{t_{k-1}}^{t_k} (3c_1|\dot \xi_{\delta}(s)|^2 ds)\Big]\exp(2C(R+T)) \E\Big[H_0^2(x_0,p_0)\Big]\\
 &+ 3\exp(2C(R+T)) \E [\exp(\int_{t_{k-1}}^{t_k} 3c_1|\dot \xi_{\delta}(s)|^2 ds)]
    \E\Big[\int_0^{t_{k-1}}(C_1+c_1 H_0(x_{[s]_{\delta}}^{\delta},p_{[s]_{\delta}}^{\delta}))^2 ds\Big]\\
&+  3\exp(2C(R+T)) \E\Big[\exp(\int_{t_{k-1}}^{t_k} 3c_1|\dot \xi_{\delta}(s)|^2 ds)  |B(t_{k})-B(t_{k-1})|^2\Big] \\
&\times \E\Big[(C_1+c_1 H_0^2(x_{t_{k-1}}^{\delta},p_{t_{k-1}}^{\delta}))\Big].
\end{align*}
 

Applying the Fernique theorem 
and choosing sufficient small $\delta$ such that $12c_1\delta<1,$
then we have that 
\begin{align*}
&\E [\exp(\int_{t_{k-1}}^{t_k} 3c_1|\dot \xi_{\delta}(s)|^2 ds)]\le C,\\
&\E\Big[\exp(\int_{t_{k-1}}^{t_k} 3c_1|\dot \xi_{\delta}(s)|^2 ds)  |B(t_{k})-B(t_{k-1})|^2\Big]\\
&\le  \sqrt{\E [\exp(\int_{t_{k-1}}^{t_k} 6c_1|\dot \xi_{\delta}(s)|^2 ds)]}\sqrt{\E \Big[|B(t_{k})-B(t_{k-1})|^4\Big]} \le C\delta.
\end{align*}
The above estimation gives 
\begin{align*}
    \E[H_0^2(x_{t_k}^{\delta},p_{t_k}^{\delta})]
    &\le 3 \exp(2C(R+T))C \E [H_0^2(x_0,p_0)]\\
    &+6 \exp(2C(R+T))C\int_{0}^{t_{k-1}}\E \Big[(C_1^2+c_1^2H_0^2(x_{[s]_{\delta}}^{\delta},p_{[s]_{\delta}}^{\delta}))\Big]ds\\
    &+6 \exp(2C(R+T)C\delta \E\Big[C_1^2+c_1^2H_0^2(x_{t_{k-1}}^{\delta},p_{t_{k-1}}^{\delta})\Big].
\end{align*}
Then the Grownall's inequality yield that 
\begin{align*}
\E[H_0^2(x_{t_k}^{\delta},p_{t_k}^{\delta})]
&\le \exp(6TCc_1^2\exp(2C(R+T))) \\
&\times \Big(3\exp(2C(R+T))C\E [H_0^2(x_0,p_0)] + 6C_1^2T C \exp(2C(R+T))\Big)
\end{align*}
Combining the above estimates with \eqref{con-gron} and  the Burkholder--Davis--Gundy inequality, we conclude that 
\begin{align*}
\sup_{t\in [0,\tau^R)}\E[H_0^2(x_{t}^{\delta},p_{t}^{\delta})] 
&\le (\exp(6TCc_1^2\exp(2C(R+T)))+C) \\
&\times \Big(3\exp(2C(R+T))C\E [H_0^2(x_0,p_0)] + 6C_1^2T C \exp(2C(R+T))\Big)\\
& =:C_R.
\end{align*}
Similarly, by choosing sufficient small $\delta$, we have that for $t\in [0,\tau^R)$,
\begin{align*}
\E[H_0^{p}(x_{t}^{\delta},p_{t}^{\delta})] 
\le C_{R,p}<\infty.
\end{align*}
As a consequence, by again using \eqref{con-gron}, we obtain that  
\begin{align*}
    \E\Big[\sup_{t\in [0,\tau^R)}H_0^p(x_{t}^{\delta},p_{t}^{\delta})\Big]
    &\le C_{R,p}<\infty.
\end{align*}


Next we show the convergence in probability of the solution of \eqref{inhs} to that of \eqref{lim-sode}.
Introduce another stopping time $\tau_{R_1}:=\inf\{t\in [0,T]| |x_t|+|p_t|\ge R_1, |x^{\delta}_{[t]_{\delta}}|+|p^{\delta}_{[t]_{\delta}}|\ge R_1 \}.$ Let $t\in [0,\tau_{R}\wedge \tau_{R_1}).$ By using the polynomial growth condition of $f,\sigma$ and the fact that $\sigma$ is independent of $p$, we obtain that
\begin{align*}
&|x^{\delta}(t)-x(t)|^2\\
&= |x^\delta(0)-x(0)|^2 +\int_{0}^t2\<x^{\delta}(s)-x(s),g^{-1}(x^{\delta}(s)) p^{\delta}(s)-g^{-1}(x(s))p(s)\>ds\\
&\le |x^{\delta}(0)-x(0)|^2+\int_0^t C_g(1+|p(s)|)(|x^{\delta}(s)-x(s)|^2+|p^{\delta}(s)-p(s)|^2) ds,\\
&|p^{\delta}(t)-p(t)|^2\\
&=
\int_0^t \<- (p^{\delta}(s))^{\top} d_x g^{-1}(x^\delta(s)) p^{\delta}(s) +  p(s)^{\top} d_x g^{-1}(x(s)) p(s) ,p^{\delta}(s)-p(s)\>ds
\\
&+\int_0^t2\<-\nabla_x f(x^{\delta}(s))+\nabla_x f(x(s)),p^{\delta}(s)-p(s)\>ds\\
&-\int_0^t2\eta \<p^{\delta}(s)-p(s), \nabla_x \sigma(x^{\delta}(s)) d\xi_{\delta}(s)-\nabla_x\sigma(x(s))dB_t\>\\
&\le C_g \int_0^t (1+|x^{\delta}(s)|)(|p^{\delta}(s)|^2+|p(s)|^2)(|p^{\delta}(s)-p(s)|^2+|x^{\delta}(s)-x(s)|^2) ds\\
&+ C_f \int_0^t (1+|x(s)|^{p_f}+|x^{\delta}|^{p_f})(|p^{\delta}(s)-p(s)|^2+|x^{\delta}(s)-x(s)|^2)ds\\
&-\int_0^t2\eta \<p^{\delta}(s)-p(s), \nabla_x \sigma(x^{\delta}(s)) d\xi_{\delta}(s)-\nabla_x\sigma(x(s))dB_s\>,
\end{align*}
where $C_g$ and  $C_f$ are constants depending on $f$ and $g$.
To deal with the last term, we split it as follows,
\begin{align*}
&\int_0^t2\eta \<p^{\delta}(s)-p(s), \nabla_x \sigma(x^{\delta}(s)) d\xi_{\delta}(s)-\nabla_x\sigma(x(s))dB_s\>\\
&=2\eta\int_0^t \<p^{\delta}([s]_{\delta})-p([s]_{\delta}), \nabla_x \sigma(x^{\delta}(s)) d\xi_{\delta}(s)-\nabla_x\sigma(x(s))dB_s\>\\
&+2\eta\int_0^t \<p^{\delta}(s)-p(s)-p^{\delta}([s]_{\delta})+p([s]_{\delta}), \nabla_x \sigma(x^{\delta}(s)) d\xi_{\delta}(s)-\nabla_x\sigma(x(s))dB_s\>\\
&=2\eta\int_0^t \<p^{\delta}([s]_{\delta})-p([s]_{\delta}), \nabla_x \sigma(x^{\delta}([s]_{\delta})) d\xi_{\delta}([s]_{\delta})-\nabla_x\sigma(x([s]_{\delta}))dB_s\> \\ 
&+2\eta\int_0^t \<p^{\delta}([s]_{\delta})-p([s]_{\delta}), (\nabla_x \sigma(x^{\delta}(s))- \nabla_x \sigma(x^{\delta}([s]_{\delta})))d\xi_{\delta}(s)-(\nabla_x\sigma(x(s))-\nabla_x\sigma(x([s]_{\delta})))dB_s\>\\
&+ 2\eta\int_0^t \<p^{\delta}(s)-p(s)-p^{\delta}([s]_{\delta})+p([s]_{\delta}), \nabla_x \sigma(x^{\delta}([s]_{\delta})) d\xi_{\delta}(s)-\nabla_x\sigma(x([s]_{\delta})dB_s\>\\
&+ 2\eta\int_0^t \<p^{\delta}(s)-p(s)-p^{\delta}([s]_{\delta})+p([s]_{\delta}), (\nabla_x \sigma(x^{\delta}(s)-\nabla_x \sigma(x^{\delta}([s]_{\delta})) ) d\xi_{\delta}(s)\\
&\quad -(\nabla_x\sigma(x(s)-\nabla_x\sigma(x([s]_{\delta}))dB_s\>\\
& =: II^1+II^2+II^3+II^4.
\end{align*}
Taking expectation on $II^1$, using the property of the discrete martingale,  the a prior estimates for $H_0(x_t,p_t)$ and $H_0(x_t^{\delta},p_t^{\delta})$ and H\"older's inequality,
we have that 
\begin{align*}
    \E[II^1]&=0,\\
\E[II^2]
&\le 2\eta\int_0^t \E \Big[\<p^{\delta}([s]_{\delta})-p([s]_{\delta}), \int_{[s]_{\delta}}^{s} (\nabla_{xx} \sigma(x^{\delta}(r))\cdot (g^{-1}(x^{\delta}(r))p^{\delta}(r)) dr d\xi_{\delta}(s)\>\Big]\\
& -2\eta\int_0^t \E \Big[\<p^{\delta}([s]_{\delta})-p([s]_{\delta}),\int_{[s]_{\delta}}^{s} (\nabla_{xx} \sigma(x(r))\cdot(g^{-1}(x^{\delta}(r)p^{\delta}(r)) dr dB_s\>\Big]\\
&\le C(R_1) \delta^{\frac 12}. 
\end{align*}
Similar arguments lead to $\E[II^4]\le C(R_1)\delta^{\frac 12}.$
For the term $II^3$, applying the continuity estimate of $x_t$ and $x^{\delta}_t,$ as well as independent increments of the Brownian motion, we get 
\begin{align*}
&\E[II^3]\\
&\le C(R_1)\delta^{\frac 12} +2\eta^2\E\Big[\int_0^{[t]_{\delta}} \<\int_{[s]_{\delta}}^s \nabla_x \sigma(x_{[r]_{\delta}}^{\delta}) d\xi_{\delta}(r) -\int_{[s]_{\delta}}^s \nabla_x \sigma(x_{[r]_{\delta}}) dB_r, \\
&\quad \nabla_x \sigma(x^{\delta}([s]_{\delta})) d\xi_{\delta}(s)-\nabla_x\sigma(x([s]_{\delta})dB_s\>\Big]\\
&\le C(R_1)\delta^{\frac 12}
+2\eta^2\E\Big[\int_0^{[t]_{\delta}} |\nabla_x \sigma(x^{\delta}_{[s]_{\delta}})|^2\frac{s-[s]_{\delta}}{\delta^2}(B([s]_{\delta}+\delta)-B([s]_{\delta}))^2ds\Big]\\
&-2\eta^2\E \Big[\int_0^{[t]_{\delta}} \<\nabla_x \sigma(x^{\delta}_{[s]_{\delta}}), \nabla_x \sigma(x_{[s]_{\delta}})\> \frac{s-[s]_{\delta}}{\delta^2}(B([s]_{\delta}+\delta)-B([s]_{\delta}))^2ds \Big]\\
&-2\eta^2 \E \Big[\int_0^{[t]_{\delta}} \<\nabla_x \sigma(x^{\delta}_{[s]_{\delta}}), \nabla_x \sigma(x_{[s]_{\delta}})\> \frac{ B([s]_{\delta}+\delta)-B([s]_{\delta})}{\delta} (B(s)-B([s]_{\delta}))ds\Big]\\
&+2\eta^2 \E \Big[\int_0^{[t]_{\delta}} \<\nabla_x \sigma(x_{[s]_{\delta}}), \nabla_x \sigma(x_{[s]_{\delta}})\> \frac{ B([s]_{\delta}+\delta)-B([s]_{\delta})}{\delta} (B(s)-B([s]_{\delta}))ds\Big]\\
&\le  C(R_1)\delta^{\frac 12}+2\eta^2 \int_0^{[t]_{\delta}}\E \Big[|\nabla_x \sigma(x_{[s]_{\delta}}^{\delta})-\nabla_x \sigma(x_{[s]_{\delta}})|^2\Big]ds\\
&\le C(R_1)\delta^{\frac 12}+ \int_0^{t}C(R_1)\E \Big[| x_s^{\delta}-x_s|^2\Big]ds,
\end{align*}
where $C(R_1)>0$ is monotone with $R_1.$
Combining the above estimates, we achieve that 
\begin{align*}
\E[|x^{\delta}(t)-x(t)|^2]&\le 
\int_0^t C_g(1+C_{R_1}) (\E[|x^{\delta}(s)-x(s)|^2]+ \E[|p^{\delta}(s)-p(s)|^2])ds \\
\E[|p^{\delta}(t)-p(t)|^2]
&\le \int_0^t (C_g+C_f)(1+C_{R_1})(\E[|x^{\delta}(s)-x(s)|^2]+ \E[|p^{\delta}(s)-p(s)|^2])ds
+C(R_1)\delta^{\frac 12}.
\end{align*}
Then the Gronwall's inequality implies that 
\begin{align}\label{rough-strong}
\E[|x^{\delta}(t)-x(t)|^2]+\E[|p^{\delta}(t)-p(t)|^2]
&\le \exp(2(C_g+C_f)(1+C_{R_1})T)C(R_1)\delta^{\frac 12}.
\end{align}
By making use of \eqref{rough-strong} and Chebshev's inequality, we conclude that 
\begin{align*}
&\mathbb P(|x^{\delta}(t)-x(t)|+|p^{\delta}(t)-x(t)|\ge \epsilon)\\
&\le   \mathbb P(\{|x^{\delta}(t)-x(t)|+|p^{\delta}(t)-x(t)|\ge \epsilon\}\cap \{t<\tau_R\}\cap \{t<\tau_{R_1}\})  \\
&+ \mathbb P(\{|x^{\delta}(t)-x(t)|+|p^{\delta}(t)-x(t)|\ge \epsilon\}\cap \{t\ge \tau_R\})\\
&+\mathbb P(\{|x^{\delta}(t)-x(t)|+|p^{\delta}(t)-x(t)|\ge \epsilon\}\cap \{t< \tau_R\}\cap \{t\ge \tau_{R_1}\})\\
&\le 2\frac {\E\Big[|x^{\delta}(t)-x(t)|^2+|p^{\delta}(t)-x(t)|^2\Big]}{\epsilon^2} \\
&+ \frac {\E\Big[\int_0^{t}|\dot \xi_{\delta}(s)|^2\delta ds\Big]}{R}+\frac {\E \Big[|x(t)|+|p(t)|+|x^{\delta}(t)|+|p^{\delta}(t)|\Big]}{R_1}\\
&\le \frac 2{\epsilon^2}\exp(2(C_g+C_f)(1+C_{R_1})T)C(R_1)\delta^{\frac 12}+ \frac C{R}+C\frac {1+C_R}{R_1}.
\end{align*}
Here, $\E [|x(t)|+|p(t)|+|x^\delta(t)|+|p^\delta(t)|]<C(1+C_R)$ is ensured by $\E[\sup\limits_{t\in [0,\tau^R)} H_0^{2}(x_{t}^{\delta},p_{t}^{\delta})] 
\le C_{R}.$
Taking limit on $\delta \to 0,$ $R_1\to \infty$, and $R \to \infty$ leads to 
\begin{align*}
   \lim_{\delta\to 0}\mathbb P(|x^{\delta}(t)-x(t)|+|p^{\delta}(t)-p(t)|> \epsilon)=0.
\end{align*}
Similarly, one could utilize the properties of martingale and obtain the following estimate, for large enough $q>0,$ 
\begin{align*}
\E[|x^{\delta}(t)-x(t)|^{q}]+\E[|p^{\delta}(t)-p(t)|^q]
&\le C_q \exp(C_q(C_g+C_f)(1+C_{R_1})T)C(R_1)\delta^{\frac q 2 -1}.
\end{align*}
This implies that for large enough $q>4$, 
\begin{align*}
&\E[\sup_{k\le K}\sup_{t\in[t_{k-1},t_{k}]}|x^{\delta}(t)-x(t)|^{q}]+\E[\sup_{k\le K}\sup_{t\in[t_{k-1},t_{k}]}|p^{\delta}(t)-p(t)|^q]\\
&\le \sum_{k=0}^{K-1}\E[\sup_{t\in[t_{k-1},t_{k}]}|x^{\delta}(t)-x(t)|^{q}]+\E[\sup_{t\in[t_{k-1},t_{k}]}|p^{\delta}(t)-p(t)|^q]\\
&\le C_q K \exp(C_q(C_g+C_f)(1+C_{R_1})T)C(R_1)\delta^{\frac q 2 -1}\\
&\le C_q  \exp(C_q(C_g+C_f)(1+C_{R_1})T)C(R_1)\delta^{\frac q 2 -2}.
\end{align*}
Combining the above estimate and applying the Chebshev's inequality, we further obtain 
\begin{align*}
 \lim_{\delta\to 0}\mathbb P(\sup_{t\in[0,T]}|x^{\delta}(t)-x(t)|+\sup_{t\in [0,T]}|p^{\delta}(t)-p(t)|> \epsilon)=0.
\end{align*}
\end{proof}

\end{document}